\documentclass[reqno,11pt]{amsart}
\usepackage{amsmath,amsfonts,amssymb,amsxtra,latexsym,amscd,enumerate,enumitem,amsthm,verbatim,here}
\usepackage{mathtools}
\usepackage{array}
\usepackage{booktabs}
\usepackage[table,dvipsnames]{xcolor}
\usepackage{cancel}

\usepackage[margin=1.2in]{geometry}
\numberwithin{equation}{section}

\usepackage{hyperref}
\definecolor{myrefred}{RGB}{180,0,0}
\definecolor{myciteblue}{RGB}{0,70,140} 
\hypersetup{
    colorlinks=true,
    pdfstartview=FitV,
    linkcolor=myrefred,      
    citecolor=myciteblue,    
    urlcolor=black
}
\definecolor{labelkey}{rgb}{0.6,0,0}

\newcommand{\pr}{\partial}

\newcommand{\R}{\mathbb{R}}
\newcommand{\N}{\mathbb{N}}

\newcommand{\Z}{\mathbb{Z}}
\newcommand{\la}{\langle}
\newcommand{\ra}{\rangle}

\newcommand{\lo}{{\mathrm{low}} }
\newcommand{\hi}{{\mathrm{high}} }

\newcommand{\hs}{{\mathrm{H}}}
\newcommand{\ks}{{\mathrm{K}}}
\newcommand{\sgm}{\sum_{b=0}^{n-1}t^{1-(b+1)\alpha}\Gamma_b}

\def\K{\mathcal{K}}

\def\H{\mathcal{H}}
\def\pr{{\partial}}

\def\nab{\nabla}
\def\les{\lesssim}

\numberwithin{equation}{section} 

\newtheorem{theorem}{Theorem}

\newtheorem{lemma}{Lemma}[section]

\newtheorem{proposition}[lemma]{Proposition}

\theoremstyle{remark}
\newtheorem{remark}[lemma]{Remark}

\begin{document}
\title[A common structure for modified scattering along  dispersive rays]{A common structure for modified scattering: Vlasov--Riesz, Hartree, and their coupling}
\author{Wenrui Huang}
\address{Brown University}
\email{wenrui\_huang@brown.edu}
\author{Mengyi Xie}
\address{Yale University}
\email{mengyi.xie@yale.edu}

\begin{abstract}
We identify a common action--density structure for the Hartree and Vlasov--Riesz equations and their coupling. Along free rays, the feedback between nonlinear actions and rescaled densities governs wave phase modulations and kinetic momentum translations. This structure persists under the exchange of sources, providing convergent modified profiles and an explicit recursive construction of the asymptotic corrections. We thereby establish small-data global existence and modified scattering in three spatial dimensions for the Hartree equation, the Vlasov--Riesz equation, and the coupled Vlasov--Hartree system, with interaction potential $\lambda|x|^{-\alpha}$, $\lambda\in\mathbb R$ and $0<\alpha\leq 1/2$. In this strongly long-range regime, the asymptotic action contains finitely many successive corrections beyond its leading term. To our knowledge, these are the first forward modified-scattering results for all three models throughout this range.
\end{abstract}

\maketitle

\setcounter{tocdepth}{1}
\tableofcontents


\section{Introduction}\label{sec:intro}
We study the long-time dynamics of three mean-field models in three spatial dimensions with interaction potential
\begin{equation}\label{eq:Riesz_potential}
    V_\alpha(x):=\lambda|x|^{-\alpha}, \qquad 0<\alpha\leq \frac{1}{2},\qquad \lambda\in \R. 
\end{equation}
The Vlasov--Riesz equation is
\begin{equation}\label{VR}
    \begin{split}
        \partial_t f+v\cdot \nabla_x f+F\cdot \nabla_vf =0,\qquad F=-\nabla_x \Psi, \\
        \Psi=V_\alpha *\rho, \qquad\rho(t,x)=\int_{\R^3} f^2(t,x,v)\,dv.\,\,\,\,\,
    \end{split}
\end{equation}
The corresponding Hartree equation is
\begin{equation}\label{Hartree}
    i\partial_t u+\frac{1}{2}\Delta u=\phi u, \qquad \phi=V_\alpha*|u|^2.
\end{equation}
We also consider the Vlasov--Hartree system
\begin{subequations}\label{VH}
\begin{align}
    &\partial_t f+v\cdot\nabla_x f+F\cdot\nabla_v f=0,
    \qquad F(t,x)=-\nabla_x\Psi(t,x), \qquad \Psi=V_\alpha*|u|^2,
    \label{eq:vlasov}\\
    &\,\,\,\,\,\,\,\,\, \,\,i\partial_tu+\frac{1}{2}\Delta_x u=\phi u, \qquad\phi=V_\alpha*\rho, \qquad \rho(t,x)=\int_{\mathbb{R}^3}f^2(t,x,v)\,dv. \label{eq:schrodinger}
\end{align}
\end{subequations}
Here $f=f(t,x,v)$ is real-valued, $u=u(t,x)$ is complex-valued, and $(x,v)\in\mathbb R^3\times\mathbb R^3$. All convolutions are in the spatial variable. Initial data are prescribed at $t=0$, with $f(0)=f_0$ and $u(0)=u_0$ for the components present in each model.

We use the square-root formulation of the kinetic equation \eqref{VR} and \eqref{eq:vlasov}: the physical phase-space density is $f^2$. Since $f$ and $f^2$ satisfy the same transport equation with the same force field, this formulation preserves the Vlasov dynamics and expresses the conserved mass as $\|f(t)\|_{L^2_{x,v}}^2$. It places the kinetic and wave components in a common $L^2$ setting, with spatial densities given by quadratic expressions in their respective amplitudes. 

The normalization $-\frac12\Delta_x$ makes the free Schr\"odinger Hamiltonian $|\xi|^2/2$ agree with the kinetic energy $|v|^2/2$. Thus frequency and velocity parametrize the same free rays $x=ty$. In \eqref{VR} and \eqref{Hartree}, each density generates its own interaction. In \eqref{VH}, the sources are exchanged: the bosonic density generates the force acting on the fermions, while the fermionic spatial density generates the potential acting on the bosons.

\medskip

The Vlasov--Riesz equation \eqref{VR} gives a kinetic description of a collisionless gas, whose particles evolve under the self-consistent force generated by their spatial density. The Hartree equation \eqref{Hartree} describes the mean-field quantum evolution of a Bose--Einstein condensate, with $u$ its macroscopic wave function and $|u|^2$ its spatial density.

In the Vlasov--Hartree system \eqref{VH}, $f^2$ describes a degenerate Fermi gas and $u$ a Bose--Einstein condensate. A coupling of this form was derived by Cárdenas, Miller, and Pavlović \cite{CMP25} through a mean-field approximation followed by a semiclassical limit for the fermions describing how a Bose-Einstein condensate interacts with a degenerate Fermi gas at zero temperature. 

\medskip

Our purpose is to establish forward small-data  modified scattering for all three models throughout the range \eqref{eq:Riesz_potential}, resolving a previously open problem for each of these models. The organizing observation is that their nonintegrable long-range dynamics admit a \emph{common description} in terms of \emph{an action} and \emph{a rescaled spatial density} along the free rays. The action produces a phase modulation for Hartree and a momentum translation for Vlasov--Riesz. The same equations determine the successive asymptotic corrections in both cases, and persist under the exchange of sources in Vlasov--Hartree.

\subsection{Long-range scattering and previous work} 
The long-range nature of these equations \eqref{VR}-\eqref{VH} comes from the interaction accumulated along the free evolution. Under the natural dispersive scaling, the spatial density has the form $t^{-3}m(t,x/t)$. The homogeneity of $V_\alpha$ then gives the corresponding potential of order $t^{-\alpha}$ on the spatial scale $x\sim t$. For Hartree, this potential accumulates in the \emph{phase} of the solution. For Vlasov, the force $F$ has size $t^{-1-\alpha}$, so particle velocities approach their limits with a correction of order $t^{-\alpha}$; this velocity correction accumulates in the positions of the particles. In both cases, the leading deviation from free evolution is therefore governed by 
\[
\int_1^t s^{-\alpha}\,ds.
\]
This integral diverges for $0<\alpha\leq1$, requiring a \emph{modification} of the free asymptotic dynamics. In particular, at the Coulomb exponent $\alpha=1$, it gives the familiar logarithmic correction. For $0<\alpha<1$, the leading term grows like $t^{1-\alpha}$.

\medskip

Modified scattering at the Coulomb exponent has a substantial history. For Hartree, Ginibre-Ozawa \cite{GO93} constructed modified wave operators, while Hayashi-Naumkin \cite{HN98} and Hayashi -Naumkin-Ozawa \cite{HNO98} established small-data modified scattering. Later proofs of small-data modified scattering use space-time resonances \cite{KP11}, wave packet testing \cite{VanHoose2024}, and a completely physical-space method \cite{PX}.

For Vlasov--Poisson, Choi-Kwon \cite{ChoiKwon2016} proved small-data modified scattering, and Ionescu, Pausader, Wang and Widmayer \cite{IPWW22} obtained explicit asymptotics using dispersive methods. Flynn, Ouyang, Pausader and Widmayer \cite{FOPW23} constructed modified wave operators and scattering maps. We further refer to \cite{PW21,PWY22,HPS24,BV26,HK26,KW25,SchlueTaylor2025,Big26,H26} for related advances and extensions in the asymptotic analysis of Vlasov-type systems.

\medskip

For $\alpha>1$, the potential along free rays is integrable in time; see \cite{NO92} for short-range Hartree scattering. For long-range interactions decaying more slowly than the Coulomb potential, the first correction leaves terms with time weight $t^{-2\alpha}$. These are integrable when $\alpha>1/2$. In this range, Hayashi-Naumkin \cite{HN01} obtained a modified-scattering formula for Hartree with an explicit power-law phase. Nakanishi \cite{Nak02b} developed a theory in which the initial data, asymptotic states, and convergence belong to the same weighted $L^2$ space for $1/2<\alpha<1$, complementing his Coulomb result \cite{Nak02a}.

Ginibre-Velo \cite{GV00I, GV00II} developed higher-order asymptotic expansions and constructed modified wave operators for Hartree throughout $0<\alpha\leq1$. Their analysis illustrates a central difficulty of this regime: corrections to the amplitude feed back into the phase, and the longer expansions required as $\alpha$ decreases demand increasingly precise control of spatial regularity. In the present work, we organize the forward dynamics through the coupled evolution of the \emph{action} and rescaled spatial \emph{density}, identifying this common structure and correction scheme for Hartree, Vlasov--Riesz, and Vlasov--Hartree.

On the kinetic side, Huang-Kwon \cite{HK24} established modified scattering and modified wave operators for $1-\delta<\alpha<1$. Hong-Pankavich \cite{HP26} extended forward modified scattering to the range $1/2<\alpha<1$ through a Lagrangian argument and proved small-data global existence for all $0<\alpha<1$. Their discussion of the remaining range $0<\alpha\leq1/2$ highlights a similar difficulty: controlling higher-order corrections requires sharper estimates on the spatial density and its derivatives.

\medskip

For Vlasov--Hartree, our earlier work \cite{HX} established modified scattering at the Coulomb exponent and for $1/2<\alpha<1$. Cavanagh \cite{Cav} later removed derivative assumptions on compactly supported fermionic data in the Coulomb case.

\medskip
In the range $0<\alpha\leq1/2$, forward small-data modified scattering remained open for all three models. The present work resolves this problem for all three models within a common action--density framework.

\medskip

Unlike the wave-operator problem \cite{GV00I, GV00II} where the asymptotic profile is prescribed, in the forward problem considered here, it must be recovered together with the necessary corrections from the evolving solution. For $0<\alpha\leq1/2$, the main difficulty lies in the feedback between the \emph{action} and the \emph{density} generating the field. The leading action produces a quadratic term of order $t^{-2\alpha}$, while the first correction to the rescaled spatial density changes the effective potential at the same order. Both contributions are nonintegrable and must be retained. Successive corrections require finer density asymptotics and corresponding derivative estimates, so optimal spatial decay alone does not determine the modified scattering behavior. We address this feedback through the coupled evolution of the \emph{action} and rescaled spatial \emph{density}, identifying a common structure and correction scheme for all three models.


\subsection{Main results}\label{subsec:main}
Following the approach of \cite{HX}, we describe transport and dispersion along the common free rays $x=ty$. The ray parameter $y$ plays the role of velocity for free transport and group velocity for the free Schr\"odinger evolution. We introduce the rescaled spatial densities
\begin{equation}\label{eq:m_rescaled}
    m_\hs(t,y):=t^3|u(t,ty)|^2,\qquad m_\ks(t,y):=t^3\rho(t,ty).
\end{equation}
In these coordinates, we construct a nonlinear action $\theta$ together with the solution. This action determines a phase modulation for the Hartree profile and a momentum translation for the kinetic profile. Its coupling with the rescaled spatial density has a common structure: for either component, the action and density satisfy equations of the form
\begin{subequations}\label{eq:action_density}
\begin{align}
    &\,\,\partial_t\theta_k-\frac{|\nabla_y\theta_k|^2}{2t^2} =t^{-\alpha}J(t)(V_\alpha*m_{j}), \label{eq:action}\\
    &\partial_t m_k-\frac{1}{t^2}\nabla_y \cdot(m_k\nabla_y\theta_k) =-\frac{1}{t^2}\nabla_y\cdot\mathcal J_k,\label{eq:density}
\end{align}
\end{subequations}
where the operator
\begin{equation}\label{eq:Jt}
    J(t):=(I-t^{-1}\Delta_y)^{-\frac{\alpha}{2}}=\beta_t(D), \qquad \beta_t(\xi):=(1+t^{-1}|\xi|^2)^{-\frac{\alpha}{2}}
\end{equation}
regularizes the potential, while $\mathcal{J}_k$ is the current associated with the corrected profile, with explicit formulas given in \eqref{eq:current}. Here, $k\in \{\hs,\ks\}$ labels the component whose profile is corrected by $\theta_k$, while $j\in \{\hs,\ks\}$ labels the source of its interaction field. The density $m_j$ drives the action $\theta_k$, whose gradient in turn governs the leading transport of $m_k$. For Hartree and Vlasov--Riesz, this feedback occurs within a single component, so $j=k$. For Vlasov--Hartree, it passes between the two components: the fermionic density $m_{\ks}$ drives the Hartree action $\theta_\hs$, while the bosonic density $m_\hs$ drives the kinetic action $\theta_\ks$. Thus, the same action--density structure accommodates both self-interaction and the exchange of sources. The model-dependent currents affect the limiting densities through the full evolution. Once these densities are fixed, the currents contribute only to the integrable action remainder, so the same recursion determines every nonintegrable correction.

\medskip

To describe the asymptotic corrections, set\footnote{$\left\lfloor {1}/{\alpha} \right\rfloor$ denotes the largest integer less than or equal to $1/\alpha$.}
\begin{equation}\label{eq:order}
    L:= \left\lfloor\frac{1}{\alpha}\right\rfloor-1, \qquad\text{so that }\,\, (L+1)\alpha\leq1<(L+2)\alpha,
\end{equation}
and define, for $\beta>0$ and $t\geq1$,
\begin{equation}\label{eq:time_factor}
    \Lambda_\beta(t):=\int_1^t s^{-\beta}\,ds
    =
    \begin{cases}
        \displaystyle\frac{t^{1-\beta}-1}{1-\beta},&\beta\neq1,\\[8pt]
        \log t,&\beta=1.
    \end{cases}
\end{equation}
The asymptotic action takes the form
\begin{equation}\label{eq:action_asymptotics}
    \Theta_k(t,y):=\sum_{\ell=0}^{L}\Lambda_{(\ell+1)\alpha}(t)\theta_{k,\ell}(y), \qquad k\in\{\hs,\ks\}.
\end{equation}
The coefficients $\theta_{k,\ell}$ are determined recursively by the limiting spatial densities $m_{k,\infty}$ through \eqref{eq:action_density}, starting with $\theta_{k,0}=V_\alpha*m_{j,\infty}$. Their explicit construction will be given in Section \ref{sec:asymptotics}. The choice of $L$ retains all nonintegrable time weights in the action equation so that the first omitted algebraic weight $t^{-(L+2)\alpha}$ is integrable. When $(L+1)\alpha=1$, the last term in \eqref{eq:action_asymptotics} is logarithmic; otherwise it is a power-law term. We later prove that $\theta_k-\Theta_k$ converges to a finite limit, which is absorbed into the scattering profile. 

\medskip
We also define the error function
\begin{equation}\label{eq:errorfunction}
 \mathfrak r_\alpha(t)=
 \begin{cases}
 t^{-\alpha}+t^{1-(L+2)\alpha},&(L+1)\alpha<1,\\
 t^{-\alpha}(1+\log t),&(L+1)\alpha=1.
 \end{cases}
\end{equation}
We note that $\mathfrak{r}_{\alpha}\rightarrow 0$ as $t\rightarrow \infty$ since $(L+2)\alpha>1$ and $\mathfrak{r}_{\alpha}\simeq t^{-\alpha}|1+\Lambda_{(L+1)\alpha}(t)|$.
\medskip

We now state the main results. We begin with the Hartree equation. The following theorem establishes global existence for small initial data and describes the asymptotic behavior through a finite expansion of the long-range phase.
\begin{theorem}\label{thm:Hartree}
Fix $0<\alpha\leq 1/2$. Let $N\in \N$ satisfy $ N\geq L+3$.  There exists a constant $\varepsilon_0>0$ depending only on $\alpha,\lambda,N$    such that the following holds. Suppose 
\begin{equation}\label{eq:initial-Hartree}
    \begin{split}
       \|u_0\|_{H^1}+\|\langle x\ra^N u_0\|_{L^2}\leq \varepsilon_0.
    \end{split}
\end{equation}
Then there exists a unique global solution $u\in C([0,\infty);H^{1}(\R^3))$ to \eqref{Hartree} with initial data $u(t=0)=u_0$. Moreover, there exist an asymptotic profile $h_{\infty}\in H^{1}\cap L^{\infty}$ and  an asymptotic action $\Theta_{\hs}(t,\cdot)\in W^{2,\infty}$ for fixed $t\geq 1$ defined in \eqref{eq:action_asymptotics} such that 
\begin{equation}\label{eq:asymptotic-hartree}
    \begin{split}
        \|u(t,x)-t^{-3/2}e^{i|x|^2/(2t)} e^{-i\Theta_{\hs}(t,x/t)}h_{\infty}\left(\frac{x}{t}\right)\|_{ L^{\infty}}\les \varepsilon_0 {\mathfrak{r}}_{\alpha}(t) t^{-3/2}.
    \end{split}
\end{equation}
\end{theorem}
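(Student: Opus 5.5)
We follow the ray-coordinate strategy of \cite{HX}. Factor out the free Schr\"odinger evolution using the pseudo-conformal (Dollard) decomposition
\[
u(t,x)=t^{-3/2}e^{i|x|^2/(2t)}\,w(t,x/t),\qquad w(t,y)=e^{-i\theta_\hs(t,y)}h(t,y).
\]
Here $w$ is the Fourier transform of the profile $e^{-it\Delta/2}u$, up to the harmless factor $M(t)=e^{i|x|^2/(2t)}$. In these variables \eqref{Hartree} becomes
\[
i\partial_t w+\tfrac{1}{2t^2}\Delta_y w=t^{-\alpha}(V_\alpha*m_\hs)\,w .
\]
We replace the potential by its regularized version $t^{-\alpha}J(t)(V_\alpha*m_\hs)$. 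The discrepancy between the exact potential $V_\alpha*|u|^2$ evaluated on rays and this regularized form costs a factor of $t^{-1}$ times derivatives of $m_\hs$. We then define the action $\theta_\hs$ by \eqref{eq:action} with $j=k=\hs$. The profile $h$ then satisfies a Schr\"odinger-type equation in which the nonintegrable potential has been removed, up to terms $t^{-2}\nabla\theta\cdot\nabla h$, $t^{-2}\Delta\theta\, h$ and $t^{-2}\Delta h$. The density equation \eqref{eq:density} follows from the usual current identity.

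\textbf{Step 1 (bootstrap).} Local well-posedness in $H^1$ and mass and energy conservation are standard. We run a bootstrap on $[1,T]$ for the norms
\[
\|\la x\ra^{a}e^{-it\Delta/2}u\|_{L^2}\lesssim\varepsilon_0 t^{\delta a},\ a\le N,\qquad \|h(t)\|_{L^\infty}\lesssim\varepsilon_0 ,\qquad \|\nabla_y^{b}\theta_\hs\|_{L^\infty}\lesssim \varepsilon_0^2\, t^{1-\alpha},\ b\le L+3 .
\]
Dispersion follows from the $L^\infty$ bound on $h$: $\|u\|_{L^\infty}\lesssim\varepsilon_0 t^{-3/2}$. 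The weighted norms are propagated using the vector field $x+it\nabla$ conjugated by the phase $e^{-i\theta}$. Commuting the weights against the nonlinearity produces the terms $t^{-1}\nabla^b\Delta\theta$. These are what limit how many weights we can afford, and they explain the requirement $N\ge L+3$.

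The $L^\infty$ bound on $h$ is obtained by integrating the $h$-equation in time. The term $t^{-2}\Delta h$ is controlled by interpolating $\la y\ra$-weighted $H^2$ norms of $h$, which yields decay $t^{-1-\gamma}$ for some $\gamma>0$. The terms involving $\theta$ carry the weight $t^{-2}t^{1-\alpha}=t^{-1-\alpha}$, which is integrable. Global existence then follows by continuity.

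\textbf{Step 2 (asymptotics of density and action).} From \eqref{eq:density} and the integrability of $t^{-2}\nabla\theta$, we get $m_\hs(t)\to m_\infty:=|h_\infty|^2$, with $\|m_\hs-m_\infty\|_{W^{k,\infty}}\lesssim\varepsilon_0 t^{-\alpha}$. This holds after subtracting the explicit transported corrections, which are themselves of orders $t^{-\ell\alpha}$ because $\nabla\theta\sim t^{1-\alpha}$.

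We insert the expansion of $m_\hs$ in powers $t^{-\ell\alpha}$, together with $J(t)=I+O(t^{-1}\Delta)$, into \eqref{eq:action}. Matching coefficients of $t^{-(\ell+1)\alpha}$ gives the recursion for $\theta_{\hs,\ell}$ defined in Section~\ref{sec:asymptotics}. The coefficient $\theta_{\hs,\ell}$ is a polynomial expression in $\nabla\theta_{\hs,\ell'}$ for $\ell'<\ell$ and in the density corrections. Every omitted term carries a weight of at most $t^{-(L+2)\alpha}$, or $t^{-1-\alpha}$ coming from $J-I$, and both are integrable by \eqref{eq:order}.

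Consequently $\theta_\hs-\Theta_\hs\to\theta_\infty$ in $L^\infty$, at the rate $\int_t^\infty s^{-(L+2)\alpha}ds\sim t^{1-(L+2)\alpha}$. In the case $(L+1)\alpha=1$, the last retained term is logarithmic, and the error in the leading coefficient, of relative size $t^{-\alpha}$, multiplies it, giving an error $t^{-\alpha}\log t$. This is exactly $\mathfrak r_\alpha$.

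\textbf{Step 3 (conclusion).} Set $h_\infty:=\lim_t e^{-i\theta_\infty}h(t)$. This limit exists in $L^\infty$ at rate $t^{-\alpha}$ by Step 1, and it lies in $H^1$ by the weighted bounds. The estimate \eqref{eq:asymptotic-hartree} then follows by combining the convergence of $h$ with that of $\theta_\hs-\Theta_\hs$.

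\textbf{Main obstacle.} The main difficulty is the feedback between action and density in Step 2. Each density correction must be computed to order $t^{-L\alpha}$, with derivative control, using only the available $N$ weights. Meanwhile, the growth $\nabla^b\theta\sim t^{1-\alpha}$ must not destroy the weighted energy estimates of Step 1. I would first close the bootstrap with the slow $t^{\delta}$ losses. I would then upgrade the decay of $m_\hs-m_\infty$ iteratively, one order $t^{-\alpha}$ at a time, each time using the already established form of the lower-order $\theta_{\hs,\ell}$.
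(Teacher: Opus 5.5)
Your overall architecture is the paper's: ray profile, regularized Hamilton--Jacobi action, corrected profile $h=e^{i\theta}g$, action--density recursion, and absorbing $\theta_{\mathrm{final}}$ into the profile. However, Step 1 does not close as you formulated it.

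\textbf{First gap: the bootstrap norm grows polynomially.} By the Galilean identity $\|G(t)^\mu u\|_{L^2}=\|\partial_y^\mu g\|_{L^2}$, your bootstrap quantity $\|\langle x\rangle^a e^{-it\Delta/2}u\|_{L^2}$ is a Sobolev norm of the \emph{unmodified} profile $g=e^{-i\theta}h$. We have $\nabla_y g=e^{-i\theta}(\nabla_y h-i\Gamma h)$ and $\Gamma=\nabla_y\theta\approx t^{1-\alpha}\Gamma_0$, with $\Gamma_0=\nabla_y(V_\alpha*m_\infty)/(1-\alpha)\not\equiv0$. Hence these norms grow like $\varepsilon_0(\varepsilon_0^2t^{1-\alpha})^a$, and the hypothesis $\lesssim\varepsilon_0t^{\delta a}$ is false for large $t$. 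Slow-loss arguments of this kind work at the Coulomb exponent only because the phase gradient there is $O(\log t)$. If by ``conjugated vector fields'' you mean $\|h(t)\|_{H^N_y}$, that is the right quantity, and then no loss is needed at all. In the $H^N$ energy identity for the $h$-equation, only commutators with $t^{-2}\mathcal T_\theta$ and the real potential survive. They are bounded by $\bigl(t^{-2}\|\nabla_y\Gamma\|_{H^N}+t^{-\alpha}\|\varphi_{\mathrm{high}}\|_{H^N}\bigr)\|h\|_{H^N}\lesssim\varepsilon^2t^{-1-\alpha}\|h\|_{H^N}$, which gives a uniform bound. Keeping a $t^{\delta}$ loss would in any case cost a factor $t^{C\delta}$ in the convergence rates, so you would not obtain the error $\varepsilon_0\mathfrak r_\alpha(t)$ exactly.

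\textbf{Second gap: the action bounds are two derivatives short.} The $H^N$ estimate for $h$ requires $\nabla_y\Gamma\in H^N$, i.e.\ $N+2$ derivatives of $\theta$, because of the term $\frac12(\nabla_y\cdot\Gamma)h$ in $\mathcal T_\theta$. You only bootstrap $\|\nabla^b\theta\|_{L^\infty}$ for $b\le L+3\le N$. Working in $L^\infty$ cannot recover this. The convolution bounds give $V_\alpha*m$ only two more derivatives than $m\in W^{s,1}\cap W^{s,\infty}$, and $h\in H^N$ only places $m=|h|^2$ in $W^{N-2,\infty}$, so at best $\theta\in W^{N,\infty}$. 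Lowering the number of weights, as you suggest, lowers the regularity of $m$, and hence of $\theta$, by the same amount, so the loss persists.

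The missing idea is an $L^2$-based bound on the top derivatives of the action. The operator $\nabla^2V_\alpha*$ has symbol of size $|\xi|^{\alpha-1}$, so $m\in L^1\cap H^N$ gives $\nabla_y^2\varphi\in H^{N+1-\alpha}$. A Kato--Ponce commutator estimate in the transport equation $\partial_t\nabla_y\Gamma-t^{-2}\Gamma\cdot\nabla_y^2\Gamma=t^{-2}(\nabla_y\Gamma)^2+t^{-\alpha}\nabla_y^2\varphi_{\mathrm{low}}$ then propagates $\|\nabla_y\Gamma\|_{H^{N+1-\alpha}}\lesssim\varepsilon^2t^{1-\alpha}$. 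With this, the profile bootstrap closes for every $N\ge4$. The condition $N\ge L+3$ comes instead from the expansion, which needs $\mathcal J\in W^{L,\infty}$ and $\varphi_{\mathrm{high}}\in W^{L+1,\infty}$ from $H^{N-1}$ and $H^N$ bounds via Sobolev embedding.

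Even then, the derivative count in Step 2 is tight. At the last order the density remainder has no derivatives left. The paper handles this by convolving the density equation with $V_\alpha$ before estimating, so that the divergence falls on the kernel.
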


\medskip
We next consider the Vlasov--Riesz equation. In this setting, the asymptotic action enters the scattering formula through its gradient, producing a translation of the kinetic profile in the free-ray variables.

\medskip

\begin{theorem}\label{thm:VR}
  Fix $0<\alpha\leq1/2$. Let $N\in \N$ satisfies $ N\geq L+3$. There exists a constant $\varepsilon_0>0$ depending only on $\alpha,\lambda,N$   such that the following holds. Suppose $f_0\in C^1_{x,v}$ and
\begin{equation}\label{eq:initial-VR}
    \begin{split}
        \sum_{|\mu|+|\nu|\leq N-1}
 \big\|\la x\ra^{2N+10}      \partial_x^\mu\partial_v^\nu f_0\big\|_{L^\infty\cap L^2}+\sum_{|\mu|+|\nu|= N}\big\|\la x\ra^{2N+10}       \partial_x^\mu\partial_v^\nu f_0\big\|_{L^2} \leq\varepsilon_0.
    \end{split}
\end{equation}
Then there exists a unique global solution $f\in C^1_{t,x,v}$ to \eqref{VR} with initial data $f(t=0)=f_0$. Moreover, there exist an asymptotic profile $f_{\infty}\in L^2_{x,v}\cap L_{x,v}^{\infty}$ and an asymptotic action $\Theta_{\ks}(t,\cdot)\in W^{2,\infty}$ for fixed $t\geq 1$ defined in \eqref{eq:action_asymptotics} such that
\begin{equation}\label{eq:asymptotic-VR}
    \begin{split}
   \left\|f(t,x,v)- f_\infty\left(\frac{x}{t},x-tv-\nabla_y\Theta_{\ks}\left(t,\frac{x}{t}\right)\right)\right\|_{L^2_{x,v}\cap L_{x,v}^{\infty}}\lesssim\varepsilon_0\mathfrak r_\alpha(t).
      \end{split}
\end{equation}
\end{theorem}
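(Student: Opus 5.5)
We would prove Theorem~\ref{thm:VR} by a bootstrap in the free-ray variables combined with the action--density system \eqref{eq:action_density} with $j=k=\ks$. Introduce the profile $g(t,y,w)$ through $f(t,x,v)=g\bigl(t,\frac{x}{t},x-tv-\nabla_y\theta_\ks(t,\frac{x}{t})\bigr)$, where $\theta_\ks$ solves \eqref{eq:action} driven by $m_\ks$. The defining property of $\theta_\ks$ is that its gradient absorbs the leading self-consistent force along rays $x=ty$. Indeed, $F(t,ty)=-t^{-1-\alpha}\nabla_y(V_\alpha*m_\ks)(t,y)$ by homogeneity. Subtracting the ray velocity together with the momentum translation $\nabla_y\theta_\ks$ cancels this force at leading order. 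The term $|\nabla\theta|^2/2t^2$ accounts for the $y$-shift induced by that translation. Local well-posedness in $C^1$ is standard, and global existence will follow from the bootstrap.

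The bootstrap hypotheses have three parts. First, weighted $L^2\cap L^\infty$ bounds on $\partial_y^\mu\partial_w^\nu g$ with $|\mu|+|\nu|\le N$, allowed to grow slowly like $t^{\delta}$ at top order. Second, bounds $\|\partial_y^\mu m_\ks(t)\|_{L^1\cap L^\infty}\lesssim\varepsilon$ for $|\mu|\le N-1$. Third, $\|\nabla_y^2\theta_\ks\|_{L^\infty}\lesssim\varepsilon\Lambda_\alpha(t)$, so that $t^{-2}|\nabla\theta|^2$ is harmless.

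The transport equation for $g$ then takes the form $\partial_t g=\mathcal R(t)\cdot\nabla_{y,w}g$. Here $\mathcal R$ consists of two kinds of terms. The first is the Taylor remainder of $F(t,x)$ around the ray, which is $O(t^{-2-\alpha}|w|)$ after using the $w$-weights. The second is the difference between $J(t)(V_\alpha*m)$ and the exact potential, which contributes $O(t^{-1-\alpha})$ from the $t^{-1}\Delta_y$ smoothing. Both are integrable, so energy estimates close the weighted bounds.

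The density equation \eqref{eq:density}, with current $\mathcal J_\ks=\int w\,g^2\,dw$ (up to the paper's normalization), gives $\partial_t m_\ks=O(t^{-2+\delta}\Lambda_\alpha)$ in $W^{N-2,\infty}$. Hence $m_\ks\to m_{\ks,\infty}$ at rate $t^{-1+}$, and $g\to f_\infty$ in $L^2\cap L^\infty$ at rate $t^{-\alpha}$.

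The main obstacle is the asymptotics of $\theta_\ks$. Writing $m_\ks=m_{\ks,\infty}+O(t^{-1+})$ alone is not enough. The feedback term $t^{-2}\nabla\cdot(m\nabla\theta)$ in \eqref{eq:density} produces density corrections of size $t^{-1}\Lambda_\alpha\sim t^{-\alpha}$, which re-enter \eqref{eq:action} at weight $t^{-2\alpha}$. This weight is not integrable for $\alpha\le1/2$.

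We therefore construct $\Theta_\ks$ recursively. Set $\theta_{\ks,0}=V_\alpha*m_{\ks,\infty}$, and define $m^{(\ell)}$ by integrating \eqref{eq:density} backward from $\infty$ with $\nabla\theta$ replaced by $\sum_{\ell'<\ell}\Lambda_{(\ell'+1)\alpha}\nabla\theta_{\ks,\ell'}$. Then collect the coefficients of $t^{-(\ell+1)\alpha}$ in the sum of $|\nabla\Theta|^2/2t^2$ and $t^{-\alpha}J(t)V_\alpha*m$. The operator $J(t)=I+O(t^{-1}\Delta)$ only contributes integrable $t^{-1-\alpha}$ errors. Each step of the recursion costs two $y$-derivatives, which is why $N\ge L+3$ is needed: the remaining $N-1-2L$ derivatives still control $W^{2,\infty}$ of all $\theta_{\ks,\ell}$.

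Next we show $\partial_t(\theta_\ks-\Theta_\ks)=O(t^{-(L+2)\alpha}+t^{-1-\alpha+})$, which is integrable by \eqref{eq:order}. Hence $\theta_\ks-\Theta_\ks\to\theta_\infty$, with tail $O(t^{1-(L+2)\alpha})$, or a log-loss version when $(L+1)\alpha=1$. The gradient of $\theta_\infty$ is absorbed into $f_\infty$ by a shift in $w$.

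Finally, combining the profile convergence, the tail bound on $\nabla_y(\theta_\ks-\Theta_\ks-\theta_\infty)$, and the Lipschitz bound on $f_\infty$ (from $\nabla_w g$ control) gives \eqref{eq:asymptotic-VR} with error $\mathfrak r_\alpha(t)$. The contribution from $t\le1$ is covered by local theory. Uniqueness is immediate in $C^1$ by the Lipschitz force.
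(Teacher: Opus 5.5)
Your overall architecture matches the paper's. You use a free-ray profile corrected by the momentum translation $\nabla_y\theta$, with $\theta$ solving the regularized Hamilton--Jacobi equation, followed by a bootstrap, convergence of the corrected profile, the recursive action--density expansion, and absorption of $\nabla_y\theta_{\mathrm{final}}$ into $f_\infty$.

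\textbf{The energy-estimate step fails.} The gap is the claim ``both are integrable, so energy estimates close the weighted bounds.'' Since $y=x/t$ is exact, there is no Taylor remainder of $F$. Your transport equation is \eqref{eq:PDE_h_K} with $w=-\eta$. Its momentum-dependent coefficients are the free term $t^{-2}w\cdot\nabla_y g$ and the mixed term $t^{-2}(\nabla_y\Gamma\,w)\cdot\nabla_w g$. The mixed term comes from the $y$-dependence of the translation and has size $\varepsilon^2t^{-1-\alpha}|w|$, not $t^{-2-\alpha}|w|$. The obstruction is momentum weights, not time integrability:
\begin{itemize}
\item A $y$-derivative of the mixed term produces $t^{-2}(\nabla_y^2\Gamma\,w)\cdot\nabla_w g$, so bounding $\langle w\rangle^k\partial_y g$ requires $\langle w\rangle^{k+1}\partial_w g$.
\item The free commutator turns $\partial_w$ back into $\partial_y$ with coefficient $t^{-2}$ and no gain of weight, so that in turn requires $\langle w\rangle^{k+1}\partial_y g$, and so on.
\end{itemize}
This cascade does not close with the finite weight $\langle x\rangle^{2N+10}$, and a $t^\delta$ allowance does not break it.

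Lemma~\ref{lem:propagation} is built for exactly this difficulty. It splits the data dyadically in initial momentum, $|\eta|\sim2^j$, and transports each piece by the full flow, which keeps $|\eta|\lesssim2^j$.
\begin{itemize}
\item For $t\ge2^j$ it uses weights $2^{j|\nu|}$ on momentum derivatives. Each factor $\eta$ is then absorbed by one extra weight, and the free commutator has coefficient $2^jt^{-2}$, whose time integral is at most $1$.
\item For $t\le2^j$ it uses the uncorrected equation, which has no momentum factor, with weights $t^{|\nu|}$. The resulting growth $t^{2N+3}\le2^{j(2N+3)}$ is paid for by the initial decay $2^{-j(2N+10)}$.
\end{itemize}

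\textbf{The action bootstrap is too weak.} It controls only $\nabla_y^2\theta$ in $L^\infty$. The order-$N$ profile estimate needs $\nabla_y\partial_y^\mu\Gamma\in L^2$ for $|\mu|=N$ and $\partial_y^\mu\Gamma\in L^\infty$ for $|\mu|\le N$. This is why the paper bootstraps $\|\nabla_y\Gamma\|_{H^{N+1-\alpha}}$, obtained from $m\in H^N$ by a Kato--Ponce energy estimate. It then uses $J(t)$ to upgrade this to $H^{N+1}$ at cost $t^{\alpha/2}$.

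\textbf{The asymptotic bookkeeping also needs fixing.}
\begin{itemize}
\item Your rate $m_\ks\to m_{\ks,\infty}$ at $t^{-1+}$ is an arithmetic slip. Indeed $t^{-2+\delta}\Lambda_\alpha\sim t^{-1-\alpha+\delta}$ integrates to $t^{-\alpha+\delta}$. This $t^{-\alpha}$ deviation, driven by the flux $m\Gamma$, is exactly the feedback that forces the recursion, as your next paragraph says.
\item More importantly, if each recursion step cost two derivatives, $N\ge L+3$ would not suffice. At $N=L+3$ you would have $N-1-2L=2-L<2$, and $L\ge1$ throughout $0<\alpha\le 1/2$.
\end{itemize}
In the paper each step costs only one derivative. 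The divergence in $m_n$ loses one, but $V_\alpha*$ maps $W^{s,1}\cap W^{s,\infty}$ into $W^{s+2,\infty}$ because $\nabla^2V_\alpha$ is locally integrable (Lemma~\ref{lem:pre_convolution}). This gives $m_n\in W^{L-n,1}\cap W^{L-n,\infty}$ and $\theta_n\in W^{L+2-n,\infty}$. At the last step the density remainder $r_{L-1}$ is controlled only in $L^1\cap L^\infty$. There one must convolve the remainder equation with $V_\alpha$ before estimating, so that the divergence falls on the kernel via \eqref{eq:nabla_V_convolution}.
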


The global well-posedness assertions in Theorems \ref{thm:Hartree} and \ref{thm:VR} follow from Ginibre–Velo \cite[Propositions 2.1 and 2.3]{GVGWP} and Hong–Pankavich \cite[Theorem 2.1]{HP26}, respectively. In the kinetic case, the latter result is applied to the phase-space density $f_0^2$. Our analysis below focuses on establishing the profile estimates and modified-scattering asymptotics.

\medskip

Finally, we consider the coupled Vlasov--Hartree system. The common action--density formulation allows us to treat the two components within the same framework and establish their simultaneous modified scattering in the presence of the coupling.

\medskip

\begin{theorem}\label{thm:VH}
Fix $0<\alpha\leq 1/2$. Let $N\in \N$ satisfies $ N\geq L+3$. There exists a constant $\varepsilon_0>0$ depending only on $\alpha,\lambda,N$ such that the following holds. Suppose 
\begin{equation}\label{eq:initial-Hartree_VH}
    \begin{split}
       \|u_0\|_{H^{N-1}}+\|\langle x\ra^N u_0\|_{L^2}\leq \varepsilon_0,
    \end{split}
\end{equation}
and \eqref{eq:initial-VR} hold. \\
\hspace*{\parindent} Then there exists a unique global solution $f\in C^1_{t,x,v}$ and $u\in C_tH_x^1$ to \eqref{VH} with initial data $u(t=0)=u_0$ and $f(t=0)=f_0$. Moreover, there exist asymptotic profiles $h_{\infty}\in L^{\infty}\cap H^{1}$, $f_{\infty}\in L_{x,v}^{\infty}\cap L^2_{x,v} $ and asymptotic actions $\Theta_\hs(t,\cdot),\Theta_\ks(t,\cdot) \in W^{2,\infty}$ for fixed $t\geq 1$  such that 
\begin{equation}\label{eq:asymptotic-VH}
    \begin{split}
       \|u(t,x)-t^{-3/2}e^{i|x|^2/(2t)} e^{-i\Theta_\hs(t,x/t)}h_{\infty}\left(\frac{x}{t}\right)\|_{ L^{\infty}}&\les \varepsilon_0 t^{-3/2} {\mathfrak{r}}_{\alpha}(t),\\[.8em]
      \left\| f(t,x,v)-f_{\infty}\left(\frac{x}{t},x-tv-\nabla_y\Theta_\ks\left(t,\frac{x}{t}\right)\right)\right\|_{ L^2_{x,v}\cap L_{x,v}^{\infty}}&\les \varepsilon_0\mathfrak{r}_{\alpha}(t).
    \end{split}
\end{equation}

\end{theorem}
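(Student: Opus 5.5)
We treat Theorem \ref{thm:VH} by a bootstrap on $[1,T]$ for the two profiles in free-ray variables, following \cite{HX}. Set $h(t,y):=e^{i\theta_\hs(t,y)}\,\overline{\mathcal M_t}$-type pullback of $u$, that is $u(t,x)=t^{-3/2}e^{i|x|^2/(2t)}e^{-i\theta_\hs(t,x/t)}h(t,x/t)$ up to the standard $e^{it\Delta/2}$ factorization. Set $g(t,x,v):=f(t,x+tv,v)$ corrected by the momentum translation $\nabla_y\theta_\ks$. The actions $\theta_\hs,\theta_\ks$ are defined by solving \eqref{eq:action} with exchanged sources ($\theta_\hs$ driven by $m_\ks$, $\theta_\ks$ by $m_\hs$). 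Local existence and global existence then come from a continuity argument once the bootstrap closes. The bootstrap norms are:
\begin{itemize}[label={}]
\item weighted energies $\|\langle y\rangle^{N}h\|_{L^2}+\|h\|_{H^{N-1}}$ and $\sum_{|\mu|+|\nu|\le N}\|\langle\cdot\rangle^{2N+10}\partial^\mu\partial^\nu g\|_{L^2}$, allowed to grow like $t^{\delta}$;
\item uniform bounds $\|h\|_{L^\infty}+\|g\|_{L^\infty}\lesssim\varepsilon$;
\item derivative bounds on $m_\hs,m_\ks$ in $W^{L+2,\infty}_y$, together with $\|\nabla_y^2\theta_k\|_{L^\infty}\lesssim\varepsilon\Lambda_\alpha(t)$.
\end{itemize}

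First, derive the profile equations. Writing the equations for $h$ and $g$, the action choice \eqref{eq:action} cancels the nonintegrable part of $\phi$ and $F$ along rays. The remaining terms are of two kinds: the transport term $t^{-2}\nabla\theta\cdot\nabla$, and the commutator $t^{-\alpha}(1-J(t))(V_\alpha*m)$ together with the dispersive error $t^{-1}\Delta_y$. The Fourier multiplier $1-\beta_t$ gains $t^{-1}|\xi|^2$, so it costs two $y$-derivatives of $m$. With $N\ge L+3$, energy estimates close with $t^{\delta}$ loss, using $\|\nabla^2\theta\|\lesssim\varepsilon t^{1-\alpha}$ times $t^{-2}$, which is integrable. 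The kinetic part follows \cite{HP26}. Commuting vector fields produce $\nabla_y^n\theta$, and these are controlled by the density bounds. The coupled terms are handled identically because \eqref{eq:action_density} is source-symmetric. Only the densities swap, and both obey the same bounds.

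Second, and this is the main obstacle, we obtain the density asymptotics and the recursive action expansion. From \eqref{eq:density} we get
\[
\partial_t m_k=O(t^{-2}\|\nabla(m\nabla\theta)\|)+t^{-2}\nabla\cdot\mathcal J_k .
\]
Since $\nabla\theta\sim t^{1-\alpha}$, this gives $m_k=m_{k,\infty}+\sum_\ell t^{-\ell\alpha}(\cdot)+$ an integrable remainder, with coefficients expressed through $\theta_{k,\ell'}$ for $\ell'<\ell$. Substituting into \eqref{eq:action}, we expand $|\nabla\theta|^2/(2t^2)$ and $J(t)=I+O(t^{-1})$, and match powers $t^{-(\ell+1)\alpha}$ for $\ell\le L$. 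This produces $\theta_{k,\ell}$ recursively, as in \eqref{eq:action_asymptotics}, with $\theta_{k,0}=V_\alpha*m_{j,\infty}$. The first omitted weight $t^{-(L+2)\alpha}$ is integrable, so $\theta_k-\Theta_k$ converges. The difficulty is that each step costs derivatives of $m_{j}$ and of $\theta$. I would run an induction on $\ell$, proving $\|\nabla^{n}(m_k(t)-\text{partial sum})\|_{L^\infty}\lesssim\varepsilon t^{-(\ell+1)\alpha}\Lambda$-type bounds for $n\le N-1-\ell$. The currents $\mathcal J_k$ are shown to decay at an extra $t^{-1}$ rate, so they enter only the integrable remainder. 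For the Hartree current this uses $\nabla h$; for the kinetic current it uses velocity moments of $\nabla_v g$. The condition $N\ge L+3$ is exactly what leaves two derivatives to spare at $\ell=L$.

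Finally, convergence. The profile equations have integrable right-hand sides once $\theta_k$ is replaced by $\Theta_k+\theta_{k,\infty}$. This yields $h(t)\to\tilde h_\infty$ and $g\to\tilde f_\infty$ in $L^2\cap L^\infty$ at rate $\varepsilon\mathfrak r_\alpha(t)$. The rate comes from the tails $t^{-\alpha}$ (the dispersive and $J$ error) and $\int_t^\infty s^{-(L+2)\alpha}\,ds$, or the logarithm when $(L+1)\alpha=1$. We absorb $e^{-i\theta_{\hs,\infty}}$ into $h_\infty$ and the shift $\nabla\theta_{\ks,\infty}$ into $f_\infty$. The translation error in $f_\infty(y,\cdot)$ is controlled by $\|\nabla_v f_\infty\|\cdot|\nabla(\theta-\Theta-\theta_\infty)|$. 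Together these give \eqref{eq:asymptotic-VH}.
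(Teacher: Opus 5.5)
\textbf{Verdict: genuine gap, in the kinetic bootstrap.} Your overall architecture is the paper's: ray profiles, a regularized Hamilton--Jacobi action with exchanged sources, the continuity equation for $m_k$, the recursion matching powers $t^{-(\ell+1)\alpha}$, and absorption of $\theta_{k,\mathrm{final}}$ as a phase or momentum shift. The gap is the kinetic step, which you dispose of with ``follows Hong--Pankavich; commuting vector fields produce $\nabla_y^n\theta$.''

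\textbf{Why the kinetic bootstrap does not close.} The obstruction is not the size of $\nabla^n\theta_\ks$ but the factor $\eta$ created by the momentum translation.
\begin{enumerate}
\item Since $\mathcal K_\ks=t^{-2}(\tfrac12|\eta|^2-\Gamma_\ks\cdot\eta)+t^{-\alpha}\varphi_{\ks,\hi}$, each $y$-derivative produces $\{\partial_y^{\mu-e_i}h_\ks,\,-t^{-2}\partial_{y_i}\Gamma_\ks\cdot\eta\}$. Its worst part has size $\varepsilon^2t^{-1-\alpha}|\eta|\,|\nabla_\eta\partial_y^{\mu-e_i}h_\ks|$.
\item Meanwhile the free part turns $\partial_\eta$ back into $\partial_y$ with coefficient $t^{-2}$. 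So the loss is a power of $|\eta|$, not of $t$, and no fixed weight hierarchy in $\eta$ absorbs it.
\item Gr\"onwall from $t=1$ only gives factors like $\exp(C\varepsilon^2|\eta|)$. Equivalently, with weights $|\eta|^{|\nu|}$ on $\partial_\eta^\nu$, the free coefficient becomes $|\eta|t^{-2}$, whose integral from $1$ is $\sim|\eta|$.
\item Switching to the uncorrected profile does not help, because its $y$-derivatives grow like $|\nabla\Gamma_\ks|\sim t^{1-\alpha}$.
\end{enumerate}
Heuristically, a particle with $|\eta|\gg t$ has not reached its asymptotic ray, so $\nabla\Gamma_\ks$ does not cancel the accumulated force gradient. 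Derivatives then pick up factors of order $\min\{t,|\eta|\}^{1-\alpha}$. Your norm $\sum_{|\mu|+|\nu|\le N}\|\langle\cdot\rangle^{2N+10}\partial^\mu\partial^\nu g\|_{L^2}$ should therefore be expected to grow polynomially in $t$, not like $t^\delta$. Hong--Pankavich does not cover this: their scattering analysis is for $1/2<\alpha<1$, and their force is self-generated rather than driven by $|u|^2$.

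\textbf{The idea you are missing.} The paper localizes the momentum dyadically at $t=1$, writing $h_\ks=\sum_j h_{\ks,j}$ with support $|\eta|\lesssim 2^j$ propagated in time.
\begin{itemize}
\item On $[1,2^j]$ it uses the uncorrected equation with weights $t^{|\nu|}$. It accepts growth $t^{2N+3}$, which is absorbed by the initial factor $2^{-j(2N+10)}$.
\item On $[2^j,\infty)$ it uses the corrected equation with weights $2^{j|\nu|}$. There $\int_{2^j}^\infty 2^js^{-2}\,ds\le 1$ uniformly in $j$.
\end{itemize}
This trades momentum weight for \emph{uniform} bounds: only $\langle\eta\rangle^4$ in $L^\infty$ and $\langle\eta\rangle$ in $L^2$ survive. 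Uniformity is what yields $\|m_\ks\|_{H^N}\lesssim\varepsilon_0^2$, the $t^{1-\alpha}$ action bounds, and the exact rate $\mathfrak r_\alpha$. A $t^\delta$ loss would degrade $t^{-\alpha}$ to $t^{-\alpha+\delta}$.

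\textbf{Secondary points.}
\begin{itemize}
\item In the coupled system the two densities do \emph{not} obey the same bounds. $m_\hs=|h_\hs|^2$ with $h_\hs\in H^N$ lies only in $W^{N-2,\infty}$, so your $W^{L+2,\infty}=W^{N-1,\infty}$ bootstrap for $m_\hs$ cannot close at $N=L+3$, still less from $\|h\|_{H^{N-1}}$. The scalar bound \eqref{eq:phi-high-oo} is then unavailable. The paper lowers the kinetic $L^\infty$ range to $N-2$ and handles the $|\mu'|=N-1$ force terms through $L^2_y$ field bounds and the momentum support.
\item Your density-remainder induction must be in $W^{k,1}\cap W^{k,\infty}$, not $L^\infty$ alone. $\partial^\beta V_\alpha$ with $|\beta|\le 2$ decays only like $|y|^{-\alpha-|\beta|}$, so $L^\infty$ control of $r_n$ does not bound $V_\alpha*r_n$. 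At the last order the paper also convolves before estimating.
\item The dispersive term in ray variables is $\frac{1}{2t^2}\Delta_y$, not $t^{-1}\Delta_y$. It is skew-adjoint, so it drops out of the energy estimates and costs a derivative only in the convergence step, where a difference estimate gives $t^{-1/2}$. With a $t^{-1}$ weight your convergence step would not even be integrable.
\item The hypothesis $u_0\in H^{N-1}$ is used only to pass from $t=0$ to $t=1$, because $f$ is driven by the physical density $|u|^2$. It is not a global bootstrap norm. The Hartree profile norms at $t=1$ come from the Galilean fields $G(t)^\mu u$, not from $y$-weights on $h$.
\end{itemize}
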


\begin{remark}\label{rem:regularity_VH}
The additional assumption $u_0\in H^{N-1}$ in Theorem \ref{thm:VH}, which is absent from Theorem \ref{thm:Hartree}, is needed only on the time interval $[0,1]$. For the Hartree equation, the small-time argument in Subsection \ref{subsec:boot_hartree} is carried out in the ray variable and uses only the Galilean norms $\|G(t)^\mu u(t)\|_{L^2}=\|\partial_y^\mu g(t)\|_{L^2}$, which as $t\to0$ reduce to the spatial weights in \eqref{eq:initial-Hartree} and involve no derivatives of $u_0$. In the coupled system, however, the kinetic component is driven by the physical density $|u|^2$. 
\end{remark}


\subsection{Organization}
The proof proceeds in two stages. We first construct the nonlinear actions together with the solutions and establish global control and convergence of the corrected profiles. Keeping the full nonlinear action at this stage incorporates all long-range orders into the bootstrap argument. We then use the limiting spatial densities to identify the finite action expansions and obtain the explicit scattering formulas.

After introducing the notation and preliminary estimates in Section \ref{sec:pre}, we explain the common structure in Section \ref{sec:idea}. We describe how the free-ray variables relate the kinetic and dispersive profiles, identify their common action--density evolution, and explain how this evolution generates the successive asymptotic corrections. This discussion provides the guiding ideas for the estimates and constructions that follow.

Section \ref{sec:boot} carries out the first stage of the proof. We begin with potential estimates that depend only on bounds for the rescaled density and can therefore be used for \emph{both} models. We then close the Hartree and Vlasov--Riesz bootstraps and establish convergence of their corrected profiles in separate subsections. Section \ref{sec:asymptotics} identifies the asymptotic actions: we first present the common recursive construction, and then establish the estimates needed to justify the expansions and control their remainders for each model. 

Finally, Section \ref{sec:VH} treats Vlasov--Hartree by combining the preceding estimates and expansions with exchanged sources. Each density drives the action of the other component, and the bootstrap and asymptotic construction close simultaneously. The coupled result thus illustrates how the common structure extends from the individual equations to their interaction.


\subsection*{Acknowledgment}
W. Huang thanks the warm hospitality of the Department of Mathematics at Yale University, where a substantial portion of this work was carried out.


\section{Notation and preliminaries}\label{sec:pre}

\subsection{Notation and function spaces}
We use the following notation. 
\begin{enumerate}
\item We write
\[
\langle x\rangle=(1+|x|^2)^{1/2},\qquad
\langle x,v\rangle=(1+|x|^2+|v|^2)^{1/2}.
\]
For a multi-index $\mu=(\mu_1,\ldots,\mu_d)\in\mathbb N_0^d$,
we write $|\mu|=\mu_1+\cdots+\mu_d$ and
$\partial_z^\mu=\partial_{z_1}^{\mu_1}\cdots\partial_{z_d}^{\mu_d}$.
When the variables are clear, we write $\partial^\mu$. For multi-indices $\nu\leq\mu$, where the inequality is understood componentwisely, we set
\[
\binom{\mu}{\nu}:=\prod_{j=1}^d\binom{\mu_j}{\nu_j}.
\]

\item
We use the unitary Fourier transform
\[
\widehat f(\xi)=(2\pi)^{-\frac{d}{2}} \int_{\R^d}e^{-ix\cdot\xi}f(x)\,dx.
\]
We use the standard Sobolev spaces $H^s(\mathbb R^d)$, with norm
\[
\|f\|_{H^s(\R^d)}
:=\left\|\langle\xi\rangle^s\widehat f(\xi)\right\|_{L^2_\xi},
\qquad \langle\xi\rangle:=(1+|\xi|^2)^{1/2}.
\]
For a nonnegative integer $N$, this norm is equivalent to
\[
\|f\|_{H^N(\mathbb R^d)}\simeq
\left(\sum_{|\mu|\leq N}
\|\partial_x^\mu f\|_{L^2(\mathbb R^d)}^2\right)^{1/2}.
\]
For a nonnegative integer $k$ and $1\leq p\leq\infty$, the space
$W^{k,p}(\mathbb R^d)$ consists of functions whose weak derivatives of
order at most $k$ belong to $L^p(\mathbb R^d)$. We use the norm
\[
\|f\|_{W^{k,p}(\mathbb R^d)}
:=\sum_{|\mu|\leq k}\|\partial_x^\mu f\|_{L^p(\mathbb R^d)}.
\]
Throughout, the norm on an intersection space is the sum of the two norms.

\item Unless otherwise specified, time is treated as a parameter, and
all norms and derivatives involve all the remaining variables of the
function. We omit the underlying domain and variable subscripts when
they are clear from the context. Thus, for $f=f(t,x,v)$ and
$\phi=\phi(t,x)$,
\[
\|f(t)\|_{H^N}
=\|f(t,\cdot,\cdot)\|_{H^N(\mathbb R^3_x\times\mathbb R^3_v)},
\qquad
\|\phi(t)\|_{H^N}=\|\phi(t,\cdot)\|_{H^N(\mathbb R^3_x)}.
\]
The same convention applies to $L^p$ and $W^{k,p}$ norms. Norms
involving only a subset of the variables, or different integrability
exponents in different variables, are indicated explicitly. 
In particular, $H_y^s(L_\eta^2)$ denotes the Sobolev space in $y$ with
values in $L^2(\R^3_\eta)$, with norm
\[
\|f\|_{H_y^s(L_\eta^2)} =\|\langle D_y\rangle^s f\|_{L^2_{y,\eta}}.
\]
For vector- and matrix-valued functions, norms are taken componentwise.

\item The notation $A\lesssim B$ means that $A\leq CB$ for a constant $C$ independent of $t$ and of the small parameters, which may change from line to line. Unless otherwise stated, constants may depend on $\alpha$, $\lambda$, and the fixed regularity indices. Dependence on additional parameters is indicated by a subscript. We write $A\lesssim_\delta B$, for example, when the constant may depend on $\delta$. We write $A\simeq B$ if $A\lesssim B$ and $B\lesssim A$.

The notation $a+$ denotes $a+\delta$ for an arbitrarily small fixed
$\delta>0$. If $X$ is a Banach space and $A=A(t)\geq0$, we write
\[
R(t)=\mathcal O_X(A(t))\quad\text{if}\quad \|R(t)\|_X\lesssim A(t),
\]
uniformly over the relevant range of $t$.

\item We use the notation $D=-i\nabla$. More generally, for a symbol
$m:\mathbb R^3\to\mathbb C$, we denote by $m(D)$ the corresponding
Fourier multiplier, defined by
\[
\widehat{m(D)f}(\xi)=m(\xi)\widehat f(\xi).
\]
We note that if $m(-\xi)=\overline{m(\xi)}$, then $m(D)$ preserves
real-valued functions; in particular, this holds whenever $m$ is
real-valued and even.

\item We write $[A,B]=AB-BA$ for the commutator of two operators.
\end{enumerate}


\subsection{Commutator and convolution estimates}
\begin{lemma}\label{lem:commutator}
Let $N\geq3$ be an integer. Suppose that $\nabla_y a\in H^{N-1}(\mathbb R^3)$ and
$f\in H^{N-1}(\mathbb R^3)$.
For $|\mu|\leq N$,
\begin{equation}\label{eq:commutator}
\big\|[\partial_y^{\mu},a]f\big\|_{L^2}
\lesssim\|\nabla_y a\|_{H^{N-1}}\|f\|_{H^{N-1}}.
\end{equation}
In particular, no $L^2$ bound on $a$ is required.
\end{lemma}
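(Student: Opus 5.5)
We expand the commutator with the Leibniz rule,
\[
[\partial_y^\mu,a]f=\sum_{0<\nu\leq\mu}\binom{\mu}{\nu}\,\partial_y^\nu a\;\partial_y^{\mu-\nu}f .
\]
Since each term has $|\nu|\geq1$, we write $\partial_y^\nu a=\partial_y^{\nu'}\partial_{y_i} a$ with $|\nu'|=|\nu|-1\leq N-1$. Thus only derivatives of $\nabla_y a$ appear, which is why no $L^2$ bound on $a$ is needed. Set $b=\nabla_y a\in H^{N-1}$. Each term then has the form $\partial^{\nu'}b\,\partial^{\kappa}f$ with $|\nu'|+|\kappa|=|\mu|-1\leq N-1$, $|\nu'|\leq N-1$ and $|\kappa|\leq N-1$. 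It therefore suffices to bound $\|\partial^{\nu'}b\,\partial^\kappa f\|_{L^2}\lesssim\|b\|_{H^{N-1}}\|f\|_{H^{N-1}}$ whenever $|\nu'|+|\kappa|\leq N-1$.

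We split into cases by Hölder's inequality and Sobolev embedding in $\mathbb R^3$, using $N-1\geq2$.

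If $|\nu'|\leq N-3$, then $\partial^{\nu'}b\in H^2\subset L^\infty$. We place it in $L^\infty$ and $\partial^\kappa f$ in $L^2$.

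If $|\kappa|\leq N-3$, we put $\partial^\kappa f\in L^\infty$ symmetrically.

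In the remaining case, both $|\nu'|\geq N-2$ and $|\kappa|\geq N-2$. This forces $2(N-2)\leq N-1$, i.e.\ $N\leq3$. Hence $N=3$ and $|\nu'|=|\kappa|=1$. Both $\partial b$ and $\partial f$ then lie in $H^1\subset L^4$ (indeed $H^1\subset L^p$ for $2\leq p\leq6$), and Hölder's inequality with $L^4\cdot L^4\subset L^2$ closes the estimate.

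More generally, a Gagliardo--Nirenberg/Hölder choice $\partial^{\nu'}b\in L^{p}$, $\partial^\kappa f\in L^{q}$ with $1/p+1/q=1/2$, via $H^{N-1-|\nu'|}\subset L^p$, handles every case uniformly. One can verify that the exponents are admissible when $N-1\geq2$.

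The only delicate point is this bookkeeping of the intermediate cases, where neither factor carries few enough derivatives to be put in $L^\infty$. With $N\geq3$ and the total derivative count at most $N-1$, the counting shows that these cases reduce to the $L^4\times L^4$ estimate above, so no genuine obstacle arises. Summing the finitely many terms, with the binomial constants depending only on $N$, gives \eqref{eq:commutator}.
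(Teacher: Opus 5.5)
Your proof is correct and follows the paper's argument: the same Leibniz expansion, with one derivative $\partial_{y_j}$ peeled off $\partial_y^\nu a$ so that only $\nabla_y a$ appears and the total derivative count is $|\mu|-1\leq N-1$. Where the paper simply invokes the differentiated Sobolev product estimate (using $N-1>3/2$), you verify it explicitly through the $L^\infty$--$L^2$ cases and the single borderline $L^4$--$L^4$ case at $N=3$, and that verification is sound.
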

\begin{proof}
The Leibniz rule gives
\[
[\partial_y^\mu,a]f
=\sum_{0<\nu\leq\mu}\binom\mu\nu
(\partial_y^\nu a)(\partial_y^{\mu-\nu}f).
\]
For each summand, choose $j$ such that $\nu_j\geq1$. Since
\[
|\nu-e_j|+|\mu-\nu|=|\mu|-1\leq N-1,
\]
the differentiated Sobolev product estimate yields
\[
\|(\partial_y^\nu a)(\partial_y^{\mu-\nu}f)\|_{L^2}
=\|(\partial_y^{\nu-e_j}\partial_{y_j}a)
(\partial_y^{\mu-\nu}f)\|_{L^2}
\lesssim\|\partial_{y_j}a\|_{H^{N-1}}\|f\|_{H^{N-1}}.
\]
Here we use $N-1>3/2$. Summing over $\nu$ proves
\eqref{eq:commutator}.
\end{proof}

The same proof applies when $f$ takes values in $L^2_\eta$, with the Sobolev norms of $f$ taken in $H_y^{N-1}(L_\eta^2)$. More generally, for an integer $s\geq2$, the Sobolev product estimates in three dimensions give
\begin{equation}\label{eq:pre_hilbert_products}
\begin{aligned}
\|af\|_{H_y^s(L_\eta^2)}
&\lesssim\|a\|_{H_y^s}\|f\|_{H_y^s(L_\eta^2)},\\
\left\|\int_{\R^3}f(y,\eta)
\overline{g(y,\eta)}\,d\eta\right\|_{H_y^s}
&\lesssim\|f\|_{H_y^s(L_\eta^2)}\|g\|_{H_y^s(L_\eta^2)}.
\end{aligned}
\end{equation}
These follow by differentiating, applying H\"older's inequality in $y$ and, in the second estimate, Cauchy-Schwarz in $\eta$.

\begin{lemma}\label{lem:KP}
Let $r\geq1$, and write $\langle D\rangle^r=(1-\Delta)^{r/2}$.
Suppose that $a\in W^{1,\infty}(\mathbb R^3)$, with
$\nabla a\in H^{r-1}(\mathbb R^3)$, and
$f\in H^{r-1}(\mathbb R^3)\cap L^\infty(\mathbb R^3)$. Then
\begin{equation}\label{eq:KP}
\big\|[\langle D\rangle^r,a]f\big\|_{L^2}
\lesssim_r
\|\nabla a\|_{L^\infty}\|f\|_{H^{r-1}}
+\|\nabla a\|_{H^{r-1}}\|f\|_{L^\infty}.
\end{equation}
In particular, no $L^2$ assumption on $a$ is required. This estimate
also applies componentwise to vector- and matrix-valued functions.
\end{lemma}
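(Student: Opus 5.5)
This estimate is the Kato--Ponce commutator estimate, and I would prove it by paraproduct decomposition. The one point needing care is that only $\nabla a$, not $a$, is controlled in $L^2$. Write $a f=T_a f+T_f a+R(a,f)$ with Littlewood--Paley pieces, $T_a f=\sum_k S_{k-3}a\,\Delta_k f$, and split
\[
[\langle D\rangle^r,a]f=[\langle D\rangle^r,T_a]f+\langle D\rangle^r\big(T_f a+R(a,f)\big)-T_{\langle D\rangle^r f}a-R(a,\langle D\rangle^r f).
\]
The low-frequency part of $a$ (say $P_{\leq 0}a$) is not in $L^2$. Each term involving $a$ at frequency $2^k$ with $k\geq 1$ is controlled through $\nabla a$, since $\|\Delta_k a\|_{L^p}\simeq 2^{-k}\|\Delta_k\nabla a\|_{L^p}$. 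The piece $P_{\leq 0}a$ is handled separately. It is smooth with $\|\nabla^j P_{\leq0}a\|_{L^\infty}\lesssim\|\nabla a\|_{L^\infty}$ for $j\geq1$. The commutator $[\langle D\rangle^r,P_{\leq0}a]$ is then a pseudodifferential operator of order $r-1$ whose symbol seminorms involve only derivatives of $P_{\leq0}a$. This follows from the symbolic expansion, or concretely from writing the kernel of $[\langle D\rangle^r,b]$ via $b(x)-b(z)=\int_0^1\nabla b(z+s(x-z))\cdot(x-z)\,ds$. It therefore maps $H^{r-1}\to L^2$ with bound $\lesssim\|\nabla a\|_{L^\infty}$.

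For the high-frequency part $a_{\rm hi}=P_{>0}a$, I estimate the terms in turn.
\begin{itemize}
\item In the paraproduct commutator $[\langle D\rangle^r,T_{a}]f$, each dyadic piece $[\langle D\rangle^r P_{\sim k}, S_{k-3}a]\Delta_k f$ is bounded by $2^{k(r-1)}\|\nabla S_{k-3}a\|_{L^\infty}\|\Delta_k f\|_{L^2}$, using the mean-value kernel bound for the frequency-localized symbol $\langle\xi\rangle^r\chi(2^{-k}\xi)$. Almost-orthogonality then gives $\|\nabla a\|_{L^\infty}\|f\|_{H^{r-1}}$.
\item The term $\langle D\rangle^r T_f a_{\rm hi}$ has pieces at frequency $2^k$ bounded by $2^{kr}\|S_{k-3}f\|_{L^\infty}\|\Delta_k a\|_{L^2}\lesssim 2^{k(r-1)}\|f\|_{L^\infty}\|\Delta_k\nabla a\|_{L^2}$, which sums to $\|f\|_{L^\infty}\|\nabla a\|_{H^{r-1}}$.
\item For $\langle D\rangle^rR(a_{\rm hi},f)$, the output lies at frequencies $\lesssim 2^k$ and $r\geq1>0$, so the same bound follows, placing $f$ in $L^\infty$.
\item In $T_{\langle D\rangle^r f}a_{\rm hi}$, the low-frequency factor satisfies $\|S_{k-3}\langle D\rangle^r f\|_{L^\infty}\lesssim 2^{kr}\|f\|_{L^\infty}$ for $r>0$, giving the $\|f\|_{L^\infty}\|\nabla a\|_{H^{r-1}}$ term again.
\item For $R(a_{\rm hi},\langle D\rangle^r f)$, I place $\Delta_k a$ in $L^\infty$ via $2^{-k}\|\nabla a\|_{L^\infty}$ and $\Delta_k\langle D\rangle^r f$ in $L^2$ as $2^{k}\cdot 2^{k(r-1)}\|\Delta_k f\|_{L^2}$. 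The powers of $2^k$ cancel, leaving $\|\nabla a\|_{L^\infty}\|f\|_{H^{r-1}}$, and the output summation in $L^2$ is harmless.
\end{itemize}

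The low-frequency/high-frequency interactions between $P_{\leq0}a$ and $f$ are already covered by the pseudodifferential argument above. The vector- and matrix-valued case follows by applying the scalar estimate componentwise.

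The main obstacle is bookkeeping: one must verify that no term ever requires $\|a\|_{L^2}$. This is why the splitting $a=P_{\leq0}a+P_{>0}a$ is introduced at the outset, with only derivatives of $P_{\leq0}a$ entering the commutator. Alternatively, I could simply cite the classical Kato--Ponce estimate with $\nabla a$ in place of $a$ after observing that the commutator annihilates constants. That is essentially the same reduction.
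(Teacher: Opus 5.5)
\textbf{The gap is in your last high-frequency term, the resonant piece $R(a_{\mathrm{hi}},\langle D\rangle^r f)=\sum_k \Delta_k a\,\tilde\Delta_k\langle D\rangle^r f$.} The rest is sound. The symbol of $[\langle D\rangle^r,P_{\le 0}a]$ lies in $S^{r-1}_{1,0}$ with seminorms $\lesssim\|\nabla a\|_{L^\infty}$. The two paraproduct terms have annular output. The term $\langle D\rangle^r R(a_{\mathrm{hi}},f)$ gains the factor $2^{(j-k)r}$ at output frequency $2^j$.

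The resonant summands are different. They are Fourier-supported only in balls $\{|\xi|\lesssim 2^k\}$ and carry no output weight. So every $k\ge j-3$ contributes to output frequency $2^j$ with no decay, and "the output summation in $L^2$ is harmless" fails. The triangle inequality only gives $\|\nabla a\|_{L^\infty}\sum_k 2^{k(r-1)}\|\tilde\Delta_k f\|_{L^2}$, which is the Besov norm $B^{r-1}_{2,1}$ of $f$, not $\|f\|_{H^{r-1}}$.

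Better bookkeeping cannot fix this. Your argument uses only the blockwise bound $\|\Delta_k a\|_{L^\infty}\lesssim 2^{-k}\|\nabla a\|_{L^\infty}$, so it would prove $\|\sum_k\Delta_k a\,\tilde\Delta_k\langle D\rangle^r f\|_{L^2}\lesssim \sup_k 2^k\|\Delta_k a\|_{L^\infty}\|f\|_{H^{r-1}}$, and that inequality is false. Take the Weierstrass-type $a=\sum_{k\ge1}2^{-k}\cos(2^k y_1)$. Take $f$ with $\tilde\Delta_k\langle D\rangle^r f\approx c_k2^k\cos(2^ky_1)\phi(y)$, where $\phi$ is a fixed bump with narrow Fourier support and $c\in\ell^2\setminus\ell^1$. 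Then $\|f\|_{H^{r-1}}\simeq\|c\|_{\ell^2}$. However, the $k$-th summand has zero-frequency part $\tfrac12 c_k\phi$, so the sum diverges.

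\textbf{How to close it.} You must use that $\nabla a$ lies in $L^\infty\subset BMO$ as a whole, not blockwise. One route is duality. Write $\langle R,g\rangle=\sum_k\int (2^k\Delta_k a)\,(2^{-k}\tilde\Delta_k\langle D\rangle^r f)\,\overline{S_{k+4}g}$, apply Cauchy--Schwarz, and use Carleson's embedding for the measure $\sum_k|2^k\Delta_k a|^2\,dy\,\delta_{t=2^{-k}}$, whose Carleson norm is $\lesssim\|\nabla a\|_{BMO}^2$. This gives $\|R\|_{L^2}\lesssim\|\nabla a\|_{L^\infty}\|f\|_{H^{r-1}}$. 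Equivalently, $R$ is a Coifman--Meyer bilinear multiplier in $(\nabla a,\langle D\rangle^{r-1}f)$, bounded $L^\infty\times L^2\to L^2$.

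A more elementary route uses the second term on the right of \eqref{eq:KP}. Apply pointwise Cauchy--Schwarz in $k$, then H\"older in $L^4\times L^4$, then Littlewood--Paley square functions in $L^4$, and finally the Gagliardo--Nirenberg bounds $\|\langle D\rangle^{\frac{r-1}{2}}f\|_{L^4}\lesssim\|f\|_{L^\infty}^{1/2}\|f\|_{H^{r-1}}^{1/2}$ (and likewise for $\nabla a$). This gives the bound $(\|\nabla a\|_{L^\infty}\|f\|_{H^{r-1}})^{1/2}(\|\nabla a\|_{H^{r-1}}\|f\|_{L^\infty})^{1/2}$.

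For comparison, the paper does not prove the lemma at all; it cites Li's refined Kato--Ponce theorem, where this resonant interaction is absorbed into Coifman--Meyer theory.

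\textbf{On your closing alternative.} Subtracting a constant does not place $a$ in $H^r$, so the classical estimate, which involves $\|\langle D\rangle^r a\|_{L^2}$, cannot be applied to $a$ directly. It can be applied to $P_{>0}a$, since $\|P_{>0}a\|_{H^r}\lesssim\|\nabla a\|_{H^{r-1}}$, with $P_{\le0}a$ handled by your symbolic argument. That combination is a valid proof by citation.
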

This is the derivative refinement of the Kato--Ponce estimate \cite{KP}, in the form proved in \cite[Theorem 1.1]{Li}.

\medskip

We also use the fractional product estimate
\begin{equation}\label{eq:pre_fractional_product}
\|ab\|_{H^r} \lesssim_r\|a\|_{L^\infty}\|b\|_{H^r} +\|b\|_{L^\infty}\|a\|_{H^r}, \qquad r>0,
\end{equation}
for $a,b\in H^r(\mathbb R^3)\cap L^\infty(\R^3)$; see \cite{KP,Li}.

For the Riesz kernel $V_\alpha(y)=\lambda|y|^{-\alpha}$, we use
\[
\widehat{V_\alpha*f}(\xi) =c_\alpha\lambda|\xi|^{\alpha-3}\widehat f(\xi),
\]
for $f\in L^1\cap L^\infty$, in the sense of tempered distributions, where $c_\alpha>0$ depends on the Fourier normalization. In the range $0<\alpha\leq\tfrac12$ considered here, the following physical-space estimates also give two bounded derivatives of the potential.

\begin{lemma}\label{lem:pre_convolution}
Let $0<\alpha\leq\tfrac12$, and let $s\geq0$ be an integer. For $f\in W^{s,1}(\mathbb R^3)\cap W^{s,\infty}(\mathbb R^3)$ and $F\in L^1(\mathbb R^3;\mathbb R^3)\cap L^\infty(\mathbb R^3;\mathbb R^3)$, we have
\begin{align}
\|V_\alpha*f\|_{W^{s+2,\infty}}&\lesssim\|f\|_{W^{s,1}}+\|f\|_{W^{s,\infty}},
\label{eq:V_convolution}\\
\|\nabla V_\alpha*F\|_{W^{1,\infty}}& \lesssim\|F\|_{L^1}+\|F\|_{L^\infty}.
\label{eq:nabla_V_convolution}
\end{align}
For a vector field $F$, we write
\[
\nabla V_\alpha*F :=\sum_{\ell=1}^3(\partial_\ell V_\alpha)*F_\ell.
\]
\end{lemma}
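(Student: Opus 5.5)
The plan is to move derivatives onto $f$ and then estimate $|y|^{-\alpha-k}*g$ for $k=0,1,2$ by splitting the kernel into a local part (integrable near the origin) and a far part (bounded away from it). For a multi-index $|\mu|\leq s$ we have $\partial^\mu(V_\alpha*f)=V_\alpha*\partial^\mu f$. Hence \eqref{eq:V_convolution} reduces to showing, for $g\in L^1\cap L^\infty$,
\[
\|\partial^\gamma V_\alpha * g\|_{L^\infty}\lesssim \|g\|_{L^1}+\|g\|_{L^\infty},\qquad |\gamma|\leq 2,
\]
applied with $g=\partial^\mu f$. Similarly, \eqref{eq:nabla_V_convolution} reduces to the same bound for $\partial_\ell\partial^\gamma V_\alpha*F_\ell$ with $|\gamma|\leq1$. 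This is the case $|\gamma|\leq2$ again, with $g=F_\ell$.

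The key pointwise fact is $|\partial^\gamma V_\alpha(y)|\lesssim|\lambda||y|^{-\alpha-|\gamma|}$ for $y\neq0$. Since $\alpha+|\gamma|\leq 5/2<3$, each such kernel is locally integrable. Splitting at $|y|=1$, the local part $\mathbf 1_{|y|<1}\partial^\gamma V_\alpha$ lies in $L^1$, so its convolution with $g$ is bounded by $\|g\|_{L^\infty}$. The far part $\mathbf 1_{|y|\geq1}\partial^\gamma V_\alpha$ lies in $L^\infty$, so its convolution is bounded by $\|g\|_{L^1}$. This is exactly where $\alpha\leq 1/2$ (in fact $\alpha<1$) is used: two derivatives still give an integrable singularity.

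The main obstacle is justifying that the distributional derivative $\partial^\gamma(V_\alpha*g)$ equals the convolution of $g$ with the pointwise derivative of $V_\alpha$, with no delta-type contribution at the origin. For $|\gamma|\leq 2$ this holds because $|y|^{-\alpha-1}$ is integrable on spheres shrinking to $0$. Concretely, the boundary terms in the integration by parts over $\{|y|>\epsilon\}$ are of size $\epsilon^{2}\cdot\epsilon^{-\alpha-1}\to0$. Since $\partial^\gamma V_\alpha\in L^1_{\rm loc}$ for $|\gamma|\leq2$, the distributional and pointwise derivatives coincide. Alternatively, one can first differentiate under the integral for $g\in C_c^\infty$, use the kernel bounds above, and conclude by density, handling $g\in L^1\cap L^\infty$ through weak-$*$ approximation in $L^\infty$. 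Continuity of the resulting functions comes from the $L^1$–$L^\infty$ pairing with translations, so the $W^{s+2,\infty}$ and $W^{1,\infty}$ norms are controlled. Summing over $|\mu|\leq s$ and $|\gamma|\leq2$ gives both estimates.
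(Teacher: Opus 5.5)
Your proposal is correct and follows the paper's proof essentially step for step. The shared steps are the pointwise bound $|\partial^\gamma V_\alpha(y)|\lesssim|y|^{-\alpha-|\gamma|}$ for $|\gamma|\le2$, the split of the kernel at $|y|=1$ into an $L^1$ piece paired with $\|g\|_{L^\infty}$ and an $L^\infty$ piece paired with $\|g\|_{L^1}$, integration by parts outside a small ball with boundary terms vanishing because $\alpha<1$, and extension from smooth compactly supported data by approximation.

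One small point of phrasing: local integrability of the pointwise derivatives alone does not identify them with the distributional derivatives (compare $\Delta|y|^{-1}$). It is the vanishing of the boundary terms, which you do verify, that carries that step.
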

\begin{proof}
Indeed, $V_\alpha$, $\nabla V_\alpha$, and $\nabla^2V_\alpha$ are integrable on the unit ball and bounded outside it. More precisely, for $|\beta|\leq2$ and $y\neq0$,
\[
|\partial^\beta V_\alpha(y)| \lesssim|y|^{-\alpha-|\beta|}.
\]
Since $\alpha+|\beta|<3$, these functions are locally integrable. Integration by parts outside a ball of radius $\delta$, followed by $\delta\to0$, shows that they are also the distributional derivatives of $V_\alpha$: the boundary terms vanish because $\alpha<1$. Consequently, for $q\in L^1\cap L^\infty$,
\begin{equation}\label{eq:pre_kernel_split}
\begin{aligned}
\|(\partial^\beta V_\alpha)*q\|_{L^\infty} &\leq
\|\partial^\beta V_\alpha\|_{L^1(|y|\leq1)}\|q\|_{L^\infty} +\|\partial^\beta V_\alpha\|_{L^\infty(|y|>1)}\|q\|_{L^1}\lesssim\|q\|_{L^1}+\|q\|_{L^\infty}.
\end{aligned}
\end{equation}
For $|\mu|\leq s+2$, choose $\beta\leq\mu$ with $|\beta|=\min\{2,|\mu|\}$. Then $|\mu-\beta|\leq s$, and
\[
\partial^\mu(V_\alpha*f) =(\partial^\beta V_\alpha)*(\partial^{\mu-\beta}f).
\]
This identity holds first for smooth compactly supported $f$ and extends to the stated spaces by cutoff and mollification. Applying \eqref{eq:pre_kernel_split} and summing over $\mu$ proves \eqref{eq:V_convolution}.
For \eqref{eq:nabla_V_convolution}, apply \eqref{eq:pre_kernel_split} componentwise to $(\partial_\ell V_\alpha)*F_\ell$ and $(\partial_j\partial_\ell V_\alpha)*F_\ell$, and sum over $j,\ell$.
\end{proof}

Finally, we record the elementary kernel bound used for the complementary force. Let $P_\ell=\psi(2^{-\ell}D)$, $\ell\in\Z$, where $\psi\in C_c^\infty(\R^3)$ is supported in $\{\tfrac12\leq|\xi|\leq2\}$ and
$\sum_{\ell\in\mathbb Z}\psi(2^{-\ell}\xi)=1$ for $\xi\neq0$. If $b_\ell\in C_c^4(\R^3)$ is supported in $\{2^{\ell-1}\leq|\xi|\leq2^{\ell+1}\}$, then
\begin{equation}\label{eq:pre_dyadic_kernel}
\|\mathcal F^{-1}b_\ell\|_{L^1} \lesssim \sum_{|\beta|\leq 4}2^{\ell|\beta|} \|\partial_\xi^\beta b_\ell\|_{L^\infty}.
\end{equation}
Indeed, after rescaling to $\ell=0$, integration by parts gives
\[
|\mathcal F^{-1}b_0(y)| \lesssim\langle y\rangle^{-4} \sum_{|\beta|\leq4}\|\partial_\xi^\beta b_0\|_{L^1} \lesssim\langle y\rangle^{-4} \sum_{|\beta|\leq4}\|\partial_\xi^\beta b_0\|_{L^\infty}.
\]
Integrating in $y$ and undoing the rescaling proves \eqref{eq:pre_dyadic_kernel}.


\subsection{Galilean vector fields}
For the free Schr\"odinger operator $i\partial_t+\tfrac12\Delta_x$, define
\[
G_j(t):=x_j+it\partial_{x_j},\qquad G(t)=(G_1(t),G_2(t),G_3(t)).
\]
These operators commute with one another and all commute with the free Schr\"odinger operator
\[
\left[i\partial_t+\tfrac12\Delta_x,\,G_j(t) \right]=0,\qquad G(0)=x.
\]
Moreover, we have the useful expansion of the Galilean vector field,
\begin{equation*}
    \begin{split}
        G_j(t)=e^{i\frac{|x|^2}{2t}}(it\pr_{x_j})e^{-i\frac{|x|^2}{2t}}.
    \end{split}
\end{equation*}
As a consequence, for any multi-index $\mu$, we have
\begin{equation}\label{eq:G_expression}
\begin{split}
        G(t)^{\mu}=e^{i|x|^2/2t}(it)^{|\mu|}\pr_x^\mu e^{-i|x|^2/2t}.
    \end{split}
\end{equation}

\begin{lemma}\label{lem:galilean_profile}
Let $t>0$, let $u(t)\in L^2(\R^3)$, and let $g(t,y)=t^{3/2}e^{-it\frac{|y|^2}{2}}u(t,ty)$ be the profile defined in \eqref{eq:g}. Then, for every multi-index $\mu$,
\begin{equation}\label{eq:G_profile_identity}
    \pr_y^{\mu}g(t,y)=(-i)^{|\mu|}\,t^{\frac32}\,e^{-it\frac{|y|^2}{2}}\,\big(G(t)^{\mu}u(t)\big)(ty)
\end{equation}
in the sense of distributions. In particular,
\begin{equation}\label{eq:G_profile_L2}
\|\pr_y^{\mu}g(t)\|_{L^2_y}=\|G(t)^{\mu}u(t)\|_{L^2_x}.
\end{equation}
\end{lemma}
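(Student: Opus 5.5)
We prove \eqref{eq:G_profile_identity} by a direct chain-rule computation based on the conjugation formula \eqref{eq:G_expression}, and then obtain \eqref{eq:G_profile_L2} by a change of variables.

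\emph{Reduction to a scaling identity.} Write $u(t,x)=e^{i|x|^2/(2t)}w(x)$ with $w:=e^{-i|x|^2/(2t)}u(t,\cdot)$. Evaluating at $x=ty$ gives $|x|^2/(2t)=t|y|^2/2$. Hence
\[
g(t,y)=t^{3/2}e^{-it|y|^2/2}e^{it|y|^2/2}w(ty)=t^{3/2}w(ty).
\]
So the profile is simply the $L^2$-normalized dilate of $w$.

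\emph{Differentiation.} The chain rule gives $\partial_y^\mu[w(ty)]=t^{|\mu|}(\partial_x^\mu w)(ty)$. By \eqref{eq:G_expression},
\[
(it)^{|\mu|}\partial_x^\mu w=e^{-i|x|^2/(2t)}G(t)^\mu u,
\]
so that
\[
t^{|\mu|}\partial_x^\mu w=(-i)^{|\mu|}e^{-i|x|^2/(2t)}G(t)^\mu u.
\]
Evaluating at $x=ty$ and multiplying by $t^{3/2}$ yields exactly \eqref{eq:G_profile_identity}.

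\emph{Justification in distributions.} For $u(t)\in L^2$ only, both sides are distributions. The operator $G(t)^\mu u$ is defined distributionally, and \eqref{eq:G_expression} holds in $\mathcal S'$, since multiplication by the smooth, polynomially bounded phase $e^{\pm i|x|^2/(2t)}$ is continuous on $\mathcal S'$. Dilation commutes with distributional derivatives through the chain rule. Alternatively, we verify the identity for $u\in\mathcal S$ and extend by density, using that every operation involved is continuous $\mathcal S'\to\mathcal S'$.

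\emph{$L^2$ identity.} Substitute $x=ty$, so that $dx=t^3\,dy$. The phase has modulus one, hence
\[
\|\partial_y^\mu g\|_{L^2_y}^2=\int t^3|(G^\mu u)(ty)|^2\,dy=\|G(t)^\mu u\|_{L^2_x}^2.
\]
This equality holds in $[0,\infty]$, so finiteness of either side implies finiteness of the other.

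The only mildly delicate point is the distributional justification. There is no real obstacle.
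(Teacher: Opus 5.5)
Your proof is correct and follows the paper's argument: both combine the chain rule for the dilation $x=ty$ with the conjugation identity \eqref{eq:G_expression}, and then obtain \eqref{eq:G_profile_L2} from the change of variables $dx=t^3\,dy$. The only differences are minor: you differentiate $t^{3/2}w(ty)$ in $y$ whereas the paper differentiates $t^{-3/2}g(t,x/t)$ in $x$, and your remark on the distributional justification makes explicit a point the paper leaves implicit.
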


\begin{proof}
By definition of $g$, $e^{-i\frac{|x|^2}{2t}}u(t,x)=t^{-\frac32}g(t,x/t)$, so the chain rule gives
\[
\pr_x^{\mu}\Big[e^{-i\frac{|x|^2}{2t}}u(t,x)\Big]=t^{-\frac32-|\mu|}\,(\pr_y^{\mu}g)(t,x/t).
\]
Inserting this into \eqref{eq:G_expression} yields
\[
\big(G(t)^{\mu}u\big)(t,x)=e^{i\frac{|x|^2}{2t}}(it)^{|\mu|}\,t^{-\frac32-|\mu|}\,(\pr_y^{\mu}g)(t,x/t)
= i^{|\mu|}\,t^{-\frac32}\,e^{i\frac{|x|^2}{2t}}\,(\pr_y^{\mu}g)(t,x/t).
\]
Setting $x=ty$ and solving for $\pr_y^\mu g$ gives \eqref{eq:G_profile_identity}. Finally, \eqref{eq:G_profile_L2} follows from \eqref{eq:G_profile_identity} by the change of variables $x=ty$, $dx=t^{3}dy$.
\end{proof}


\subsection{Hamiltonian transport}
Let $\mathcal{H}=\mathcal{H}(t,x,v)$  be a real-valued Hamiltonian and set
\[
b_{\mathcal{H}}:=\left(\nabla_v \mathcal{H},-\nabla_x \mathcal{H}\right).
\]
The vector field $b_\mathcal{H}$ is divergence-free on phase space. Its characteristic flow therefore preserves Lebesgue measure.  For functions on phase space, we use the Poisson bracket
\begin{equation}\label{eq:poisson}
   \{f,g\}=\nabla_x f\cdot \nabla_vg-\nab_v f\cdot \nab_x g.
\end{equation}
\begin{lemma}\label{lem:hamiltonian_transport}
    Let $1\leq r\leq \infty$, and suppose that
    \begin{equation}\label{eq:hamiltonian_transport_eqn}
        \partial_t f +\{ f,\mathcal{H}\}=g,\quad f(t=0)=f_0, 
    \end{equation}
    on $[0,T]$. Assume that the characteristic flow of $b_\mathcal{H}$ is well-defined and that $g\in L^1([0,T];L^r(\mathbb R^3\times \R^3))$. Then 
    \begin{equation*}
    \begin{split}
        \|f(t)\|_{L^r_{x,v}}\leq \|f_0\|_{L^r_{x,v}}+\int_0^t \|g(s)\|_{L^r_{x,v}}ds, \quad t\in [0,T].
    \end{split}
\end{equation*}
\end{lemma}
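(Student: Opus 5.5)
The plan is to integrate along the characteristics of $b_\mathcal H$ and use that this flow preserves Lebesgue measure. Let $Z(s;t,z)$, with $z=(x,v)$, denote the characteristic flow of $b_\mathcal H$, i.e.\ $\frac{d}{ds}Z(s;t,z)=b_\mathcal H(s,Z(s;t,z))$ and $Z(t;t,z)=z$. Because $b_\mathcal H$ is divergence-free on phase space, Liouville's theorem shows that each map $z\mapsto Z(s;t,z)$ preserves Lebesgue measure on $\R^3\times\R^3$. The equation \eqref{eq:hamiltonian_transport_eqn} can be rewritten as $\partial_t f+b_\mathcal H\cdot\nabla_{x,v}f=g$, since $\{f,\mathcal H\}=\nabla_xf\cdot\nabla_v\mathcal H-\nabla_vf\cdot\nabla_x\mathcal H$. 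Along characteristics this reads $\frac{d}{ds}f(s,Z(s;t,z))=g(s,Z(s;t,z))$.

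Integrating in $s$ from $0$ to $t$ gives the Duhamel representation
\[
f(t,z)=f_0(Z(0;t,z))+\int_0^t g(s,Z(s;t,z))\,ds .
\]
We then take $L^r_z$ norms and apply Minkowski's integral inequality to the time integral. Measure preservation gives $\|f_0\circ Z(0;t,\cdot)\|_{L^r}=\|f_0\|_{L^r}$ and $\|g(s)\circ Z(s;t,\cdot)\|_{L^r}=\|g(s)\|_{L^r}$. For $r=\infty$ this holds simply because $Z(s;t,\cdot)$ is a bijection, and for $r<\infty$ it follows from the change of variables with unit Jacobian. Combining these yields the claimed inequality. The hypothesis $g\in L^1([0,T];L^r)$ makes the right-hand side finite and justifies Minkowski's inequality.

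The only delicate point is justifying the Duhamel formula when $f$ is merely a (weak or distributional) solution rather than $C^1$. The statement assumes the flow is well-defined, so I would treat $f$ as the solution given by the characteristic representation, which is the standard meaning. If needed, one would mollify $f_0$ and $g$, apply the argument to the resulting smooth solutions, and pass to the limit using the linearity of the equation and the $L^r$ bound just proved. An alternative for $1\le r<\infty$ is an energy argument: multiply by $|f|^{r-2}f$ and integrate, noting that the transport term integrates to zero because $\nabla\cdot b_\mathcal H=0$. I would use the characteristic argument, since it covers $r=\infty$ uniformly.
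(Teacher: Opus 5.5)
Your proposal is correct and is essentially the paper's argument: the paper likewise integrates the equation along the characteristic flow of $b_{\mathcal{H}}$ and uses Liouville's theorem to transfer the $L^r$ norms of $f_0$ and $g(s)$, so your Duhamel representation combined with Minkowski's inequality gives the stated bound. One small precision: for $r=\infty$, bijectivity of $Z(s;t,\cdot)$ alone does not preserve essential suprema; what you need is that the flow maps and their inverses send null sets to null sets, which your measure-preservation statement already provides.
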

\noindent The proof follows by integrating \eqref{eq:hamiltonian_transport_eqn} along the Hamiltonian flow and using Liouville’s theorem. We refer to Glassey \cite{Glassey} for the standard characteristic theory for Vlasov equations. 

We will also frequently use the Leibniz rule for the Poisson bracket and its commutation with differentiation:
\begin{equation*}
    \{fg,h\}=f\{g,h\}+g\{f,h\},
    \qquad
    \nabla\{f,g\}=\{\nabla f,g\}+\{f,\nabla g\}.
\end{equation*}


\section{A common structure for long-range dynamics}\label{sec:idea}
In this section, we discuss the \emph{common action--density} structure which has two roles in our argument. It determines the corrections used to construct limiting profiles, and it generates the successive terms in the asymptotic action. We explain these roles by placing the wave and kinetic equations in \emph{common coordinates}, identifying the \emph{action} that \emph{corrects} their evolution, and examining the \emph{resulting density flux}. We defer the global bounds and asymptotic estimates to Sections \ref{sec:boot} and \ref{sec:asymptotics}.

\subsection{Profiles along free rays}
For $t\geq1$, we write $y=x/t$ and introduce the profiles\footnote{These profiles correspond to the pseudoconformal change of variables, with the original time variable retained.}
\begin{equation}\label{eq:g}
    \begin{split}
        g_\hs(t,y) :=t^{3/2}e^{-it\frac{|y|^2}{2}}u(t,ty), \qquad g_\ks(t,y,\eta) :=f\left(t,ty,y+\frac{\eta}{t}\right).
    \end{split}
\end{equation}
{The Hartree transformation removes the quadratic phase and the spatial decay of the free Schr\"odinger evolution. In the kinetic transformation, $\eta=tv-x$ is constant along free characteristics. The map $(y,\eta)\mapsto(x,v)=(ty,y+\eta/t)$ is symplectic,  and in particular preserves phase-space volume. Thus both transformations preserve the corresponding $L^2$ norms.}

In these coordinates, the rescaled spatial densities \eqref{eq:m_rescaled} take the form
\begin{equation}
    m_{\hs}(t,y)=|g_{\hs}(t,y)|^2, \qquad
m_\ks(t,y)=\int_{\R^3}g_\ks^2(t,y,\eta)\,d\eta.
\end{equation}
Thus both rescaled densities are quadratic expressions in the profiles at fixed $y$, with momentum integrated out in the kinetic case. This common form allows us to derive their continuity equations in parallel. 

We retain the component and source labels from Subsection \ref{subsec:main}: $k\in\{\hs,\ks\}$ denotes the component under consideration, and $j\in\{\hs,\ks\}$ denotes the source of its interaction field. Set 
\begin{equation}\label{eq:varphi}
    \varphi_k:=V_\alpha*m_j.
\end{equation}
For the scalar equations \eqref{VR}--\eqref{Hartree}, we have $k=j$, since each component generates its own interaction field. In the coupled system \eqref{VH}, $k\neq j$, since each component generates the interaction field acting on the other. Homogeneity of $V_\alpha$ then gives
\begin{equation}
    \phi(t,ty)=t^{-\alpha}\varphi_{\hs}(t,y), \qquad \Psi(t,ty)=t^{-\alpha}\varphi_\ks(t,y)
\end{equation}
in \eqref{Hartree} and \eqref{VR}, respectively. The corresponding profile equations are therefore
\begin{equation}\label{eq:g_H}
    i\partial_tg_\hs+\frac{1}{2t^2} \Delta_yg_\hs=t^{-\alpha}\varphi_\hs g_\hs,
\end{equation}
and 
\begin{equation}\label{eq:g_K}
    \partial_tg_\ks+\frac{\eta}{t^2}\cdot\nabla_yg_\ks-t^{-\alpha}\nabla_y \varphi_\ks\cdot\nabla_\eta g_\ks=0.
\end{equation}
These two profile equations have the same Hamiltonian expression,
\[
\mathcal H_k(t,y,\eta) =\frac{|\eta|^2}{2t^2}+t^{-\alpha}\varphi_k(t,y), \quad k\in\{\hs,\ks\}.
\]
It generates the kinetic characteristic flow, and replacing $\eta$ by $-i\nabla_y$ gives the Hartree operator.

This formulation also makes the interaction scale explicit. Along kinetic characteristics,
\begin{equation}
    \begin{split}
        &\dot y=t^{-2}\eta,\\
        &\dot\eta=tF(t,ty)=-t^{-\alpha}\nabla_y\varphi_\ks.
    \end{split}
\end{equation}
The additional spatial decay of the physical force is therefore compensated by the factor $t$ in the evolution of $\eta$. Both profile equations contain an interaction with time weight $t^{-\alpha}$, while the remaining free evolution carries $t^{-2}$, as seen in \eqref{eq:g_H} and \eqref{eq:g_K}. The common Hamiltonian suggests correcting this long-range interaction through an action on the ray variable $y$.


\subsection{The common nonlinear action correction}\label{subsec:correction}
Let $\theta_\hs$ and $\theta_\ks$ be real-valued functions, to be chosen below, and define
\begin{equation}\label{eq:h}
    h_\hs:=e^{i\theta_\hs}g_\hs, \qquad h_\ks(t,y,\eta):=g_\ks\left(t,y,\eta-\nabla_y\theta_\ks(t,y)\right).
\end{equation}
The action acts by phase modulation on the wave and by momentum translation on the kinetic profile. Both transformations preserve the spatial density:
\begin{equation}
    m_\hs(t,y)=|h_\hs(t,y)|^2, \qquad m_\ks(t,y)=\int_{\R^3} h_\ks^2(t,y,\eta)\,d\eta.
\end{equation}
Since the displacement is a gradient, this kinetic translation is also symplectic. These properties allow the action to modify the evolution while keeping the source density in the same quadratic form. 

The equation for the action follows from the \emph{transformed Hamiltonian} for the modified profiles $h_k$. Suppressing the component label temporarily, it is
\begin{equation}
    \mathcal K(t,y,\eta)=\frac{|\eta-\nabla_y\theta|^2}{2t^2}+t^{-\alpha} \varphi(t,y)-\partial_t\theta(t,y).
\end{equation}
This Hamiltonian describes both corrected evolutions. The kinetic momentum translation replaces $\eta$ by $\eta-\nabla_y\theta$, while the Hartree phase modulation makes the corresponding replacement of the momentum operator, from $-i\nabla_y$ to $-i\nabla_y-\nabla_y\theta$. Both corrections therefore produce the same scalar term 
\[
\frac{|\nabla_y\theta|^2}{2t^2} +t^{-\alpha} \varphi-\partial_t\theta.
\]
The choice $\partial_t\theta=t^{-\alpha}\varphi$ cancels the interaction potential but leaves the quadratic contribution generated by the momentum shift. Since the leading action gradient has size $t^{1-\alpha}$, this contribution has size $t^{-2\alpha}$ and is nonintegrable for $0<\alpha\leq\frac12$. Cancelling both terms leads to the Hamilton-Jacobi equation
\[
\partial_t\theta-\frac{|\nabla_y\theta|^2}{2t^2}=t^{-\alpha}\varphi.
\]
This equation already appears in Ginibre-Velo’s analysis of Hartree \cite{GV00I, GV00II}. Here, we observe that it also determines the kinetic correction through the same Hamiltonian structure. 

\medskip

For the global estimates, we use a regularized version of this equation. The coefficients in the modified profile equations involve spatial derivatives of the action, so their control depends on the regularity of the interaction potential. We separate this potential into a smoother part incorporated into the action and a complementary part retained in the profile evolution. Specifically, suppressing the component label, we define 
\begin{equation}\label{eq:decomposition}
    \varphi_{\lo}:=J(t)\varphi, \qquad
    \varphi_{\hi}:=(\operatorname{Id}-J(t))\varphi, \qquad J(t)=(I-t^{-1}\Delta_y)^{-\frac{\alpha}{2}}.
\end{equation}
The multiplier operator $J(t)$ is close to the identity at frequencies $|\xi|\ll t^\frac{1}{2}$, and gains $\alpha$ derivatives at higher frequencies, at the cost of a time factor $t^{\frac{\alpha}{2}}$. Thus it preserves the low-frequency potential while providing additional regularity for the action estimates. On the other hand, the complementary part gains time decay. Indeed, the symbol of $I-J(t)$ satisfies
\begin{equation}\label{eq:beta_bound}
    0\leq 1-\beta_t(\xi) \lesssim_\alpha \min\left\{1,t^{-1}|\xi|^2\right\},
\end{equation}
because
\[
    1-\beta_t(\xi)=\frac\alpha2\int_0^{t^{-1}|\xi|^2}
    (1+s)^{-1-\frac\alpha2}\,ds\,\leq \, \frac{\alpha}{2}\int_0^{t^{-1}|\xi|^2}1\,ds\lesssim_\alpha\,t^{-1}|\xi|^2.
\]
At low frequencies, this supplies a factor $t^{-1}$; at high frequencies, the decay follows from the regularity of the source density and the smoothing of the Riesz potential. The estimates in Subsection \ref{subsec:boot_common} show that $t^{-\alpha}\varphi_\hi$ is integrable in time; hence, this decomposition gains regularity in the action while leaving only an integrable field outside it.

\medskip

We accordingly define the action by 
\begin{equation}\label{eq:theta}
    \partial_t\theta-\frac{|\nabla_y\theta|^2}{2t^2} =t^{-\alpha}\varphi_{\mathrm{low}}, \qquad \theta(1,y)=0.
\end{equation}
With this choice, the transformed Hamiltonian consists of the free quadratic term, the mixed term $-t^{-2}\eta\cdot\nabla_y\theta$, and the residual potential $t^{-\alpha}\varphi_\hi$. Restoring the component labels, set
\begin{equation}\label{eq:Gamma}
    \Gamma_k:=\nabla_y\theta_k.
\end{equation}
For the Hartree component, define
\begin{equation}\label{eq:T_theta}
    \mathcal{T}_{\theta}:=\Gamma_\hs\cdot \nabla_y+\frac{1}{2}\nabla_y\cdot \Gamma_\hs,
\end{equation}
and this first-order operator is skew-adjoint on $L^2$ from integration by parts:
\[
  \Re\la\,\mathcal T_\theta\, w,\,w\ra_{L^2}=0.
\] 
The modified profiles satisfy
\begin{equation}\label{eq:PDE_h_H}
    i\partial_t h_\hs +\frac{1}{2t^2}\Delta_y h_\hs-\frac{i}{t^2}\mathcal T_\theta h_\hs=t^{-\alpha}\varphi_{\hs,\hi}h_\hs,
\end{equation}
and 
\begin{equation}\label{eq:PDE_h_K}
    \partial_t h_\ks+\frac{\eta-\Gamma_\ks}{t^2}\cdot\nabla_yh_\ks+\left(\frac{\nabla\Gamma_\ks\eta}{t^2}-t^{-\alpha} \nabla_y\varphi_{\ks,\hi}\right)\cdot\nabla_\eta h_\ks=0.
\end{equation}
Both equations retain the conservation structure needed for the energy estimates. In particular, on the kinetic side, it is still a Hamiltonian system with Hamiltonian $\K$ defined in \eqref{eq:PDE_h_Hamiltonian}.

For solutions of size $\varepsilon$, the expected action bounds are
\[
\|\Gamma_k(t)\|_{L^\infty}+\|\nabla_y \Gamma_k(t)\|_{L^\infty}\lesssim\varepsilon^2t^{1-\alpha}, \qquad k\in \{\hs,\ks\}.
\]
Thus $t^{-2}\Gamma_k$ and $t^{-2}\nabla\Gamma_k$ have the integrable time weight $t^{-1-\alpha}$. The complementary field is likewise integrable in the norms needed for the corrected evolution. These rates must be combined with bounds on spatial derivatives for Hartree, and with momentum moments and mixed derivatives for Vlasov--Riesz.  

Section \ref{sec:boot} carries out these estimates and constructs the limiting profiles. {For Hartree, a difference energy estimate gives convergence with one derivative lost, and interpolation gives convergence at every Sobolev order below $N$.} For the kinetic equation, the corresponding argument uses the weighted control of the transport terms. 


\subsection{The action--density system and its asymptotics}\label{subsec:feedback}
The action equation determines the correction through the evolving density. To identify its asymptotic coefficients, we also need to understand how the action changes that density. Define the residual currents
\begin{equation}\label{eq:current}
    \mathcal J_\hs:=\Im(\overline{h_\hs} \nabla_y h_\hs),\qquad \mathcal J_\ks:=\int_{\R^3}\eta\,h_\ks^2(t,y,\eta)\,d\eta.
\end{equation}
Computing the density evolution in each corrected equation gives the same continuity law:
\begin{equation}\label{eq:continuity-m}
    \partial_tm_k -\frac1{t^2}\nabla_y\cdot(m_k\nabla_y \theta_k)=-\frac{1}{t^2}\nabla_y \cdot\mathcal J_k, \qquad k\in\{\hs,\ks\},
\end{equation}
which agrees with \eqref{eq:density}. Together with the action equation \eqref{eq:theta}, this is the action--density system introduced in \eqref{eq:action_density}. The density supplies the potential driving the action, while the action gradient determines the leading density flux. The residual current records the part of this flux that depends on the particular model. 

The two fluxes have different asymptotic roles. The uniform Sobolev and momentum-weighted bounds for the modified profiles to be established in Section \ref{sec:boot} give
{
\[
\sup_{s\geq1}
\|\mathcal J_k(s)\|_{W^{N-1,1}_y\cap H^{N-1}_y}
\lesssim\varepsilon^2.
\]}
Since the residual-current term carries the factor $s^{-2}$, it gives a density contribution of order $t^{-1}$. After convolution with $V_\alpha$ and multiplication by $t^{-\alpha}$, its contribution to the action equation therefore has the integrable weight $t^{-1-\alpha}$. 

The action-driven flux, on the other hand, produces a slower density correction. Indeed, the leading growth $\Gamma_k=\mathcal O(t^{1-\alpha})$ gives a term of order $t^{-1-\alpha}$ in the density equation, which after integrating from $t$ to infinity is of order $t^{-\alpha}$. This density correction modifies the interaction potential and therefore contributes at order $t^{-2\alpha}$ to the action equation. This is the \emph{action--density feedback}: the action \emph{determines a density correction}, which in turn \emph{contributes to the next order of the action}. Its contribution has the same order as the quadratic action term, so \emph{both} must enter the next action coefficient. 

\medskip

We illustrate how this feedback works with the first correction for the scalar equations Hartree and Vlasov--Riesz (so $k=j$). Let $m_{k,\infty}$ denote the density associated with the limiting modified profile. With the convention in \eqref{eq:action_asymptotics}, we define
\begin{equation}\label{eq:zero_correction_action}
    \theta_{k,0}:=V_\alpha*m_{j,\infty},\qquad \Gamma_{k,0}:= \frac{\nabla_y\theta_{k,0}}{1-\alpha}.
\end{equation}
The leading action gradient is then $t^{1-\alpha}\Gamma_{k,0}$. To determine the corresponding density correction, we integrate the density equation towards the limiting density:
\begin{align*}
    m_k(t)-m_{k,\infty}=-\int_t^\infty \frac{1}{s^2} \nabla_y\cdot\left(m_k(s)\Gamma_k(s)\right)\,ds+\int_t^\infty \frac{1}{s^2}\nabla_y\cdot\mathcal J_k(s)\,ds.
\end{align*}
As discussed before, the residual-current integral is of smaller order and does not enter the correction. Substituting $m_{k,\infty}$ and $s^{1-\alpha}\Gamma_{k,0}$ into the first integral identifies the first density term $t^{-\alpha}m_{k,1}$, where
\[
m_{k,1}=-\frac1\alpha\nabla_y\cdot \left(m_{k,\infty}\Gamma_{k,0}\right).
\]
We now insert the density correction into the action equation. The correction to the source potential contributes $t^{-2\alpha}V_\alpha*m_{j,1}$, while the leading action gradient contributes half of $t^{-2\alpha}|\Gamma_{k,0}|^2$. Thus the next action coefficient is
\[
\theta_{k,1}=V_\alpha*m_{k,1}+\frac12|\Gamma_{k,0}|^2.
\]
The first term records the change in the source density, and the second records the quadratic effect of the action itself. The corresponding correction to the action is
\[
\Lambda_{2\alpha}(t)\theta_{k,1},
\qquad
\Lambda_{2\alpha}(t)=\int_1^t s^{-2\alpha}\,ds.
\]
It grows like $t^{1-2\alpha}$ when $\alpha<\frac12$ and becomes logarithmic at $\alpha=\frac12$. 

This computation reveals the order of the general construction for the correction. Previously obtained action and density coefficients \emph{first} determine the \emph{next density coefficient} through the flux $m_k\Gamma_k$. The \emph{new source-density} coefficient then combines with \emph{quadratic products} of the known action gradients to determine the \emph{next action} coefficient. Repeating this procedure produces the successive time weights $t^{-\alpha},t^{-2\alpha},t^{-3\alpha},\dots$ in the action equation. We retain every nonintegrable weight, \emph{stopping} once the remaining terms are integrable. When $1/\alpha$ is an integer, the last retained weight is $t^{-1}$, and its time integral gives the logarithmic term in \eqref{eq:action_asymptotics}. 

This construction also explains the role of the model-dependent currents $\mathcal{J}_k$. The residual currents and the complementary potential $\varphi_{k,\hi}$ influence the limiting densities through the full evolution. Once these densities are fixed, their remaining contributions enter only the integrable remainder in the action equation.

\medskip

To justify this construction, the density remainders must be controlled before convolution with $V_\alpha$: at the next step, they reappear in the product $m_k\Gamma_k$. Section \ref{sec:asymptotics} formulates the recursion and establishes these density estimates together with the corresponding action remainder estimates. This separates the identification of the coefficients from the derivative bounds needed to validate the expansion. 

After all nonintegrable terms have been extracted, let $\Theta_k$ denote the resulting finite action expansion. The remainder estimates give
\[
\theta_k(t)-\Theta_k(t)\quad\longrightarrow\quad \theta_{k,\mathrm{final}},
\]
with convergence of the spatial gradient as needed for the kinetic correction. This finite limit is absorbed into the limiting profile by a phase modulation for Hartree and a momentum translation for Vlasov--Riesz. 

\medskip

For Vlasov--Hartree, the same construction applies with $j\neq k$. At each order, the two density coefficients are obtained from the already determined coefficients of their respective fluxes. Each then supplies the source for the next action coefficient of the other component. The exchange of sources therefore preserves the recursive structure, allowing the coupled asymptotics to be constructed from the two limiting densities. Section \ref{sec:VH} combines this construction with the common estimates to establish modified scattering for the coupled system.


\section{Global bounds and profile convergence}\label{sec:boot}
In this section, we establish global bounds and convergence of the corrected profiles \eqref{eq:h} for the Hartree equation \eqref{Hartree} and the Vlasov–Riesz equation \eqref{VR}. We first prove the field and action estimates common to these two scalar equations, and then treat their profile estimates and asymptotics separately. The Vlasov–Hartree system \eqref{VH} will be considered in Section \ref{sec:VH}, where these arguments are applied with exchanged sources.

Let $N\geq 4$ be an integer, whose value in terms of $\alpha$ will be chosen in Section \ref{sec:asymptotics}. We suppress the component labels when considering either scalar equation. All spatial norms and derivatives are in the ray variable $y$, unless otherwise indicated. Implicit constants may depend on $\alpha$, $N$, and $\lambda$. 

\subsection{Common field and action estimates}\label{subsec:boot_common}
For either scalar equation, let $m$ denote its rescaled spatial density and write 
\[
\varphi=V_\alpha*m, \qquad \Gamma=\nabla_y\theta.
\]
The potential decomposition and the action are those defined in \eqref{eq:decomposition} and \eqref{eq:theta}. We begin the bootstrap argument at $t=1$. The short-time estimates needed to initialize the bootstrap will be discussed separately for the Hartree and Vlasov–Riesz equations later.

On a time interval $[1,T]$, we impose the bootstrap assumptions
\begin{equation}\label{boot}
\|m(t)\|_{H^N}\leq\varepsilon^2,\qquad\|\nabla_y \Gamma(t)\|_{H^{N+1-\alpha}}\leq\varepsilon^2t^{1-\alpha},
\end{equation}
where $0<\varepsilon_0<\varepsilon\leq1$ and $\varepsilon_0$ is the bound for the initial data in \eqref{eq:initial-Hartree} or \eqref{eq:initial-VR}. In particular, the conservation of mass for both scalar equations gives
\begin{equation}\label{eq:mass_conservation}
    \|m(t)\|_{L^1}\leq \varepsilon_0^2.
\end{equation}
We first estimate the potential in terms of the density.
\begin{lemma}\label{lem:est_varphi}
    For $t\geq 1$, and $m(t)\in L^1\cap H^N$, we have
    \begin{equation}\label{eq:nabla_varphi}
        \|\nabla_y\varphi(t)\|_{W^{N-1,\infty}}+\|\nabla_y\varphi_{\lo}(t)\|_{L^\infty}\lesssim\|m(t)\|_{L^1}+\|m(t)\|_{H^N},
    \end{equation}
    \begin{equation}\label{eq:nabla2_varphi}
        \|\nabla_y^2\varphi(t)\|_{H^{N+1-\alpha}}+\|\nabla_y^2\varphi_\lo(t)\|_{H^{N+1-\alpha}}\lesssim \|m(t)\|_{L^1}+\|m(t)\|_{H^N},
    \end{equation}
    and
    \begin{equation}\label{eq:gain_varphi_low}
    \|\nabla_y^2\varphi_\lo(t)\|_{H^{N+1}} \lesssim t^{\frac{\alpha}{2}}\left(\|m(t)\|_ {L^1} +\|m(t)\|_{H^N} \right).
    \end{equation}
The complementary potential satisfies
\begin{equation}\label{eq:varphi_hi}
    \|\varphi_{\hi}(t)\|_{H^N}\lesssim t^{-1}\left(\|m(t)\|_{L^1}+\|m(t)\|_{H^N} \right),
\end{equation}
and
\begin{equation}\label{eq:varphi_hi_derivative}
    \|\nabla_y\varphi_\hi(t)\|_{H^N} \lesssim t^{-1+\frac{\alpha}{2}}\|m(t)\|_{H^N}.
\end{equation}
The bounds for the unregularized potential $\varphi$ in \eqref{eq:nabla_varphi} and \eqref{eq:nabla2_varphi} hold for every $t\geq 0$.
\end{lemma}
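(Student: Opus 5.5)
The argument is Fourier-side throughout, using $\widehat{\varphi}(\xi)=c_\alpha\lambda|\xi|^{\alpha-3}\widehat m(\xi)$ and $\widehat{\varphi_\lo}=\beta_t\widehat\varphi$. Each estimate splits into low frequencies $|\xi|\leq1$, controlled by $\|\widehat m\|_{L^\infty}\leq\|m\|_{L^1}$, and high frequencies $|\xi|>1$, controlled by $\|m\|_{H^N}$.

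\textbf{Estimate \eqref{eq:nabla_varphi}.} For $\nabla\varphi$ in $W^{N-1,\infty}$, I will invoke Lemma \ref{lem:pre_convolution} rather than Fourier analysis. It gives
\[
\|\nabla\varphi\|_{W^{N-1,\infty}}\lesssim\|m\|_{W^{N-2,1}}+\|m\|_{W^{N-2,\infty}},
\]
which is not directly in terms of $L^1$ and $H^N$, since $W^{s,1}$ norms of derivatives are unavailable. So instead I split on the Fourier side:
\[
\|\nabla^{k}\varphi\|_{L^\infty}\lesssim\|\,|\xi|^{k+\alpha-3}\widehat m\,\|_{L^1_\xi}.
\]
For $1\leq k\leq N$, the region $|\xi|\leq1$ is integrable, since $k+\alpha-3>-3$, and contributes $\|m\|_{L^1}$. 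The region $|\xi|>1$ is bounded by Cauchy--Schwarz by $\|m\|_{H^N}$, because $2(k+\alpha-3-N)<-3$ when $k\leq N$. The same argument applies to $\nabla\varphi_\lo$, using $0<\beta_t\leq1$. This also covers all $t\geq0$ for $\varphi$.

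\textbf{Estimate \eqref{eq:nabla2_varphi}.} In $L^2$ we need
\[
\|\langle\xi\rangle^{N+1-\alpha}|\xi|^{\alpha-1}\widehat m\|_{L^2}.
\]
At low frequency the weight $|\xi|^{2\alpha-2}$ is integrable in three dimensions, so this part is bounded by $\|\widehat m\|_{L^\infty}\leq\|m\|_{L^1}$. At high frequency the weight is $\langle\xi\rangle^{N}$, giving $\|m\|_{H^N}$. Again $\beta_t\leq1$ handles $\varphi_\lo$.

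\textbf{Estimate \eqref{eq:gain_varphi_low}.} I use the pointwise bound
\[
\beta_t(\xi)\lesssim\min\{1,\,t^{\alpha/2}|\xi|^{-\alpha}\},
\]
so that $\langle\xi\rangle^{\alpha}\beta_t\lesssim t^{\alpha/2}$ for $t\geq1$. This upgrades the previous bound from $H^{N+1-\alpha}$ to $H^{N+1}$ at the cost of the factor $t^{\alpha/2}$.

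\textbf{Estimates \eqref{eq:varphi_hi} and \eqref{eq:varphi_hi_derivative}.} These are the delicate step. From \eqref{eq:beta_bound} we have $1-\beta_t\lesssim\min\{1,t^{-1}|\xi|^2\}$.
\begin{enumerate}
\item For \eqref{eq:varphi_hi}, take the bound $t^{-1}|\xi|^2$ everywhere. This gives the symbol $t^{-1}|\xi|^{\alpha-1}\langle\xi\rangle^N$. At low frequency it is square integrable and yields $\|m\|_{L^1}$; at high frequency it is $\lesssim t^{-1}\langle\xi\rangle^{N}$ and yields $\|m\|_{H^N}$.
\item For \eqref{eq:varphi_hi_derivative}, only $\|m\|_{H^N}$ may appear on the right, so low frequencies must be absorbed using $H^N$ and not $L^1$. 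The symbol is $(1-\beta_t)|\xi|^{\alpha-2}\langle\xi\rangle^N$, and we need
\[
\sup_\xi\,(1-\beta_t)|\xi|^{\alpha-2}\lesssim t^{-1+\alpha/2}.
\]
For $|\xi|\leq t^{1/2}$, the bound $t^{-1}|\xi|^{2}$ gives $t^{-1}|\xi|^{\alpha}\leq t^{-1+\alpha/2}$. For $|\xi|\geq t^{1/2}$, the bound $1$ gives $|\xi|^{\alpha-2}\leq t^{(\alpha-2)/2}=t^{-1+\alpha/2}$.
\end{enumerate}
Hence the supremum is $t^{-1+\alpha/2}$, and \eqref{eq:varphi_hi_derivative} follows by Plancherel.

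\textbf{Main obstacle.} The only subtle point is \eqref{eq:nabla_varphi} at top order $k=N$: the high-frequency integral requires $2(\alpha-3)<-3$, which holds since $\alpha<3/2$, so the argument closes.
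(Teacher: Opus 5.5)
Your proposal is correct and follows the paper's proof essentially step for step. Both work on the Fourier side, splitting at $|\xi|=1$ with $\|m\|_{L^1}$ controlling low and $\|m\|_{H^N}$ high frequencies, and both use $\langle\xi\rangle^{\alpha}\beta_t\le t^{\alpha/2}$ for the regularity gain and $1-\beta_t\lesssim\min\{1,t^{-1}|\xi|^2\}$ for the complementary potential. Your case split at $|\xi|=t^{1/2}$ for \eqref{eq:varphi_hi_derivative} is equivalent to the paper's interpolated bound $1-\beta_t\lesssim t^{-\kappa}|\xi|^{2\kappa}$ with $\kappa=1-\alpha/2$.
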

\begin{proof}
    The Fourier representation of the Riesz potential $V_\alpha$ gives
\[
\widehat\varphi(\xi)=c_\alpha\lambda|\xi|^{\alpha-3}\widehat m(\xi).
\]
At frequencies $|\xi|\leq1$, we use
$|\widehat m(\xi)|\lesssim\|m\|_{L^1}$. At frequencies $|\xi|>1$, we use the $H^N$ norm. In particular, Fourier inversion and Cauchy–Schwarz give
\begin{align*}
    \|\nabla_y\varphi(t)\|_{W^{N-1,\infty}}&\lesssim\|m(t)\|_{L^1}\int_{|\xi| \leq 1}|\xi|^{\alpha-2}\,d\xi+\int_{|\xi|>1} |\xi|^{\alpha-3}\langle\xi\rangle^N|\widehat m(t,\xi)|\,d\xi\\
    &\lesssim \|m(t)\|_{L^1} +\|m(t)\|_{H^N}.
\end{align*}
The same argument applies to $\nabla_y\varphi_\lo$, since $0<\beta_t\leq1$. Similarly,
\begin{align*}
    \|\nabla_y^2\varphi(t)\|_{H^{N+1-\alpha}}^2 &\lesssim\|m(t)\|_{L^1}^2 \int_{|\xi|\leq1}|\xi|^{2\alpha-2}\,d\xi+\int_{|\xi|>1}\langle\xi\rangle^{2N}|\widehat m(t,\xi)|^2\,d\xi\\
    &\lesssim\|m(t)\|_{L^1}^2+\|m(t)\|_{H^N}^2.
\end{align*}
This also bounds the low-frequency part. The additional regularity follows from trading regularity for time growth:
\begin{align*}
    \langle\xi\rangle^\alpha \beta_t(\xi)=\left(\frac{1+|\xi|^2}{1+t^{-1}|\xi|^2} \right)^{\frac{\alpha}{2}}=t^{\frac{\alpha}{2}}\left(\frac{1+|\xi|^2}{t+|\xi|^2}\right)^{\frac{\alpha}{2}} \leq t^{\frac{\alpha}{2}}, \qquad t\geq1.
\end{align*}
Hence, Plancherel’s theorem and \eqref{eq:nabla2_varphi} give
\begin{align*}
    \|\nabla_y^2\varphi_\lo(t)\|_{H^{N+1}}&=\left\|\langle\xi\rangle^\alpha\beta_t(\xi)\, \langle\xi\rangle^{N+1-\alpha}\widehat{\nabla_y^2\varphi}(t,\xi) \right\|_{L^2_\xi}\leq t^{\frac{\alpha}{2}}\|\nabla_y^2\varphi(t)\|_{H^{N+1-\alpha}}\\ 
    &\lesssim t^{\frac{\alpha}{2}} \left(\|m(t)\|_{L^1}+\|m(t)\|_{H^N}\right).
\end{align*}
For the complementary potential, \eqref{eq:beta_bound} yields
\begin{align*}
    \|\varphi_\hi(t)\|_{H^N}^2&\lesssim t^{-2}\int_{\R^3}\langle\xi\rangle^{2N} |\xi|^{2\alpha-2}|\widehat m(t,\xi)|^2\,d\xi\lesssim t^{-2}\left(\|m(t)\|_{L^1}^2+\|m(t)\|_{H^N}^2\right).
\end{align*}
Here the low-frequency singularity is square-integrable, while
$|\xi|^{\alpha-1}\leq1$ for $|\xi|\geq1$. Finally, interpolating \eqref{eq:beta_bound} gives
\begin{equation}
    1-\beta_t(\xi)\lesssim_\alpha t^{-\kappa}|\xi|^{2\kappa}, \qquad 0\leq\kappa\leq1.
\end{equation}
Taking $\kappa=1-\frac{\alpha}{2}$ gives
\[
|\xi|^{\alpha-2}\left(1-\beta_t(\xi)\right)
\lesssim t^{-1+\frac{\alpha}{2}},
\]
and hence, by Plancherel’s theorem,
\begin{align*}
    \|\nabla_y\varphi_\hi(t)\|_{H^N}&\lesssim \left(\int_{\R^3}\langle\xi\rangle^{2N} |\xi|^{2\alpha-4}\left(1-\beta_t(\xi) \right)^2 |\widehat m(t,\xi)|^2\,d\xi
    \right)^{1/2}\\
    &\lesssim t^{-1+\frac{\alpha}{2}}\left(\int_{\R^3}\langle\xi\rangle^{2N}|\widehat m(t,\xi)|^2\,d\xi\right)^{1/2}=t^{-1+\frac{\alpha}{2}}\|m(t)\|_{H^N}.
\end{align*}
\end{proof}

We now use these estimates to bound the actions. 
\begin{lemma}\label{lem:action_est}
    Under the bootstrap assumptions \eqref{boot}, for $1\leq t\leq T$, we have
\begin{equation}\label{eq:est_Gamma}
    \|\Gamma(t)\|_{L^\infty}+\|\nabla_y \Gamma(t)\|_{H^{N+1-\alpha}}\lesssim\int_1^t s^{-\alpha} \left(\|m(s)\|_{L^1}+\|m(s)\|_{H^N}\right)\,ds,
\end{equation}
and 
\begin{equation}\label{eq:est_nabla_Gamma}
    \|\nabla_y\Gamma(t)\|_{H^{N+1}}\lesssim \int_1^t s^{-\frac{\alpha}{2}}\left(\|m(s)\|_{L^1}+\|m(s)\|_{H^N}\right)\,ds.
\end{equation}
\end{lemma}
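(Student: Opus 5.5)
We differentiate the action equation \eqref{eq:theta} in $y$ and treat the result as a transport equation for $\Gamma=\nabla_y\theta$:
\[
\partial_t\Gamma-\frac{1}{t^2}(\Gamma\cdot\nabla_y)\Gamma=t^{-\alpha}\nabla_y\varphi_\lo,\qquad \Gamma(1)=0,
\]
where we used $\nabla_y\frac{|\Gamma|^2}{2}=(\nabla\Gamma)\Gamma=(\Gamma\cdot\nabla)\Gamma$, since $\nabla\Gamma$ is symmetric. This is a Burgers-type equation with transport velocity $-t^{-2}\Gamma$. Under \eqref{boot}, the coefficients $t^{-2}\|\nabla\Gamma\|_{L^\infty}\lesssim\varepsilon^2t^{-1-\alpha}$ are integrable in time. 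The plan is to derive differential inequalities for each norm and integrate them with Gronwall's inequality. The Gronwall factor is $\exp(C\varepsilon^2\int_1^\infty s^{-1-\alpha}ds)\lesssim1$, so it is harmless.

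\emph{Step 1 ($L^\infty$ bound).} Along the characteristics $\dot y=-t^{-2}\Gamma$, which are well defined because $\Gamma$ is Lipschitz, we have $\frac{d}{dt}\Gamma=t^{-\alpha}\nabla_y\varphi_\lo$. Hence
\[
\|\Gamma(t)\|_{L^\infty}\le\int_1^t s^{-\alpha}\|\nabla\varphi_\lo(s)\|_{L^\infty}ds,
\]
and \eqref{eq:nabla_varphi} bounds the integrand.

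\emph{Step 2 (Sobolev bound for $\nabla\Gamma$).} Set $W=\nabla_y\Gamma=\nabla^2\theta$ and apply $\langle D\rangle^{r}$ with $r=N+1-\alpha$ to the equation for $W$:
\[
\partial_tW-t^{-2}(\Gamma\cdot\nabla)W-t^{-2}W^2=t^{-\alpha}\nabla^2\varphi_\lo.
\]
Take the $L^2$ inner product with $\langle D\rangle^rW$ and argue term by term.
\begin{itemize}
\item For the transport term, integrate by parts: it contributes $t^{-2}\|\nabla\cdot\Gamma\|_{L^\infty}\|W\|_{H^r}^2$.
\item For its commutator, use the Kato--Ponce estimate Lemma \ref{lem:KP} with $a=\Gamma$ and $f=\nabla W$. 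This gives $\|[\langle D\rangle^r,\Gamma]\nabla W\|_{L^2}\lesssim\|\nabla\Gamma\|_{L^\infty}\|\nabla W\|_{H^{r-1}}+\|\nabla\Gamma\|_{H^{r-1}}\|\nabla W\|_{L^\infty}$. Both pieces are $\lesssim\|W\|_{W^{1,\infty}}\|W\|_{H^r}$, where $\|W\|_{W^{1,\infty}}\lesssim\|W\|_{H^r}$ by Sobolev embedding since $r-1>3/2$.
\item For the quadratic term $W^2$, use the product estimate \eqref{eq:pre_fractional_product}.
\end{itemize}
Altogether,
\[
\frac{d}{dt}\|W\|_{H^r}\lesssim t^{-2}\|W\|_{H^r}^2+t^{-\alpha}\|\nabla^2\varphi_\lo\|_{H^r}.
\]
The bootstrap assumption \eqref{boot} gives $t^{-2}\|W\|_{H^r}\lesssim\varepsilon^2t^{-1-\alpha}$, so Gronwall's inequality and \eqref{eq:nabla2_varphi} yield the bound on $\|\nabla\Gamma(t)\|_{H^{N+1-\alpha}}$ in \eqref{eq:est_Gamma}.

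\emph{Step 3 (the estimate \eqref{eq:est_nabla_Gamma}).} Repeat Step 2 with $r=N+1$ and force $t^{-\alpha}\nabla^2\varphi_\lo$. By \eqref{eq:gain_varphi_low} its $H^{N+1}$ norm costs $t^{\alpha/2}$, giving the weight $s^{-\alpha/2}$. The only new issue is that the coefficient norms in the energy inequality are now top order: $\|W\|_{H^{N+1}}$ itself, together with $\|\nabla\Gamma\|_{L^\infty}$ and $\|\nabla W\|_{L^\infty}$. The $L^\infty$ quantities are controlled by the $H^{N+1-\alpha}$ bound from Step 2, so the inequality remains linear in $\|W\|_{H^{N+1}}$ with integrable coefficient $C\varepsilon^2t^{-1-\alpha}$.

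The main obstacle is the commutator term at top regularity, where one factor carries $N+1$ derivatives on $\Gamma$. I would handle it precisely through the $\|\nabla a\|$ form of Lemma \ref{lem:KP}, which needs no $L^2$ bound on $\Gamma$ itself; this matters because only $\Gamma\in L^\infty$ is available. The top-order norm is then absorbed into the Gronwall term.

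A possible gap is that the energy method needs enough smoothness to justify differentiating $\|W\|_{H^r}$ in time. This follows from the regularity of $\varphi_\lo$ by a standard approximation argument, or one can derive an a priori bound for a smooth approximation and pass to the limit.
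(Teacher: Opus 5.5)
Your proposal is correct and follows essentially the paper's proof. You obtain the $L^\infty$ bound by integrating along the characteristics of $-t^{-2}\Gamma$, then run an $H^r$ energy estimate for $\nabla_y\Gamma$ using the Kato--Ponce commutator bound of Lemma~\ref{lem:KP}, the product estimate for $(\nabla_y\Gamma)^2$, the bootstrap-integrable coefficient $\lesssim\varepsilon^2t^{-1-\alpha}$, Gr\"onwall, and Lemma~\ref{lem:est_varphi} (with \eqref{eq:gain_varphi_low} producing the $s^{-\alpha/2}$ weight at $r=N+1$). The only differences are cosmetic: you write the Gr\"onwall coefficient as $\|\nabla_y\Gamma\|_{W^{1,\infty}}$ where the paper writes $\|\nabla_y\Gamma\|_{H^3}$, and you explicitly flag the approximation argument justifying the energy identity, which the paper leaves implicit.
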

\begin{proof}
    Differentiating the action equation \eqref{eq:theta} gives
    \begin{equation}\label{eq:PDE_Gamma}
        \partial_t\Gamma-t^{-2}(\Gamma\cdot\nabla_y) \Gamma=t^{-\alpha}\nabla_y\varphi_\lo,
    \end{equation}
    and 
    \begin{equation} \label{eq:PDE_nabla_Gamma}
        \partial_t \nabla_y\Gamma-t^{-2} \Gamma\cdot\nabla_y^2 \Gamma =t^{-2}(\nabla_y\Gamma)^2+t^{-\alpha} \nabla_y^2\varphi_\lo,
    \end{equation}
    where $(\nabla_y\Gamma)^2$ denotes the matrix product. Since $\Gamma(1)=0$, integration along the characteristics of $-t^{-2}\Gamma$ yields
    \begin{align*}
        \|\Gamma(t)\|_{L^\infty}\leq \int_1^t s^{-\alpha}\|\nabla_y\varphi_\lo(s)\|_{L^\infty}\,ds,
    \end{align*}
    which proves the $L^{\infty}$ part \eqref{eq:est_Gamma} by \eqref{eq:nabla_varphi}. 

    We next estimate $\nabla_y\Gamma$ in $H^{N+1-\alpha}$ and $H^{N+1}$. The commutator estimate in Lemma \ref{lem:KP} and Sobolev embedding give
    \begin{align*}
        \|[\langle D \rangle^r, \Gamma\cdot\nabla_y]\nabla_y \Gamma\|_{L^2}\lesssim \|\nabla_y\Gamma\|_{L^\infty}\|\nabla_y\Gamma\|_{H^r} +\|\nabla_y\Gamma\|_{H^{r-1}} \|\nabla_y^2 \Gamma\|_{L^\infty} \lesssim\|\nabla_y\Gamma\|_{H^3}\|\nabla_y\Gamma\|_{H^r},
    \end{align*}
    for both $r=N+1-\alpha$ and $r=N+1$, since either case has $r\geq 3$. Together with the product estimate for $(\nabla_y \Gamma)^2$ and integration by parts in the transport term, it gives
    \begin{align*}
        \frac{d}{dt}\|\nabla_y \Gamma(t)\|_{H^r} \lesssim t^{-2}\|\nabla_y\Gamma(t)\|_{H^3}\|\nabla_y \Gamma(t)\|_{H^r}+t^{-\alpha}\|\nabla_y^2\varphi_\lo(t)\|_{H^r}.
    \end{align*}
    The bootstrap assumption \eqref{boot} implies
    \[
    \int_1^T t^{-2}\|\nabla_y\Gamma(t)\|_{H^3}\,dt \leq\varepsilon^2\int_1^T t^{-1-\alpha}\,dt \leq\frac{\varepsilon^2}{\alpha}.
    \]
    Since $\nabla_y\Gamma(1)=0$, Gronwall’s inequality therefore yields
    \begin{align*}
        \|\nabla_y\Gamma(t)\|_{H^r}\lesssim \int_1^t s^{-\alpha}\|\nabla_y^2 \varphi_\lo(s)\|_{H^r}\,ds.
    \end{align*}
    Lemma \ref{lem:est_varphi} then proves the claimed estimates \eqref{eq:est_Gamma} and \eqref{eq:est_nabla_Gamma}. 
\end{proof}


\subsection{The Hartree equation} \label{subsec:boot_hartree}
We now consider the Hartree equation. For sufficiently small $\varepsilon_0$, local $H^1$ theory (see \cite{GVGWP}) gives a solution on $[0,1]$. We first explain how the spatial weights in the initial-data assumption \eqref{eq:initial-Hartree} provide the required profile regularity at $t=1$. 

For $0<t\leq1$, we define $g$ by the same transformation as in \eqref{eq:g}.  Recall  Lemma \ref{lem:galilean_profile} gives the identity
\begin{equation}\label{eq:Galilean_g}
    \partial_y^\mu g(t,y) = (-i)^{|\mu|}t^{\frac{3}{2}}e^{-it\frac{|y|^2}{2}}(G(t)^\mu u(t))(ty),
\end{equation}
and 
\begin{equation}
    \|\partial_y^\mu g(t)\|_{L_y^2} =\|G(t)^\mu u(t)\|_{L_x^2}.
\end{equation}
Thus \emph{spatial derivatives} of the ray profile correspond to \emph{Galilean derivatives} of the solution. Since $G(0)=x$, their initial values are precisely the spatial moments $x^\mu u_0$. In particular, we have
\begin{align*}
    \limsup_{t\to0^+}\|g(t)\|_{H^N} \lesssim\|\langle x\rangle^N u_0\|_{L^2}\lesssim\varepsilon_0.
\end{align*}
Recall that  the profile $g$ satisfies
\[
i\partial_tg+\frac{1}{2t^2}\Delta_yg=t^{-\alpha}\varphi g,\qquad\varphi= V_\alpha*|g|^2.
\]
A standard energy estimate with integration by parts and the reality of the potential then gives
\[
\partial_t \|g(t)\|_{H^N}\lesssim t^{-\alpha}
\|\nabla_y\varphi(t)\|_{W^{N-1,\infty}}\|g(t)\|_{H^N}\lesssim t^{-\alpha}\|g(t)\|_{H^N}^3,
\]
using the potential estimate in Lemma \ref{lem:est_varphi}. Hence, a standard continuity argument gives
\begin{equation}\label{eq:small_time_g}
    \sup_{0< t\leq 1}\|g(t)\|_{H^N}\lesssim\varepsilon_0.
\end{equation}
Since $\theta(1)=0$, we have $h(1)=g(1)$, and hence the same bound holds for the corrected profile. Recall from Subsection \ref{subsec:correction} that $m=|h|^2$ and that the corrected Hartree profile satisfies
\[
i\partial_t h +\frac{1}{2t^2}\Delta_y h-\frac{i}{t^2}\mathcal T_\theta h=t^{-\alpha}\varphi_{\hi}h, \qquad \mathcal{T}_\theta=\Gamma\cdot \nabla_y+\frac{1}{2}\nabla_y\cdot \Gamma.
\]
We now close the bootstrap \eqref{boot} for Hartree.
\begin{lemma}\label{lem:close_hartree}
    Under the bootstrap assumptions \eqref{boot} and initial-data condition \eqref{eq:initial-Hartree}, we have
    \begin{equation}\label{eq:improved_h}
        \sup_{1\leq t\leq T}\|h(t)\|_{H^N}
        \lesssim\varepsilon_0.
    \end{equation}
    Hence,
    \begin{equation}\label{eq:improved_m}
        \sup_{1\leq t\leq T}\|m(t)\|_{H^N} \lesssim\varepsilon_0^2.
    \end{equation}
    Moreover,
    \begin{equation}\label{eq:improved_Gamma}
        \|\Gamma(t)\|_{L^\infty}+ \|\nabla_y\Gamma(t)\|_{H^{N+1-\alpha}}\lesssim\varepsilon_0^2t^{1-\alpha},
    \end{equation}
    and 
    \begin{equation}\label{eq:improved_regularity_Gamma}
        \|\nabla_y\Gamma(t)\|_{H^{N+1}} \lesssim\varepsilon_0^2t^{1-\frac{\alpha}{2}}.
    \end{equation}
For sufficiently small $\varepsilon_0$, these estimates close the bootstrap \eqref{boot}. The residual current \eqref{eq:current} therefore satisfies
    \begin{equation}\label{eq:current_est}
        \sup_{t\geq1}\left(\|\mathcal{J}(t)\|_{W^{N-1,1}}+\|\mathcal J(t)\|_{H^{N-1}}\right)\lesssim \varepsilon_0^2.
    \end{equation}
\end{lemma}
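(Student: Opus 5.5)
The core of the argument is an $H^N$ energy estimate for the corrected profile $h$ on $[1,T]$, started from $\|h(1)\|_{H^N}=\|g(1)\|_{H^N}\lesssim\varepsilon_0$ by \eqref{eq:small_time_g}. Apply $\partial_y^\mu$, $|\mu|\leq N$, to \eqref{eq:PDE_h_H} and pair with $\partial_y^\mu h$, taking real parts of $-i$ times the equation. Three terms drop out or are harmless.
\begin{itemize}
\item The Laplacian term drops out after integration by parts.
\item The skew-adjointness of $\mathcal T_\theta$ removes the top-order transport term.
\item The reality of $\varphi_\hi$ removes the undifferentiated potential term.
\end{itemize}
What remains are commutators. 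We obtain
\[
\frac{d}{dt}\|h\|_{H^N}\lesssim t^{-2}\big\|[\partial_y^\mu,\mathcal T_\theta]h\big\|_{L^2}+t^{-\alpha}\big\|[\partial^\mu_y,\varphi_\hi]h\big\|_{L^2}.
\]

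For the first commutator, write $[\partial^\mu,\Gamma\cdot\nabla]h$ and $[\partial^\mu,\tfrac12\nabla\cdot\Gamma]h$. Use Lemma~\ref{lem:commutator}, which requires no $L^2$ bound on $\Gamma$ itself. It gives a bound by $\|\nabla_y\Gamma\|_{H^{N}}\|h\|_{H^N}$ for the transport part. Here $\nabla h\in H^{N-1}$ is paired against derivatives of $\Gamma$, so the pairing is arranged as $\|\nabla\Gamma\|_{H^{N-1}}\|\nabla h\|_{H^{N-1}}$. The divergence part needs at most $N+1$ derivatives of $\Gamma$, that is, $\nabla\Gamma\in H^N$. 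Both are controlled by $\|\nabla\Gamma\|_{H^{N+1-\alpha}}\leq\varepsilon^2t^{1-\alpha}$ from \eqref{boot}. This contributes $\varepsilon^2t^{-1-\alpha}\|h\|_{H^N}$, which is integrable.

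For the potential commutator, the Leibniz terms involve $\partial^\nu\varphi_\hi$ with $1\leq|\nu|\leq N$. This is handled by $\|\nabla\varphi_\hi\|_{H^{N-1}}$ and the product estimate, placing the lower-order factor in $L^\infty$ via Sobolev since $N\geq4$. Then \eqref{eq:varphi_hi_derivative} gives the weight $t^{-\alpha}t^{-1+\alpha/2}\varepsilon^2=\varepsilon^2t^{-1-\alpha/2}$, which is integrable. Gronwall then yields \eqref{eq:improved_h} with constant $e^{C\varepsilon^2}$.

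The main obstacle is exactly this derivative bookkeeping: we must make sure no term needs more than $H^{N+1-\alpha}$ of $\nabla\Gamma$, nor an undifferentiated $\varphi_\hi$ without decay. I expect it to be fine because $\mathcal T_\theta$ is first order and the only loss is one derivative on $\Gamma$.

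Given \eqref{eq:improved_h}, the density bound \eqref{eq:improved_m} follows from $m=|h|^2$ and the algebra property of $H^N$. Substituting \eqref{eq:improved_m} and \eqref{eq:mass_conservation} into Lemma~\ref{lem:action_est} and integrating $s^{-\alpha}$ and $s^{-\alpha/2}$ gives \eqref{eq:improved_Gamma} and \eqref{eq:improved_regularity_Gamma}. Since $\varepsilon_0^2\ll\varepsilon^2$ once we take, say, $\varepsilon=C\varepsilon_0$ with $\varepsilon_0$ small, the bootstrap \eqref{boot} closes and $T=\infty$.

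Finally, $\mathcal J=\Im(\bar h\nabla h)$. The $W^{N-1,1}$ bound comes from Cauchy--Schwarz on each Leibniz product of $\partial^\beta h$, $\partial^{\gamma}\nabla h$ with $|\beta|+|\gamma|\leq N-1$. The $H^{N-1}$ bound comes from the product estimate \eqref{eq:pre_fractional_product}, using $h\in H^N\subset L^\infty$ and $\nabla h\in H^{N-1}$.
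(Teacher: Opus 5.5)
Your proposal is correct and follows essentially the same route as the paper: an $H^N$ energy estimate for $h$ using Lemma \ref{lem:commutator} on the transport and potential commutators, then Gr\"onwall, then the algebra property for $m$, then Lemma \ref{lem:action_est} for the action bounds, and finally Leibniz plus Cauchy--Schwarz for the current. The only inessential difference is in the potential commutator: you use \eqref{eq:varphi_hi_derivative}, which gives the integrable weight $t^{-1-\alpha/2}$, while the paper uses \eqref{eq:varphi_hi} with mass conservation to get $t^{-1-\alpha}$.
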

\begin{proof}
    Since $\mathcal T_\theta$ is skew-adjoint and $\varphi_{\mathrm{hi}}$ is real-valued, they do not contribute to the evolution of $h$; hence
    \begin{equation}\label{eq:conserved_L2_h}
        \|h(t)\|_{L^2}=\|h(1)\|_{L^2}\lesssim\varepsilon_0.
    \end{equation}
    For $|\mu|\leq N$, the differentiated energy identity is
    \begin{align*}
        \frac{d}{dt}\|\partial_y^\mu h\|_{L^2}^2= 2t^{-2}\Re \langle[\partial_y^\mu,\mathcal T_\theta]h, \partial_y^\mu h\rangle +2t^{-\alpha}\Im \langle[\partial_y^\mu,\varphi_\hi]h,\partial_y^\mu h\rangle.
    \end{align*}
    We first decompose the transport commutator as
    \begin{align*}
        [\partial_y^\mu,\mathcal T_\theta]h=\sum_{j=1}^3[\partial_y^\mu,\Gamma_j]\partial_{y_j}h+\frac12 [\partial_y^\mu,\nabla_y\cdot\Gamma]h.
    \end{align*}
    Applying the commutator estimate in Lemma \ref{lem:commutator} to each term gives 
    \begin{equation}\label{eq:transport_commutator_est}
        \begin{split}
            \|[\partial_y^\mu,\mathcal T_\theta] h\|_{L^2}&\lesssim \|\nabla_y\Gamma\|_{H^{N-1}}\|\nabla_yh\|_{H^{N-1}}+\|\nabla_y(\nabla_y\cdot\Gamma)\|_{H^{N-1}} \|h\|_{H^{N-1}}\\
        &\lesssim \|\nabla_y\Gamma\|_{H^N}\|h\|_{H^N}.
        \end{split}
    \end{equation}
    Here, the additional derivative of $\Gamma$ comes from the divergence term in $\mathcal T_\theta$. Similarly, the potential commutator is bounded by
    \begin{align*}
        \|[\partial_y^\mu,\varphi_\hi]h\|_{L^2}\lesssim\|\nabla_y\varphi_\hi\|_{H^{N-1}}\|h\|_{H^{N-1}}\lesssim\|\varphi_\hi\|_{H^N}\|h\|_{H^N}.
    \end{align*}
    Using Lemma \ref{lem:est_varphi} and bootstrap assumption \eqref{boot}, we obtain
    \begin{align*}
        \partial_t \|h(t)\|_{H^N} \lesssim\left(t^{-2}\|\nabla_y\Gamma(t)\|_{H^N}+t^{-\alpha}\|\varphi_\hi(t)\|_{H^N}\right)\|h(t)\|_{H^N}\lesssim\varepsilon^2t^{-1-\alpha}\|h(t)\|_{H^N}.
    \end{align*}
    Gr\"onwall’s inequality gives
    \begin{align*}
        \|h(t)\|_{H^N}\lesssim\|h(1)\|_{H^N}\exp\left(C\varepsilon^2\int_1^t s^{-1-\alpha}\,ds\right) \lesssim\varepsilon_0,
    \end{align*}
    which proves \eqref{eq:improved_h}. Since $H^N(\mathbb R^3)$ is an algebra, we have
    \begin{align*}
        \|m(t)\|_{H^N}\lesssim\|h(t)\|_{H^N}^2\lesssim\varepsilon_0^2.
    \end{align*}
    Combining this improved density bound with conservation of mass \eqref{eq:mass_conservation}, we obtain
    \begin{align}\label{eq:density_est}
        \|m(s)\|_{L^1}+\|m(s)\|_{H^N}\lesssim\varepsilon_0^2, \qquad 1\leq s\leq T.
    \end{align}
    Inserting this into Lemma \ref{lem:action_est} gives
    \begin{align*}
        \|\Gamma(t)\|_{L^\infty}+\|\nabla_y\Gamma(t)\|_{H^{N+1-\alpha}}
        &\lesssim\varepsilon_0^2\int_1^t s^{-\alpha}\,ds \lesssim\varepsilon_0^2t^{1-\alpha},\\
        \|\nabla_y\Gamma(t)\|_{H^{N+1}}
        &\lesssim\varepsilon_0^2\int_1^t s^{-\frac{\alpha}{2}} \,ds\lesssim\varepsilon_0^2t^{1-\frac{\alpha}{2}}.
    \end{align*}
    Choosing $\varepsilon_0$ sufficiently small with respect to $\varepsilon$ to absorb the implicit constants, the estimates here improve both bootstrap bounds. Therefore, a standard continuity argument extends these estimates to solutions for all $t\geq 1$. 

Finally, for the current, \eqref{eq:current} and the algebra property of $H^{N-1}$ give 
    \begin{equation}
       \|\mathcal{J}\|_{H^{N-1}}\les \| h\nabla h\|_{H^{N-1}}\les \|h\|_{H^N}^2\les \varepsilon_0^2.
    \end{equation}
 Moreover,
 \begin{equation*}
     \begin{split}
         \|\mathcal{J}\|_{W^{N-1,1}}\les \sum_{|\mu|\leq N-1}\sum_{\nu\leq \mu}\|\pr_y^\nu h\|_{L^2}\|\pr_y^{\mu-\nu}\nabla_y h\|_{L^2}\les \varepsilon_0^2.
     \end{split}
 \end{equation*}
\end{proof}

\medskip

We next construct the limiting profile. A difference energy estimate controls the remaining free evolution at the cost of one derivative. Interpolation then gives convergence at every Sobolev order below $N$. 
\begin{proposition}\label{prop:hartree_h_convergence}
    There exists $h_\infty\in H^N$, with $\|h_\infty\|_{H^N}\lesssim\varepsilon_0$, such that 
    \begin{equation}\label{eq:hartree_h_convergence}
        \|h(t)-h_\infty\|_{H^{N-1}} \lesssim\varepsilon_0t^{-\alpha}.
    \end{equation}
    Hence, for every $s<N$, we also have
    \begin{equation}\label{eq:improved_convergence}
        \|h(t)-h_\infty\|_{H^s}\lesssim_s \varepsilon_0t^{-\alpha\min\{1,N-s\}}.
    \end{equation}
    Define
    \begin{equation}\label{eq:m_infty}
        m_\infty(y)=|h_\infty(y)|^2.
    \end{equation}
    Then 
    \begin{equation}
        \|m(t)-m_\infty\|_{H^{N-1}}+\|m(t)-m_\infty\|_{W^{N-1,1}}\lesssim \varepsilon_0^2t^{-\alpha}.
    \end{equation}
\end{proposition}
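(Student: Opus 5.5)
We plan to prove the Cauchy property of $h(t)$ in $H^{N-1}$ directly from the equation \eqref{eq:PDE_h_H}, using only the uniform bounds of Lemma \ref{lem:close_hartree}. Writing
\[
\partial_t h=\frac{i}{2t^2}\Delta_y h+\frac{1}{t^2}\mathcal T_\theta h-it^{-\alpha}\varphi_{\hi}h,
\]
we estimate each term on the right in $H^{N-2}$ or $H^{N-1}$. The free term gives $\|t^{-2}\Delta_y h\|_{H^{N-2}}\lesssim t^{-2}\varepsilon_0$, which loses two derivatives. To lose only one, we instead run a difference energy estimate. For $1\leq t_1<t_2$, the function $w=h(t_2)-h(t_1)$ is handled most simply through the fundamental theorem of calculus in $H^{N-1}$. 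The transport term satisfies $\|t^{-2}\mathcal T_\theta h\|_{H^{N-1}}\lesssim t^{-2}(\|\Gamma\|_{L^\infty}+\|\nabla_y\Gamma\|_{H^{N}})\|h\|_{H^N}\lesssim \varepsilon_0^3 t^{-1-\alpha}$ by \eqref{eq:improved_Gamma} and the product estimates. For $\Gamma$ itself in the top-order Leibniz term we use only $L^\infty$; the other factors carry derivatives of $\nabla_y\Gamma$ in $H^{N}$. The potential term satisfies $\|t^{-\alpha}\varphi_\hi h\|_{H^{N-1}}\lesssim t^{-\alpha}\|\varphi_\hi\|_{H^N}\|h\|_{H^N}\lesssim\varepsilon_0^3t^{-1-\alpha}$ by \eqref{eq:varphi_hi}. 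Both are integrable with tail $t^{-\alpha}$.

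The free Laplacian term is the main obstacle, since $\Delta_y h\in H^{N-2}$ only. We will use the conjugation $h\mapsto e^{-\frac{i}{2}\int_1^t s^{-2}ds\,\Delta_y}h$. Set $\tau(t)=1-t^{-1}$ and $k(t)=e^{-i\tau(t)\Delta_y/2}h(t)$, a unitary conjugation on every $H^s$. Then $\partial_t k$ involves only the transport and potential terms above, conjugated, so $\|\partial_t k\|_{H^{N-1}}\lesssim\varepsilon_0^3t^{-1-\alpha}$. Hence $k(t)\to k_\infty$ in $H^{N-1}$ with $\|k(t)-k_\infty\|_{H^{N-1}}\lesssim\varepsilon_0 t^{-\alpha}$. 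Since $\tau(t)\to1$, we define $h_\infty:=e^{i\Delta_y/2}k_\infty$. Then
\[
h(t)-h_\infty=e^{i\tau\Delta/2}(k(t)-k_\infty)+(e^{i\tau\Delta/2}-e^{i\Delta/2})k_\infty .
\]
The second piece is bounded in $H^{N-1}$ by $|1-\tau|\,\|k_\infty\|_{H^{N+1}}$, which is too costly. We therefore estimate it in $H^{N-1}$ by $t^{-1/2}\|k_\infty\|_{H^{N}}$, interpolating the phase $|e^{i(1-\tau)|\xi|^2/2}-1|\lesssim \min\{1,t^{-1}|\xi|^2\}\lesssim t^{-1/2}|\xi|$. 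Since $\alpha\leq1/2$, this is $\lesssim\varepsilon_0t^{-\alpha}$. The bound $\|h_\infty\|_{H^N}\lesssim\varepsilon_0$ follows from weak compactness and the uniform bound \eqref{eq:improved_h}, because weak limits in $H^N$ coincide with the strong $H^{N-1}$ limit.

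For \eqref{eq:improved_convergence} we interpolate. For $s\leq N-1$ the estimate is immediate. For $N-1<s<N$ we write $\|h-h_\infty\|_{H^s}\leq\|h-h_\infty\|_{H^{N-1}}^{N-s}\|h-h_\infty\|_{H^N}^{s-N+1}\lesssim\varepsilon_0t^{-\alpha(N-s)}$.

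Finally, we treat the density $m-m_\infty=\bar h(h-h_\infty)+(\bar h-\bar h_\infty)h_\infty$. The $H^{N-1}$ bound follows from the algebra property of $H^{N-1}$, since $N-1\geq3$. The $W^{N-1,1}$ bound follows from Leibniz and Cauchy–Schwarz, as in the current estimate \eqref{eq:current_est}: each term is bounded by $\|\partial^\nu h\|_{L^2}\|\partial^{\mu-\nu}(h-h_\infty)\|_{L^2}$ with $|\mu|\leq N-1$, giving $\lesssim\varepsilon_0^2t^{-\alpha}$.
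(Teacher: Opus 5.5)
Your argument is correct, but it handles the free Laplacian by a different route from the paper.

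\textbf{The paper's route.} The paper runs a difference energy estimate for $w(\tau)=h(\tau)-h(t)$, $\tau\geq t$. The term $\frac{1}{2\tau^2}\Delta_y w$ drops out by skew-adjointness. The frozen source $\frac{1}{2\tau^2}\Delta_y h(t)$ is paired with $w$ after moving one derivative, which costs $\|h(t)\|_{H^N}\|w(\tau)\|_{H^N}\lesssim\varepsilon_0^2$. Solving the resulting quadratic inequality gives
\[
\sup_{\tau\geq t}\|h(\tau)-h(t)\|_{H^{N-1}}\lesssim\varepsilon_0t^{-1/2}+\varepsilon_0^3t^{-\alpha}.
\]

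\textbf{Your route.} You pass to the interaction picture in the rescaled time $\tau(t)=\int_1^t s^{-2}\,ds=1-t^{-1}$, which removes the Laplacian exactly. The conjugated profile $k(t)$ then converges in $H^{N-1}$ at the purely cubic rate $\varepsilon_0^3t^{-\alpha}$. The one-derivative loss is isolated in the explicit multiplier $e^{i\tau\Delta_y/2}-e^{i\Delta_y/2}$, and the symbol bound $\min\{1,t^{-1}|\xi|^2\}\leq t^{-1/2}|\xi|$ reproduces the same $t^{-1/2}$ term. Both arguments use $\alpha\leq 1/2$ at exactly the same place, to absorb $t^{-1/2}$ into $t^{-\alpha}$.

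\textbf{Comparison.} Your version gives the explicit identification $h_\infty=e^{i\Delta_y/2}k_\infty$ and avoids the quadratic inequality. It also shows that the $t^{-1/2}$ term is a purely linear effect of the free evolution over the remaining rescaled time $t^{-1}$. The paper's version stays inside the energy framework used throughout Section \ref{sec:boot} and never needs the explicit propagator.

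\textbf{Two small points.}
\begin{itemize}
\item You use $\|k_\infty\|_{H^N}\lesssim\varepsilon_0$ before you have shown $h(t)\to h_\infty$. Obtain it directly instead: combine $\|k(t)\|_{H^N}=\|h(t)\|_{H^N}\lesssim\varepsilon_0$ with the strong $H^{N-1}$ convergence of $k(t)$ and weak lower semicontinuity.
\item Your opening suggestion to apply the fundamental theorem of calculus to $h$ itself in $H^{N-1}$ does not work as stated, since $\partial_t h\notin H^{N-1}$. The conjugation you introduce next supersedes it, so this causes no harm.
\end{itemize}
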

\begin{proof}
    By the Leibniz rule, 
    \begin{align*}
        \|\mathcal T_\theta h\|_{H^{N-1}} \lesssim\left(\|\Gamma\|_{L^\infty} +\|\nabla_y\Gamma\|_{H^{N-1}}\right) \|h\|_{H^N}.
    \end{align*}
    Here, the undifferentiated coefficient is estimated in $L^\infty$; all other terms contain at least one derivative of $\Gamma$. Lemma \ref{lem:est_varphi} and Lemma \ref{lem:close_hartree} therefore imply
    \begin{equation}\label{eq:small_term_h}
        t^{-2}\|\mathcal T_\theta h\|_{H^{N-1}}+t^{-\alpha} \|\varphi_\hi h\|_{H^{N-1}} \lesssim\varepsilon_0^3t^{-1-\alpha}.
    \end{equation}
    Fix $t\geq1$ and set $w(\tau)=h(\tau)-h(t)$ for $\tau\geq t$. Then
    \begin{equation}
        i\partial_\tau w +\frac{1}{2\tau^2}\Delta_y w+ \frac{1}{2\tau^2}\Delta_y h(t)=i\tau^{-2}\mathcal T_{\theta(\tau)}h(\tau)+\tau^{-\alpha}\varphi_\hi(\tau)h(\tau).
    \end{equation}
    The first term does not contribute in the $H^{N-1}$ energy evolution. For the second term, we integrate one derivative by parts and use the uniform $H^N$ bound:
    \begin{align*}
        \big| \langle\Delta_y h(t),w(\tau)\rangle_{H^{N-1}}\big|&= \big|\langle\nabla_y h(t),\nabla_y w(\tau)\rangle_{H^{N-1}} \big|\lesssim \|h(t)\|_{H^N}\|w(\tau)\|_{H^N} \lesssim\varepsilon_0^2.
    \end{align*}
    Consequently, using \eqref{eq:small_term_h}
    \begin{align*}
        \partial_\tau \|w(\tau)\|_{H^{N-1}}^2 \lesssim \varepsilon_0^2\tau^{-2}+\varepsilon_0^3\tau^{-1-\alpha} \|w(\tau)\|_{H^{N-1}}.
    \end{align*}
    Since $w(t)=0$, integration and taking the supremum over $t\leq\tau\leq S$ yield
    \begin{align*}
        \sup_{t\leq\tau\leq S}\|w(\tau)\|_{H^{N-1}}^2\lesssim\varepsilon_0^2t^{-1}+\varepsilon_0^3t^{-\alpha} \sup_{t\leq\tau\leq S}\|w(\tau)\|_ {H^{N-1}}.
    \end{align*}
    Solving this quadratic inequality, uniformly in $S$, we have
    \begin{equation*}
        \sup_{\tau\geq t}\|h(\tau)-h(t)\|_{H^{N-1}} \lesssim \varepsilon_0t^{-1/2}+\varepsilon_0^3t^{-\alpha}.
    \end{equation*}
    Hence   $h(t)$ is Cauchy in $H^{N-1}$ as $t\rightarrow \infty$. Its limit belongs to $H^N$ by the uniform $H^N$ bound and weak compactness. Letting $\tau\rightarrow\infty$  proves the first assertion, using $\alpha\leq1/2$.

    For $N-1<s<N$, Sobolev interpolation gives
    \begin{align*}
        \|h(t)-h_\infty\|_{H^s} \lesssim \|h(t)-h_\infty\|_{H^{N-1}}^{N-s} \|h(t)-h_\infty\|_{H^N}^{s-N+1}\lesssim_s \varepsilon_0t^{-\alpha(N-s)}.
    \end{align*}
    For $s\leq N-1$, \eqref{eq:improved_convergence} follows from \eqref{eq:hartree_h_convergence} since $\|\cdot \|_{H^s}\leq \|\cdot \|_{H^{N-1}}$ .

    Finally, the fact that $H^{N-1}$ is an algebra since $N-1>\frac{3}{2}$ and the Leibniz rule with Cauchy–Schwarz give
    \begin{align*}
        \|m(t)-m_\infty&\|_{H^{N-1}} +\|m(t)-m_\infty\|_{W^{N-1,1}} \\
        & \lesssim \big(\|h(t)\|_{H^{N-1}}+\|h_\infty\|_{H^{N-1}}\big) \|h(t)-h_\infty\|_{H^{N-1}}\lesssim\varepsilon_0^2t^{-\alpha}.
    \end{align*}
\end{proof}


\subsection{The Vlasov--Riesz equation}\label{subsec:boot_VR}
We now close the bootstrap \eqref{boot} for the Vlasov--Riesz equation and construct the limiting profile. We continue to suppress the kinetic subscript. As in the Hartree argument, we first control the corrected profile, then improve the density and action bounds. The additional difficulty is the momentum factor in the differentiated kinetic equation. We handle it by localizing the initial momentum and transporting each piece with the full Hamiltonian flow. We first rewrite \eqref{eq:PDE_h_K} in the Hamiltonian form:
\begin{equation}\label{eq:PDE_h_Hamiltonian}
    \begin{split}
        \pr_t h+\{h,\mathcal{K}\}=0, \quad  \K=t^{-2}\left[\frac{|\eta|^2}{2}-\Gamma(t,y)\cdot \eta\right]+t^{-\alpha}\varphi_{\hi}(t,y),
    \end{split}
\end{equation}
We also use the uncorrected equation \eqref{eq:g_K}, written as
\begin{equation}\label{eq:g-Hamiltonian}
    \begin{split}
     \pr_t g+\{g,\H\}=0, \quad \H=\frac{|\eta|^2}{2t^2}+t^{-\alpha}\varphi(t,y).
    \end{split}
\end{equation}
Here, the Poisson bracket is as in \eqref{eq:poisson}. Both Hamiltonian flows preserve phase-space volume. {For sufficiently small $\varepsilon_0$, standard weighted local theory (see \cite{HP26}) under the initial data assumption \eqref{eq:initial-VR} gives a solution on $[0,1]$ satisfying
\begin{equation}\label{eq:weighted_small_time}
    \begin{split}
        \sup_{0\leq t\leq 1} \Bigg(&\sum_{|\mu|+|\nu|\leq N-1} \big\|\la x-tv\ra ^{2N+10}\, \pr_x^{\mu}\pr^{\nu}_v f(t)\big\|_{L^\infty_{x,v}}\\
        &\quad\,\, +\sum_{|\mu|+|\nu|\leq N} \big\|\la x-tv\ra ^{2N+10}\, \pr_x^{\mu}\pr^{\nu}_v f(t)\big\|_{L^2_{x,v}}\Bigg)\lesssim\varepsilon_0.
    \end{split}
\end{equation}
The weight is adapted to free transport, and $(\partial_t+v\cdot\nabla_x+F\cdot \nabla_v)(x-tv)=-tF$, so the usual differentiated weighted estimates apply on this bounded interval. At $t=1$, we have
\[
g(1,y,\eta)=f(1,y,y+\eta).
\]
This volume-preserving change of variables sends $v-x$ to $\eta$, while $\partial_y$ and $\partial_\eta$ correspond to $\partial_x+\partial_v$ and $\partial_v$, respectively. Since $\theta(1)=0$, we have $h(1)=g(1)$ and therefore
\begin{equation}\label{eq:kinetic_initial_replacement}
    \sum_{|\mu|+|\nu|\leq N-1}\big\|\la \eta \ra^{2N+10}\partial_y^\mu\partial_\eta^\nu\,h(1)\big\|_{L^\infty}+\sum_{|\mu|+|\nu|\leq N} \big\|\la \eta \ra^{2N+10}\partial_y^\mu\partial_\eta^\nu\,h(1)\big\|_{L^2}\lesssim\varepsilon_0.
\end{equation}
}
\begin{lemma}\label{lem:propagation}
Under the bootstrap assumption \eqref{boot} and the initial-data assumption \eqref{eq:initial-VR}, there are constants $C,c>0$, depending only on $\alpha,N$ and
$\lambda$, such that, if $\varepsilon$ is sufficiently small and
$\varepsilon_0\leq c\varepsilon$, then
\begin{equation}\label{eq:kinetic_propagation}
 \begin{split}
 &\sum_{|\mu|+|\nu|\leq N-1}
 \big\|\la\eta\ra^4
       \partial_y^\mu\partial_\eta^\nu h(t)\big\|_{L^\infty}+\sum_{|\mu|+|\nu|\leq N}\big\|\la \eta\ra\partial_y^\mu\partial_\eta^\nu h(t)\big\|_{L^2}\leq C\varepsilon_0, \qquad 1\leq t\leq T.
 \end{split}
\end{equation}
In particular,
\begin{equation}\label{eq:kinetic_density_improvement}
 \|m(t)\|_{H^N}\leq C\varepsilon_0^2,
 \qquad 1\leq t\leq T.
\end{equation}
Moreover,
\begin{equation}\label{eq:kinetic_currents}
    \begin{split}
        \|\mathcal{J}(t)\|_{W^{N,1}\cap H^N}\les \varepsilon_0^2.
    \end{split}
\end{equation}

\end{lemma}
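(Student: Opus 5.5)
We propagate weighted derivative norms of $h$ along the Hamiltonian flow of $\K$ in \eqref{eq:PDE_h_Hamiltonian}, starting from \eqref{eq:kinetic_initial_replacement} at $t=1$. The basic tool is Lemma \ref{lem:hamiltonian_transport}. The vector field of $\K$ is
\[
\dot y=t^{-2}(\eta-\Gamma),\qquad \dot\eta=t^{-2}\nabla\Gamma\,\eta-t^{-\alpha}\nabla_y\varphi_\hi .
\]
Under \eqref{boot}, the coefficients satisfy
\[
t^{-2}\|\Gamma\|_{W^{1,\infty}}\lesssim\varepsilon^2t^{-1-\alpha},
\]
and $t^{-\alpha}\nabla\varphi_\hi$ is integrable by \eqref{eq:varphi_hi_derivative}, provided we take $N$ large enough for Sobolev embedding to $L^\infty$.

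\textbf{Momentum weight.} The difficulty is that $\dot\eta$ contains the momentum factor $t^{-2}\nabla\Gamma\,\eta$, so $\langle\eta\rangle$ is not transported. We have
\[
\frac{d}{dt}\langle\eta\rangle\lesssim \varepsilon^2t^{-1-\alpha}\langle\eta\rangle+t^{-\alpha}\|\nabla\varphi_\hi\|_{L^\infty}.
\]
Gronwall along characteristics shows that $\langle\eta(t)\rangle\simeq\langle\eta(1)\rangle$ uniformly, up to a factor $e^{C\varepsilon^2/\alpha}$. This is the content of ``localizing the initial momentum'': partition the initial data dyadically in $\langle\eta\rangle\sim 2^k$. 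Each piece stays at comparable momentum for all $t\geq1$, so momentum factors can be traded for the weight $\langle\eta(1)\rangle^{2N+10}$ available from the data. Summing in $k$ uses the surplus weight. This gives the zeroth-order bounds $\|\langle\eta\rangle^4h\|_{L^\infty}$ and $\|\langle\eta\rangle h\|_{L^2}$ by pure transport.

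\textbf{Derivatives.} Apply $\partial_y^\mu\partial_\eta^\nu$ to the equation. Using $\nabla\{f,g\}=\{\nabla f,g\}+\{f,\nabla g\}$, the commutator terms fall into three types:
\begin{itemize}
\item $\partial_\eta$ hitting $\K$ gives $t^{-2}\partial_y\cdot$. This lowers the $\eta$-derivative count, raises the $y$-count, and comes with weight $t^{-2}$.
\item $\partial_y$ hitting $\Gamma$ gives terms like $t^{-2}\partial^{\beta}\Gamma\,\partial h$ and $t^{-2}(\partial^\beta\nabla\Gamma)\eta\,\partial_\eta h$. Here $\|\partial^\beta\nabla\Gamma\|\lesssim\varepsilon^2t^{1-\alpha}$ in $H^{N+1-\alpha}$, which embeds into $L^\infty$ up to order $N-1$. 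The extra factor $\eta$ is absorbed by the momentum localization or by the weight hierarchy: we need $\langle\eta\rangle^4$ at the $L^\infty$ level and $\langle\eta\rangle$ at the $L^2$ level, and the data provides far more.
\item $\partial_y$ hitting $\varphi_\hi$ gives $t^{-\alpha}\partial^\beta\nabla\varphi_\hi\,\partial_\eta h$, which is integrable by \eqref{eq:varphi_hi} and \eqref{eq:varphi_hi_derivative}.
\end{itemize}
For the top order $N$ in $L^2$, we place the highest derivative on $\Gamma$ in $L^2$, using $\|\nabla\Gamma\|_{H^{N+1-\alpha}}$, and the lower-order factor of $h$ in weighted $L^\infty$. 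This is why the $L^\infty$ norms are carried only to order $N-1$. All coefficients then carry integrable weights $\lesssim\varepsilon^2t^{-1-\alpha}$, and Gronwall over the finite hierarchy closes with a bound $C\varepsilon_0$. The hierarchy is ordered by $|\mu|+|\nu|$ together with the number of $\eta$-derivatives, since the $\partial_\eta\to\partial_y$ transfer is triangular.

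\textbf{Density and current.} Write $m=\int h^2\,d\eta$ and $\mathcal J=\int\eta h^2\,d\eta$. Differentiating in $y$ up to order $N$ and applying Cauchy–Schwarz in $\eta$, with $\langle\eta\rangle^{-4}$ integrable, gives both bounds:
\begin{itemize}
\item $\|\partial^\mu m\|_{L^1}\lesssim\sum\|\langle\eta\rangle\partial^\nu h\|_{L^2}\|\langle\eta\rangle\partial^{\mu-\nu}h\|_{L^2}$;
\item for the $L^2$ norm, one factor is estimated in weighted $L^\infty$ (order $\leq N-1$) and the other in $L^2$, using the $\langle\eta\rangle^4$ weight to integrate out $\eta$. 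This yields \eqref{eq:kinetic_density_improvement} and \eqref{eq:kinetic_currents}.
\end{itemize}

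\textbf{Main obstacle.} The main obstacle is the unbounded momentum coefficient $t^{-2}\nabla\Gamma\,\eta$ at top order, combined with the growth $\Gamma\sim t^{1-\alpha}$. The plan relies on the dyadic momentum localization and the comparability of $\langle\eta\rangle$ along the flow to keep each weight loss finite and summable.
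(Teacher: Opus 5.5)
\textbf{There is a gap in the derivative hierarchy: it is not triangular.} Differentiate $\partial_t h+\{h,\mathcal K\}=0$.
\begin{itemize}
\item A $\partial_{\eta_i}$-derivative is forced by $-t^{-2}\partial_{y_i}h$. This is free streaming, with coefficient $t^{-2}$ and no factor $\varepsilon^2$; so your claim that all coefficients are $\lesssim\varepsilon^2t^{-1-\alpha}$ is already false here.
\item A $\partial_{y_i}$-derivative is forced, through the term $-t^{-2}\partial_{y_i}\Gamma\cdot\eta$ in $\partial_{y_i}\mathcal K$, by $t^{-2}\big((\nabla_y\partial_{y_i}\Gamma)\eta\big)\cdot\nabla_\eta h$. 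Its coefficient is $\varepsilon^2t^{-1-\alpha}|\eta|$; this is your own second bullet.
\end{itemize}
The two transfers form a closed loop at fixed order $|\mu|+|\nu|$, already for first derivatives, and each circuit costs a factor $\eta$. So no polynomial weight hierarchy closes: weights $\langle\eta\rangle^{w(|\nu|)}$ would need $w(|\nu|+1)\ge w(|\nu|)+1$ from the action term and $w(|\nu|+1)\le w(|\nu|)$ from the free term.

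Comparability of $\langle\eta\rangle$ on a dyadic piece $|\eta|\sim2^k$ does not rescue this. It turns $\eta$ into a factor $2^k$ inside the Gronwall exponent, not a one-time loss. With weights $\lambda^{|\nu|}$ the two coefficients become $\lambda t^{-2}$ and $\varepsilon^2 2^kt^{-1-\alpha}/\lambda$. Their integrals over $[1,\infty)$ add up to at least a constant times $\varepsilon 2^{k/2}$. The estimate for the $k$-th piece then carries a factor $\exp(c\,\varepsilon2^{k/2})$, which the polynomial decay $2^{-k(2N+10)}$ of the data cannot absorb.

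\textbf{The missing idea is the paper's split at $t\sim2^k$.}
\begin{itemize}
\item For $1\le t\le2^k$, propagate the uncorrected pieces $g_k(t,y,\eta)=h_k(t,y,\eta+\Gamma(t,y))$. Their Hamiltonian $\mathcal H=|\eta|^2/(2t^2)+t^{-\alpha}\varphi$ has no $\Gamma\cdot\eta$ term, so $\partial_y$-derivatives are forced only by $t^{-\alpha}\nabla_y^2\varphi\,\nabla_\eta g_k$, with no momentum factor. With weights $t^{|\nu|}$ every coefficient is $O(t^{-1})$. Gronwall then gives only polynomial growth $t^{2N+3}\le2^{k(2N+3)}$, which the initial weight absorbs. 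The chain rule returns to $h_k$ since $t^{-1}\|\partial_y^\beta\Gamma\|_{L^\infty}\lesssim1$.
\item For $t\ge2^k$, use $h_k$ with fixed weights $2^{k|\nu|}$. The factor $|\eta|\lesssim2^k$ is paid by the weight on the extra $\eta$-derivative. The free coefficient $2^kt^{-2}$ has integral at most $1$ over $[2^k,\infty)$, uniformly in $k$.
\end{itemize}

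\textbf{A second, smaller gap concerns $\varphi_\hi$.} The weighted $L^\infty$ estimates at order $N-1$ need $\nabla_y\varphi_\hi\in W^{N-1,\infty}$ with an integrable rate. Sobolev embedding from \eqref{eq:varphi_hi_derivative} gives only $W^{N-2,\infty}$, and enlarging $N$ does not help because the derivative count grows with it. The paper instead combines the kernel bound $\|(I-J(t))\nabla_yV_\alpha\|_{L^1}\lesssim t^{-1+\frac\alpha2}$ with $\|m\|_{W^{N-1,\infty}}\lesssim\varepsilon^2$. The latter comes from an auxiliary bootstrap assumption that the left side of \eqref{eq:kinetic_propagation} is at most $\varepsilon$.

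Your final density and current estimates are fine once \eqref{eq:kinetic_propagation} is established.
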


\begin{proof}
The differentiation rule for Poisson brackets gives
\begin{equation}\label{eq:bracket_commutation}
\begin{split}
 &\partial_t\big(\partial_y^\mu\partial_\eta^\nu h\big)
   +\big\{\partial_y^\mu\partial_\eta^\nu h,\K\big\}=-\sum_{\substack{\mu'\leq\mu,\ \nu'\leq\nu\\
 |\mu'|+|\nu'|>0}} \binom\mu{\mu'}\binom\nu{\nu'}\big\{\partial_y^{\mu-\mu'}\partial_\eta^{\nu-\nu'}h, \partial_y^{\mu'}\partial_\eta^{\nu'}\K\big\}.
\end{split}
\end{equation}
The identical formula holds with $h,\mathcal{K}$ replaced by $g,\H$. To prove \eqref{eq:kinetic_propagation}, we make the additional bootstrap assumption that its left-hand side is at most $\varepsilon$. By \eqref{eq:kinetic_initial_replacement}, this holds initially when $\varepsilon_0\leq c\varepsilon$ and $c$ is sufficiently small. Lemma \ref{lem:action_est}, \eqref{boot}, and conservation of mass \eqref{eq:mass_conservation} give
\begin{align*}
    \|\Gamma(t)\|_{L^\infty}+\|\nabla_y \Gamma(t)\|_{H^{N+1-\alpha}}&\lesssim\varepsilon^2 t^{1-\alpha}, \\
    \|\nabla_y\Gamma(t)\|_{H^{N+1}}&\lesssim \varepsilon^2 t^{1-\frac{\alpha}{2}}. 
\end{align*}
The second bound supplies the coefficient regularity needed at the highest spatial derivatives. Hence, for $|\mu|\leq N-1$
\begin{equation}
    \begin{split}
        |\pr_y^{\mu}m(t,y) |\leq \sum_{\nu\leq \mu}\binom{\mu}{\nu}\int |\pr_y^{\nu}h| |\pr_y^{\mu-\nu} h|d\eta \les \varepsilon^2\int \langle \eta\ra^{-8}d\eta \les \varepsilon^2.
    \end{split}
\end{equation}
Thus $\|m(t)\|_{W^{N-1,\infty}}\les \varepsilon^2$. For the complementary force, let $P_\ell$, $\ell\in\Z$ denote the homogeneous dyadic Fourier projections to $|\xi|\sim2^\ell$. Since
\[
\widehat{\nabla_yV_\alpha}(\xi)=c_\alpha\lambda\,i\xi\,|\xi|^{\alpha-3},
\qquad 1-\beta_t(\xi)\lesssim
\min\{1,t^{-1}|\xi|^2\},
\]
rescaling to a fixed annulus gives
\[
\big\|P_\ell(I-J(t))\nabla_y V_\alpha\big\|_{L^1} \lesssim 2^{(\alpha-2)\ell}\min\{1,t^{-1}2^{2\ell}\}.
\]
Indeed, the rescaled symbol and its derivatives through order four are bounded by the right-hand side, so the kernel estimate follows by integration by parts in Fourier inversion. Summing below and above the frequency $t^{1/2}$, we obtain
\begin{align*}
    \big\|(I-J(t))\nabla_yV_\alpha\big\|_{L^1}\lesssim t^{-1}\sum_{2^\ell\leq t^{1/2}}2^{\alpha\ell} +\sum_{2^\ell>t^{1/2}}2^{(\alpha-2)\ell}\lesssim t^{-1+\frac{\alpha}{2}},
\end{align*}
where both geometric series converge since $0<\alpha<2$. Young's inequality therefore gives
\begin{equation}\label{eq:phi-high-oo}
    \begin{split}
        \|\nabla_y \varphi_{\hi}\|_{W^{N-1,\infty}}\leq \| (I-J(t))\nabla_y V_{\alpha}\|_{L^1}\|m\|_{W^{N-1,\infty}}\les \varepsilon^2 t^{-1+\frac{\alpha}{2}}.
    \end{split}
\end{equation}
To control the momentum factors produced by differentiating $\mathcal K$, we localize the initial data in $\eta$, then transport each piece with the same Hamiltonian. Choose a smooth partition of unity $\sum_{j\geq 0}\chi_j(\eta)=1$\footnote{Here $j\geq 0$ is a dyadic index; the component labels remain suppressed.} with $\chi_0$ supported in $|\eta|\leq 2$ and $\chi_j$ supported in $2^{j-1}\leq |\eta|\leq 2^{j+1}$ for $j \geq 1$, such that 
\[
|\partial_\eta^\mu \chi_j(\eta)|\lesssim_\mu 2^{-j|\mu|}.
\]
Define
\begin{equation*}
    \begin{split}
        \pr_t h_j+\{h_j,\K\}=0,\quad h_j(1,y,\eta)=\chi_j(\eta)h(1,y,\eta).
    \end{split}
\end{equation*}
Linearity with these fixed coefficients gives $h=\sum_{j\geq 0} h_j$. Note that all pieces use the fields generated by the full density $m$. Undoing the momentum translation, we also have
\begin{equation*}
    \begin{split}
        \pr_t g_j+\{g_j,\H\}=0, \quad g_j(t,y,\eta)=h_j(t,y,\eta+\Gamma(t,y)), \quad g_j(1)=h_j(1).
    \end{split}
\end{equation*}
Along any characteristic of $\K$,
\[
h_j(t,y(t),\eta(t))=\chi_j(\eta(1))h(1,y(1),\eta(1)).
\]
The initial derivative identity is
\[
 \partial_y^\mu\partial_\eta^\nu h_j(1) =\sum_{\nu'\leq\nu}\binom{\nu}{\nu'}
   (\partial_\eta^{\nu'}\chi_j)
   \partial_y^\mu\partial_\eta^{\nu-\nu'}h(1).
\]
Thus \eqref{eq:kinetic_initial_replacement} gives, for every
$j\geq0$,
\begin{equation}\label{eq:localized_initial}
\begin{split}
 \sum_{|\mu|+|\nu|\leq N-1}
   \|\partial_y^\mu\partial_\eta^\nu h_j(1)\|_{L^\infty}+\sum_{|\mu|+|\nu|\leq N}\|\partial_y^\mu\partial_\eta^\nu h_j(1)\|_{L^2}\leq C\varepsilon_0\,2^{-j(2N+10)}.
\end{split}
\end{equation}
By \eqref{eq:est_Gamma}, \eqref{boot} and \eqref{eq:phi-high-oo},
\begin{equation}\label{eq:localized_support_ode}
 \left|\frac{d\eta}{dt}\right|
 \leq |\nabla_y\K|\les \varepsilon^2t^{-1-\alpha}|\eta|
       +C\varepsilon^2t^{-1-\frac{\alpha}{2}}.
\end{equation}
For a characteristic issuing from $\operatorname{supp}h_j(1)$,
\begin{equation}\label{eq:localized_support}
\begin{split}
 |\eta(t)|&\leq\left(2^{j+1}
   +C\varepsilon^2\int_1^t s^{-1-\frac{\alpha}{2}}\,ds\right)
 \exp\left(C\varepsilon^2\int_1^t s^{-1-\alpha}\,ds\right)\leq C2^j,\quad 1\leq t\leq T.
\end{split}
\end{equation}
Consequently,
\begin{equation}\label{eq:localized_support_both}
\begin{aligned}
 \operatorname{supp}h_j(t) &\subset\{|\eta|\leq C2^j\},&&1\leq t\leq T,\\
 \operatorname{supp}g_j(t)&\subset\{|\eta|\leq C2^j+\|\Gamma(t)\|_{L^\infty}\}
 \subset\{|\eta|\leq C2^j\},&&1\leq t\leq\min\{2^j,T\}.
\end{aligned}
\end{equation}
These inclusions also hold for every derivative of the profiles.

\medskip
\noindent\textbf{Case I}:
Suppose $1\leq t\leq\min\{2^j,T\}$. We use the equation for $g_j$ on this interval. The weights $t^{|\nu|}$ supply a factor $t^{-1}$ whenever differentiation of the force replaces a spatial derivative by a momentum derivative. The free commutators then produce only polynomial growth, which can be absorbed by the initial momentum weights up to $t=2^j$. Writing $e_i$ for the standard coordinate vectors, \eqref{eq:g-Hamiltonian} gives
\begin{equation}\label{eq:early_weighted_equation}
\begin{split}
\partial_t\big(t^{|\nu|}\partial_y^\mu\partial_\eta^\nu g_j\big) +\big\{t^{|\nu|}\partial_y^\mu\partial_\eta^\nu g_j,\mathcal H\big\}&=|\nu|t^{|\nu|-1}\partial_y^\mu\partial_\eta^\nu g_j-t^{|\nu|-2}\sum_{i:\nu_i>0}\nu_i\partial_y^{\mu+e_i}\partial_\eta^{\nu-e_i}g_j\\
& \quad-t^{|\nu|-\alpha}\sum_{0<\mu'\leq\mu}\binom{\mu}{\mu'}\big\{\partial_y^{\mu-\mu'}\partial_\eta^\nu g_j,          \partial_y^{\mu'}\varphi\big\}.
\end{split}
\end{equation}
For $p=\infty$, $|\mu|+|\nu|\leq N-1$, and for $p=2$,
$|\mu|+|\nu|\leq N$, Lemma \ref{lem:hamiltonian_transport} therefore gives
\begin{equation}\label{eq:early_lemma21}
\begin{split}
 t^{|\nu|}
   \|\partial_y^\mu\partial_\eta^\nu g_j(t)\|_{L^p}
 &\leq\|\partial_y^\mu\partial_\eta^\nu h_j(1)\|_{L^p} +|\nu|\int_1^t s^{|\nu|-1}\|\partial_y^\mu\partial_\eta^\nu g_j(s)\|_{L^p}\,ds\\
& \quad+\sum_{i:\nu_i>0}\nu_i      \int_1^t s^{|\nu|-2} \|\partial_y^{\mu+e_i}\partial_\eta^{\nu-e_i}g_j(s)\|_{L^p}\,ds \\
& \quad+\sum_{0<\mu'\leq\mu}\binom{\mu}{\mu'}\int_1^t s^{|\nu|-\alpha} \big\|\big\{\partial_y^{\mu-\mu'}\partial_\eta^\nu g_j(s),         \partial_y^{\mu'}\varphi(s)       \big\}\big\|_{L^p}\,ds.
\end{split}
\end{equation}
For $1\leq|\mu'|\leq N-1$, using \eqref{eq:nabla_varphi}, the bracket source satisfies 
\begin{equation}\label{eq:early_potential_source}
\begin{split}
 s^{|\nu|-\alpha}\big\|\big\{\partial_y^{\mu-\mu'}\partial_\eta^\nu g_j,\partial_y^{\mu'}\varphi\big\}\big\|_{L^p} &\leq C s^{|\nu|-\alpha}\|\nabla_y\partial_y^{\mu'}\varphi\|_{L^\infty_y}   \sum_{i=1}^3\|\partial_y^{\mu-\mu'}\partial_\eta^{\nu+e_i}g_j\|_{L^p}\\
 &\leq C\varepsilon^2s^{-1-\alpha}   \sum_{i=1}^3 s^{|\nu|+1}\|\partial_y^{\mu-\mu'}\partial_\eta^{\nu+e_i}g_j\|_{L^p}.
\end{split}
\end{equation}
Here $|\mu-\mu'|+|\nu+e_i|\leq|\mu|+|\nu|$. 
The only remaining case is $p=2$, $|\mu|=N$, $\nu=0$,
$\mu'=\mu$. Here \eqref{eq:localized_support_both} and \eqref{eq:nabla2_varphi} give
\begin{equation}\label{eq:early_top_source}
\begin{split}
 s^{-\alpha}\|\{g_j,\partial_y^\mu\varphi\}\|_{L^2}&\leq C s^{-\alpha}\|\nabla_y\partial_y^\mu\varphi\|_{L^2_y}\left(\int_{|\eta|\leq C2^j}d\eta\right)^{1/2}\sum_{i=1}^3\|\partial_{\eta_i}g_j\|_{L^\infty}\\
 &\leq C\varepsilon^2s^{-1-\alpha}2^{3j/2}      \sum_{i=1}^3 s\|\partial_{\eta_i}g_j\|_{L^\infty}.
\end{split}
\end{equation}
For either $n=N-1$ or $n=N$, reindexing the free term yields
\begin{equation}\label{eq:early_free_sum}
\begin{split}
 \sum_{|\mu|+|\nu|\leq n}\sum_{i:\nu_i>0}   \nu_i s^{|\nu|-2}\|\partial_y^{\mu+e_i}\partial_\eta^{\nu-e_i}g_j\|_{L^p}&=s^{-1}\sum_{|\mu|+|\nu|\leq n}\left(\sum_{i:\mu_i>0}(\nu_i+1)\right) s^{|\nu|}\|\partial_y^\mu\partial_\eta^\nu g_j\|_{L^p}\\
 &\leq \frac{N+3}{s}\sum_{|\mu|+|\nu|\leq n}s^{|\nu|}\|\partial_y^\mu\partial_\eta^\nu g_j\|_{L^p},
\end{split}
\end{equation}
whereas
\begin{equation}\label{eq:early_weight_sum}
 \sum_{|\mu|+|\nu|\leq n}|\nu|s^{|\nu|-1}  \|\partial_y^\mu\partial_\eta^\nu g_j\|_{L^p}\leq\frac Ns\sum_{|\mu|+|\nu|\leq n}s^{|\nu|}\|\partial_y^\mu\partial_\eta^\nu g_j\|_{L^p}.
\end{equation}
Multiply the $L^\infty$ sum by $2^{3j/2}$ and add the $L^2$ sum. This factor accounts for the momentum volume in \eqref{eq:early_top_source}, so the highest differentiated field does not require an additional derivative of the profile. Equations \eqref{eq:early_lemma21}--\eqref{eq:early_weight_sum}
give the explicit integral inequality
\begin{equation}\label{eq:early_integral_inequality}
\begin{aligned}
 &2^{3j/2}\sum_{|\mu|+|\nu|\leq N-1}
t^{|\nu|}\|\partial_y^\mu\partial_\eta^\nu g_j(t)\|_{L^\infty}+\sum_{|\mu|+|\nu|\leq N} t^{|\nu|}\|\partial_y^\mu\partial_\eta^\nu g_j(t)\|_{L^2}\\     &\leq 2^{3j/2}\sum_{|\mu|+|\nu|\leq N-1}\|\partial_y^\mu\partial_\eta^\nu h_j(1)\|_{L^\infty}+\sum_{|\mu|+|\nu|\leq N}\|\partial_y^\mu\partial_\eta^\nu h_j(1)\|_{L^2}\\
 &\quad+\int_1^t\left(\frac{2N+3}{s}+C\varepsilon^2s^{-1-\alpha}\right)\\
 &\quad \quad \times\left[   2^{3j/2}\sum_{|\mu|+|\nu|\leq N-1}s^{|\nu|}\|\partial_y^\mu\partial_\eta^\nu g_j(s)\|_{L^\infty}    +\sum_{|\mu|+|\nu|\leq N} s^{|\nu|}\|\partial_y^\mu\partial_\eta^\nu g_j(s)\|_{L^2}  \right]ds.
\end{aligned}
\end{equation}
Since
\begin{equation}\label{eq:early_gronwall_factor}
\begin{split}
 \exp\left(\int_1^t\left(\frac{2N+3}{s}+C\varepsilon^2s^{-1-\alpha}\right)ds\right)= t^{2N+3}\exp\left(\frac{C\varepsilon^2}{\alpha}(1-t^{-\alpha})\right)\leq Ct^{2N+3},
\end{split}
\end{equation}
Gr\"onwall and \eqref{eq:localized_initial} imply
\begin{equation}\label{eq:localized_early}
\begin{split}
 &2^{3j/2}\sum_{|\mu|+|\nu|\leq N-1}
t^{|\nu|}\|\partial_y^\mu\partial_\eta^\nu g_j(t)\|_{L^\infty}+\sum_{|\mu|+|\nu|\leq N}   t^{|\nu|}\|\partial_y^\mu\partial_\eta^\nu g_j(t)\|_{L^2}\leq C\varepsilon_0\,2^{3j/2-j(2N+10)}t^{2N+3}.
\end{split}
\end{equation}
We convert the estimates to $h$. For $p=2,\infty$, the chain rule and invariance of these norms
under momentum translation give
\begin{equation}\label{eq:localized_chain_rule}
\begin{split}
 t^{|\nu|}  \|\partial_y^\mu\partial_\eta^\nu h_j(t)\|_{L^p}\leq C_N\sum_{\substack{0\leq k\leq|\mu|\\
\mu_0+\cdots+\mu_k=\mu\\               |\mu_\ell|\geq1,\ 1\leq\ell\leq k}}\left(\prod_{\ell=1}^k    \frac{\|\partial_y^{\mu_\ell}\Gamma(t)\|_{L^\infty}}{t}
 \right)
 \sum_{|\kappa|=k}t^{|\nu|+k}
       \|\partial_y^{\mu_0}\partial_\eta^{\nu+\kappa}g_j(t)\|_{L^p}.
\end{split}
\end{equation}
For $k=0$, the product equals $1$. By \eqref{eq:est_nabla_Gamma}, \eqref{boot} and Sobolev embedding, each term satisfies  
\[
 |\mu_0|+|\nu|+k\leq|\mu|+|\nu|,
 \qquad
 {\|\partial_y^{\mu_\ell}\Gamma(t)\|_{L^\infty}}
 \leq C\varepsilon^2t^{1-\frac{\alpha}{2}}
 \quad(1\leq|\mu_\ell|\leq N).
\]
Hence, with $n= N-1$ for $p=\infty$ and $n=N$ for $p=2$,
\begin{equation}\label{eq:localized_chain_sum}
 \sum_{|\mu|+|\nu|\leq n}t^{|\nu|}
    \|\partial_y^\mu\partial_\eta^\nu h_j(t)\|_{L^p} \leq C\sum_{|\mu|+|\nu|\leq n}t^{|\nu|}\|\partial_y^\mu\partial_\eta^\nu g_j(t)\|_{L^p}.
\end{equation}
In particular, \eqref{eq:localized_early} also holds with $g_j$
on the left replaced by $h_j$. If $2^j\leq T$, evaluation at
$t=2^j$ gives
\begin{equation}\label{eq:localized_matching}
\begin{split}
 2^{3j/2}\sum_{|\mu|+|\nu|\leq N-1}2^{j|\nu|}\|\partial_y^\mu\partial_\eta^\nu h_j(2^j)\|_{L^\infty}+\sum_{|\mu|+|\nu|\leq N}2^{j|\nu|}\|\partial_y^\mu\partial_\eta^\nu h_j(2^j)\|_{L^2} \\
 \leq C\varepsilon_0\,2^{3j/2-j(2N+10)+j(2N+3)}  =C\varepsilon_0\,2^{-11j/2}.
\end{split}
\end{equation}

\medskip

\noindent\textbf{Case II}. 
Suppose $2^j\leq t\leq T$. We now use the corrected equation with the fixed weights $2^{j|\nu|}$. The support bound $|\eta|\lesssim 2^j$ allows each momentum factor to be absorbed by the weight on one additional momentum derivative. The free commutator has coefficient $2^j t^{-2}$, whose time integral on this interval is uniformly bounded in $j$. 

Applying Lemma \ref{lem:hamiltonian_transport} to
\eqref{eq:bracket_commutation} for $h_j$ yields
\begin{equation}\label{eq:late_lemma21}
\begin{split}
 2^{j|\nu|}
   \|\partial_y^\mu\partial_\eta^\nu h_j(t)\|_{L^p}&\leq 2^{j|\nu|}\|\partial_y^\mu\partial_\eta^\nu h_j(2^j)\|_{L^p}\\
 &\quad+\sum_{\substack{\mu'\leq\mu,\ \nu'\leq\nu\\                |\mu'|+|\nu'|>0}}\binom{\mu}{\mu'}\binom{\nu}{\nu'}\int_{2^j}^t 2^{j|\nu|}\big\|\big\{
   \partial_y^{\mu-\mu'}\partial_\eta^{\nu-\nu'}h_j(s),\partial_y^{\mu'}\partial_\eta^{\nu'}\K(s)\big\}\big\|_{L^p}\,ds.
\end{split}
\end{equation}
The derivative range is the same as in \eqref{eq:early_lemma21}.
The Hamiltonian derivatives appearing here satisfy
\begin{equation}\label{eq:late_hamiltonian_derivatives}
\begin{aligned}
 \partial_y^{\mu'}\K
   &=-t^{-2}(\partial_y^{\mu'}\Gamma)\cdot\eta
       +t^{-\alpha}\partial_y^{\mu'}\varphi_{\mathrm{high}},    \qquad|\mu'|\geq1,\\
 \partial_y^{\mu'}\partial_{\eta_i}\K  &=t^{-2}\big(\mathbf1_{\{\mu'=0\}}\eta_i-\partial_y^{\mu'}\Gamma_i\big),\qquad|\mu'|\geq0,\\
\partial_y^{\mu'}\partial_{\eta_i}\partial_{\eta_k}\K   &=t^{-2}\mathbf1_{\{\mu'=0\}}\delta_{ik},\\
\partial_y^{\mu'}\partial_\eta^{\nu'}\K&=0,       \qquad|\nu'|\geq3.
\end{aligned}
\end{equation}
Terms with two or more momentum derivatives of $\K$ have zero Poisson bracket, since the corresponding Hamiltonian derivatives are constant or zero. For the free term,
\begin{equation}\label{eq:late_free_source}
 2^{j|\nu|}t^{-2}\big\|\{\partial_y^\mu\partial_\eta^{\nu-e_i}h_j,\eta_i\}\big\|_{L^p} =\frac{2^j}{t^2}\,2^{j(|\nu|-1)}\|\partial_y^{\mu+e_i}\partial_\eta^{\nu-e_i}h_j\|_{L^p}.
\end{equation}
For $1\leq|\mu'|\leq N-1$, the other bracket bounds are
\begin{equation}\label{eq:late_action_source}
\begin{split}
 2^{j|\nu|}\big\|\big\{\partial_y^{\mu-\mu'}\partial_\eta^\nu h_j,   (\partial_y^{\mu'}\Gamma)\cdot\eta\big\}\big\|_{L^p}&\leq C\|\partial_y^{\mu'}\Gamma\|_{L^\infty}\sum_{i=1}^3 2^{j|\nu|} \|\partial_y^{\mu-\mu'+e_i}\partial_\eta^\nu h_j\|_{L^p}\\
&\quad+C\|\nabla_y\partial_y^{\mu'}\Gamma\|_{L^\infty}      \sum_{i=1}^3 2^{j(|\nu|+1)}\|\partial_y^{\mu-\mu'}\partial_\eta^{\nu+e_i}h_j\|_{L^p},
\end{split}
\end{equation}
and
\begin{equation}\label{eq:late_potential_source}
\begin{split}
 2^{j|\nu|}\big\|\big\{\partial_y^{\mu-\mu'}\partial_\eta^\nu h_j,\partial_y^{\mu'}\varphi_{\hi}\big\}\big\|_{L^p}\leq C2^{-j}\|\nabla_y\partial_y^{\mu'}\varphi_\hi\|_{L^\infty}\sum_{i=1}^3 2^{j(|\nu|+1)}\|\partial_y^{\mu-\mu'}\partial_\eta^{\nu+e_i}h_j\|_{L^p}.
\end{split}
\end{equation}
When $\nu'=e_i$, and $0\leq |\mu'|\leq N-1$, the action term satisfies
\begin{equation}\label{eq:late_mixed_source}
\begin{split}
 2^{j|\nu|} \big\|\big\{\partial_y^{\mu-\mu'}\partial_\eta^{\nu-e_i}h_j,\partial_y^{\mu'}\Gamma_i\big\}\big\|_{L^p}\leq C\|\nabla_y\partial_y^{\mu'}\Gamma_i\|_{L^\infty}\sum_{k=1}^3 2^{j|\nu|}\|\partial_y^{\mu-\mu'}\partial_\eta^{\nu-e_i+e_k}h_j\|_{L^p}.
\end{split}
\end{equation}
Only $p=2$, $|\mu|=|\mu'|=N$, $\nu=\nu'=0$ requires a field coefficient in $L_y^2$. For this case,
\begin{equation}\label{eq:late_top_action_source}
\begin{split}
 t^{-2}\|\{h_j,(\partial_y^\mu\Gamma)\cdot\eta\}\|_{L^2}\leq Ct^{-2}\|\partial_y^\mu\Gamma\|_{L^\infty}           \|\nabla_y h_j\|_{L^2}+Ct^{-2}2^{3j/2}\|\nabla_y\partial_y^\mu\Gamma\|_{L^2_y}\big(2^j\|\nabla_\eta h_j\|_{L^\infty}\big),
\end{split}
\end{equation}
and
\begin{equation}\label{eq:late_top_potential_source}
 t^{-\alpha}  \|\{h_j,\partial_y^\mu\varphi_{\hi}\}\|_{L^2} \leq Ct^{-\alpha}2^{3j/2-j}\|\nabla_y\partial_y^\mu\varphi_\hi\|_{L^2_y}   \big(2^j\|\nabla_\eta h_j\|_{L^\infty}\big).
\end{equation}
Using \eqref{eq:phi-high-oo}, \eqref{eq:varphi_hi_derivative} and
\eqref{eq:est_nabla_Gamma}, together with \eqref{boot} and \eqref{eq:mass_conservation}, the summed source bounds are
\begin{equation}\label{eq:late_source_infinity}
\begin{split}
 \sum_{|\mu|+|\nu|\leq N-1}  \sum_{\substack{\mu'\leq\mu,\ \nu'\leq\nu\\
    |\mu'|+|\nu'|>0}}\binom{\mu}{\mu'}&\binom{\nu}{\nu'}2^{j|\nu|}\times\big\|\big\{\partial_y^{\mu-\mu'}\partial_\eta^{\nu-\nu'}h_j,\partial_y^{\mu'}\partial_\eta^{\nu'}\K\big\}\big\|_{L^\infty}\\
 &\leq C\big(2^jt^{-2}+\varepsilon^2t^{-1-\frac{\alpha}{2}}\big)\sum_{|\mu|+|\nu|\leq N-1}2^{j|\nu|}\|\partial_y^\mu\partial_\eta^\nu h_j\|_{L^\infty},
\end{split}
\end{equation}
and
\begin{equation}\label{eq:late_source_two}
\begin{split}
 \sum_{|\mu|+|\nu|\leq N}  \sum_{\substack{\mu'\leq\mu,\ \nu'\leq\nu\\
    |\mu'|+|\nu'|>0}}\binom{\mu}{\mu'}\binom{\nu}{\nu'}2^{j|\nu|}&\times\big\|\big\{\partial_y^{\mu-\mu'}\partial_\eta^{\nu-\nu'}h_j,\partial_y^{\mu'}\partial_\eta^{\nu'}\K\big\}\big\|_{L^2}\\
 &\leq C\big(2^jt^{-2}+\varepsilon^2t^{-1-\frac{\alpha}{2}}\big)\sum_{|\mu|+|\nu|\leq N}2^{j|\nu|}\|\partial_y^\mu\partial_\eta^\nu h_j\|_{L^2}\\
 &\quad\,\,\,\,\,+C\varepsilon^2t^{-1-\frac{\alpha}{2}}2^{3j/2}\sum_{|\mu|+|\nu|\leq N-1}2^{j|\nu|}\|\partial_y^\mu\partial_\eta^\nu h_j\|_{L^\infty}.
\end{split}
\end{equation}
Substitution into \eqref{eq:late_lemma21}, followed by summation,
gives
\begin{equation}\label{eq:late_integral_inequality}
\begin{aligned}
 &2^{3j/2}\sum_{|\mu|+|\nu|\leq N-1}     2^{j|\nu|}\|\partial_y^\mu\partial_\eta^\nu h_j(t)\|_{L^\infty}+\sum_{|\mu|+|\nu|\leq N}    2^{j|\nu|}\|\partial_y^\mu\partial_\eta^\nu h_j(t)\|_{L^2}\\
 &\leq
 2^{3j/2}\sum_{|\mu|+|\nu|\leq N-1}
2^{j|\nu|}\|\partial_y^\mu\partial_\eta^\nu h_j(2^j)\|_{L^\infty}+\sum_{|\mu|+|\nu|\leq N}
2^{j|\nu|}\|\partial_y^\mu\partial_\eta^\nu h_j(2^j)\|_{L^2}\\
&\quad+C\int_{2^j}^t\big(2^js^{-2}+\varepsilon^2s^{-1-\frac{\alpha}{2}}\big)\\
&\quad \quad \times\left[    2^{3j/2}\sum_{|\mu|+|\nu|\leq N-1}       2^{j|\nu|}\|\partial_y^\mu\partial_\eta^\nu h_j(s)\|_{L^\infty} +\sum_{|\mu|+|\nu|\leq N}2^{j|\nu|}\|\partial_y^\mu\partial_\eta^\nu h_j(s)\|_{L^2}    \right]ds.
\end{aligned}
\end{equation}
Here the Gr\"onwall exponent satisfies
\begin{equation}\label{eq:late_gronwall_factor}
\begin{split}
 \int_{2^j}^t   \big(2^js^{-2}+\varepsilon^2s^{-1-\frac{\alpha}{2}}\big)\,ds &=1-\frac{2^j}{t}  +\frac{2\varepsilon^2}{\alpha}      \big(2^{-j\frac{\alpha}{2}}-t^{-\frac{\alpha}{2}}\big)\leq1+\frac{2\varepsilon^2}{\alpha}.
\end{split}
\end{equation}
Therefore \eqref{eq:localized_matching} and Gronwall yield
\begin{equation}\label{eq:localized_late}
\begin{split}
 2^{3j/2}\sum_{|\mu|+|\nu|\leq N-1}2^{j|\nu|}\|\partial_y^\mu\partial_\eta^\nu h_j(t)\|_{L^\infty}+\sum_{|\mu|+|\nu|\leq N}    2^{j|\nu|}\|\partial_y^\mu\partial_\eta^\nu h_j(t)\|_{L^2} \leq C\varepsilon_0\,2^{-11j/2}.
\end{split}
\end{equation}
For $j=0$, the late interval starts at $1$ and
$2^{j|\nu|}=2^{3j/2}=1$.

\medskip
Finally, we sum the estimates obtained from Case I and Case II. For $t\leq2^j$, \eqref{eq:localized_early} and
\eqref{eq:localized_chain_sum} give the same bound as
\eqref{eq:localized_late} after dropping the derivative weights:
\[
 2^{3j/2-j(2N+10)}t^{2N+3}
 \leq2^{3j/2-j(2N+10)+j(2N+3)}
 =2^{-11j/2}.
\]
Consequently, for every $j\geq0$ and $1\leq t\leq T$,
\begin{equation}\label{eq:localized_uniform}
\begin{split}
 &2^{3j/2}\sum_{|\mu|+|\nu|\leq N-1}
     \|\partial_y^\mu\partial_\eta^\nu h_j(t)\|_{L^\infty}+\sum_{|\mu|+|\nu|\leq N}
     \|\partial_y^\mu\partial_\eta^\nu h_j(t)\|_{L^2}
 \leq C\varepsilon_0\,2^{-11j/2}.
\end{split}
\end{equation}
Since $\la\eta\ra^4\leq C2^{4j}$ on the support of every derivative of $h_j$, this implies
\begin{equation}\label{eq:localized_weighted_infinity}
\begin{split}
 \sum_{|\mu|+|\nu|\leq N-1}   \|\la\eta\ra^4     \partial_y^\mu\partial_\eta^\nu h_j(t)\|_{L^\infty}
 &\leq C2^{4j}   \sum_{|\mu|+|\nu|\leq N-1}     \|\partial_y^\mu\partial_\eta^\nu h_j(t)\|_{L^\infty}\\
 &\leq C\varepsilon_0\,2^{4j-3j/2-11j/2}   =C\varepsilon_0\,2^{-3j},
\end{split}
\end{equation}
and
\begin{equation}\label{eq:localized_uniform_two}
 \sum_{|\mu|+|\nu|\leq N}
   \|\partial_y^\mu\partial_\eta^\nu h_j(t)\|_{L^2}
 \leq C\varepsilon_0\,2^{-11j/2}.
\end{equation}
Thus both differentiated series converge in the required norms. The identity $h=\sum_{j\geq0}h_j$ and the triangle inequality give
\begin{equation}\label{eq:localized_final_sum}
\begin{aligned}
 &\sum_{|\mu|+|\nu|\leq N-1}   \|\la\eta\ra^4    \partial_y^\mu\partial_\eta^\nu h(t)\|_{L^\infty}+\sum_{|\mu|+|\nu|\leq N}   \|\la \eta\rangle\partial_y^\mu\partial_\eta^\nu h(t)\|_{L^2}\\
 &\lesssim\sum_{j=0}^{\infty}   \left[   \sum_{|\mu|+|\nu|\leq N-1}     \|\la\eta\ra^4       \partial_y^\mu\partial_\eta^\nu h_j(t)\|_{L^\infty}+ 2^j\sum_{|\mu|+|\nu|\leq N}       \|\partial_y^\mu\partial_\eta^\nu h_j(t)\|_{L^2}  \right]\\
 &\leq C\varepsilon_0 \sum_{j=0}^{\infty}\big(2^{-3j}+2^j\cdot 2^{-11j/2}\big)\leq C\varepsilon_0.
\end{aligned}
\end{equation}
For $\varepsilon_0\leq c\varepsilon$ with $c$ sufficiently
small, the last bound is at most $\varepsilon/2$, closing
the  bootstrap and proving
\eqref{eq:kinetic_propagation}.

Leibniz's rule and Cauchy--Schwarz in $\eta$ also give the density bound. In each product, we put the factor with fewer spatial derivatives in the momentum-weighted $L^\infty$ norm
and the other factor in $L^2$:
\[
 \|m(t)\|_{H^N}\leq C\left( \sum_{|\mu|+|\nu|\leq N-1}      \|\la\eta\ra^4          \partial_y^\mu\partial_\eta^\nu h(t)\|_{L^\infty}\right) \left( \sum_{|\mu|+|\nu|\leq N}      \|\partial_y^\mu\partial_\eta^\nu h(t)\|_{L^2}\right)\les \varepsilon_0^2.
\]
This proves \eqref{eq:kinetic_density_improvement}. 

Finally, the current estimates \eqref{eq:kinetic_currents} follow from Leibniz's rule and Cauchy--Schwarz. For the $H^N$ estimate, we place the factor with fewer derivatives in weighted $L^\infty$ and use $\lfloor N/2\rfloor\le N-2$ and $|\eta|\langle\eta\rangle^{-4}\in L^2_\eta$. Thus,
\begin{equation*}
\begin{split}
\|\mathcal J(t)\|_{H^N}&\lesssim\left(\sum_{|\nu|\le N-2}\|\langle\eta\rangle^4\partial_y^\nu h(t)\|_{L^\infty}\right)\|h(t)\|_{H^N}\lesssim\varepsilon_0^2,\\
\|\mathcal J(t)\|_{W^{N,1}}&\lesssim\sum_{|\mu|\le N}\sum_{\nu\le\mu}\|\langle\eta\rangle\partial_y^\nu h(t)\|_{L^2}\|\partial_y^{\mu-\nu}h(t)\|_{L^2}\lesssim\varepsilon_0^2.
\end{split}
\end{equation*}
This finishes the proof.

\end{proof}

We now insert the improved density estimate \eqref{eq:kinetic_density_improvement}, together with \eqref{eq:mass_conservation}, into Lemma \ref{lem:action_est}. This gives
\begin{align*}
    \|\Gamma(t)\|_{L^\infty}+\|\nabla_y \Gamma(t)\|_{H^{N+1-\alpha}}&\lesssim\varepsilon_0^2 \int_1^t s^{-\alpha}\,ds\,\lesssim \,\varepsilon_0^2 t^{1-\alpha}, \\
    \|\nabla_y\Gamma(t)\|_{H^{N+1}}&\lesssim \varepsilon_0^2 \int_1^t s^{-\frac{\alpha}{2}}ds\,\lesssim\, \varepsilon_0^2 t^{1-\frac{\alpha}{2}}.
\end{align*}
For $\varepsilon_0$ sufficiently small relative to $\varepsilon$, these estimates improve both assumptions in \eqref{boot}. Together with \eqref{eq:kinetic_propagation}, the continuation argument extends the bounds to all $t\geq 1$. We next construct the limiting profile and density.
\begin{proposition}\label{prop:convergence_h2}
Let $N\ge4$. There exists $h_\infty\in H^{N-1}\cap L^\infty$ such that
\begin{equation*}
\begin{split}\|h(t)-h_\infty\|_{H^{N-1}}+\|h(t)-h_\infty\|_{L^\infty}\lesssim\varepsilon_0t^{-\alpha}.
\end{split}
\end{equation*}
Consequently, we define $m_\infty(y):=\int h_\infty(y,\eta)^2\,d\eta$ and have
\begin{equation*}\begin{split}\|m(t)-m_\infty\|_{H^{N-1}}+\|m(t)-m_\infty\|_{W^{N-1,1}}\lesssim\varepsilon_0^2t^{-\alpha}.
\end{split}
\end{equation*}
In particular,
\begin{equation*}
\begin{split}\|m_\infty\|_{H^N}\le\liminf_{t\to\infty}\|m(t)\|_{H^N}\lesssim\varepsilon_0^2.
\end{split}
\end{equation*}
\end{proposition}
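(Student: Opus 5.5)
The plan is to show that $h(t)$ is Cauchy as $t\to\infty$ in $H^{N-1}_{y,\eta}$ and in $L^\infty$, using the Hamiltonian form \eqref{eq:PDE_h_Hamiltonian}, $\partial_t h=-\{h,\K\}$, together with the uniform bounds of Lemma \ref{lem:propagation}. The right-hand side splits as
\[
-\{h,\K\}=-t^{-2}\eta\cdot\nabla_y h+t^{-2}\big(\Gamma\cdot\nabla_y h-(\nabla_y\Gamma\,\eta)\cdot\nabla_\eta h\big)+t^{-\alpha}\nabla_y\varphi_\hi\cdot\nabla_\eta h .
\]
The free term is $\mathcal O(t^{-2})$ once we pay one $\eta$-weight and one $y$-derivative. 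The action terms are $\mathcal O(t^{-1-\alpha})$ by \eqref{eq:improved_Gamma} and \eqref{eq:improved_regularity_Gamma} (now valid for all $t\ge1$). The complementary force is $\mathcal O(t^{-1-\alpha/2})$ by \eqref{eq:phi-high-oo} and \eqref{eq:varphi_hi_derivative}. So every contribution is integrable in time, and integrating from $t$ to $\infty$ gives a rate no worse than $t^{-\alpha}$.

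For the $L^\infty$ norm I estimate $\|\partial_t h\|_{L^\infty}$ directly. The terms $\eta\cdot\nabla_y h$ and $(\nabla_y\Gamma\,\eta)\cdot\nabla_\eta h$ are controlled by $\|\la\eta\ra^4\nabla h\|_{L^\infty}\lesssim\varepsilon_0$ from \eqref{eq:kinetic_propagation}, and $\|\Gamma\|_{L^\infty},\|\nabla_y\Gamma\|_{L^\infty}\lesssim\varepsilon_0^2t^{1-\alpha}$ by Sobolev embedding. This gives
\[
\|\partial_t h\|_{L^\infty}\lesssim \varepsilon_0\big(t^{-2}+\varepsilon_0^2t^{-1-\alpha}+\varepsilon_0^2t^{-1-\alpha/2}\big),
\]
which is integrable, so $\|h(\tau)-h(t)\|_{L^\infty}\lesssim\varepsilon_0t^{-\alpha}$ for $\tau\ge t$.

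For $H^{N-1}$ I bound $\|\partial_y^\mu\partial_\eta^\nu\partial_t h\|_{L^2}$ with $|\mu|+|\nu|\le N-1$ in the same way, by Leibniz. The one real issue is the momentum factor $\eta$ multiplying up to $N$ derivatives of $h$ in $L^2$, while \eqref{eq:kinetic_propagation} provides only $\|\la\eta\ra\,\partial^{\le N}h\|_{L^2}$. That weight is exactly sufficient here. Derivatives falling on $\eta$ lower the derivative count. Derivatives falling on $\Gamma$ or $\varphi_\hi$ are treated as in \eqref{eq:late_action_source}--\eqref{eq:late_top_potential_source}: put the coefficient in $L^\infty$ when it carries at most $N-1$ derivatives. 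Otherwise put it in $L^2_y$ and the profile factor in weighted $L^\infty$, using that $\la\eta\ra^{-3}\in L^2_\eta$ absorbs the $\eta$ factor. Here we only need $N$ derivatives of $\nabla_y\Gamma$, which \eqref{eq:improved_Gamma} supplies. The outcome is $\|\partial_t h\|_{H^{N-1}}\lesssim\varepsilon_0(t^{-2}+t^{-1-\alpha/2}+t^{-1-\alpha})$, hence the limit $h_\infty$ exists with rate $t^{-\alpha}$. Unlike the Hartree case, no energy-difference trick is needed, because the kinetic free term costs a weight rather than two derivatives. I expect this weight and derivative bookkeeping at top order to be the main obstacle, and it is the reason for stopping at $N-1$.

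For the density, write
\[
m(t)-m_\infty=\int (h-h_\infty)(h+h_\infty)\,d\eta .
\]
The $W^{N-1,1}$ bound then follows from Leibniz and Cauchy--Schwarz in $(y,\eta)$, using the $H^{N-1}$ convergence and the uniform $H^{N-1}$ bounds. For the $H^{N-1}$ bound, in each product I put the factor with fewer derivatives in weighted $L^\infty$ and the other in $L^2$. A weighted bound on $h-h_\infty$ is needed for this, and I obtain it by interpolating the unweighted convergence against the uniform $\la\eta\ra^4$ bounds, which pass to $h_\infty$ by weak-$*$ limits. Alternatively, integrating $\partial_t h$ in weighted $L^\infty$ at lower order gives it directly, since \eqref{eq:kinetic_propagation} controls $\la\eta\ra^4\nabla^{\le N-1}h$ while the integrand needs one fewer weight. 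Finally, $m(t)\to m_\infty$ in $H^{N-1}$ and $\sup_t\|m(t)\|_{H^N}\lesssim\varepsilon_0^2$ give $m_\infty\in H^N$ with $\|m_\infty\|_{H^N}\le\liminf_{t\to\infty}\|m(t)\|_{H^N}$, by weak lower semicontinuity.
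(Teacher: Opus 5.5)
Your route matches the paper's: integrate $\|\partial_t h\|_{H^{N-1}}$ and $\|\partial_t h\|_{L^\infty}$ from the Hamiltonian form, using the single weight $\la\eta\ra$ at top order, then apply Leibniz for $m$ and weak lower semicontinuity. However, the claimed rate $t^{-\alpha}$ does not follow from the field bounds you invoke.

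You control the complementary force by \eqref{eq:phi-high-oo} and \eqref{eq:varphi_hi_derivative}, which carry the loss $t^{\alpha/2}$. The force term then contributes $\varepsilon_0^3t^{-1-\alpha/2}$ to $\|\partial_t h\|$, and
\[
\int_t^\infty s^{-1-\frac{\alpha}{2}}\,ds=\frac{2}{\alpha}\,t^{-\frac{\alpha}{2}},
\]
which decays more slowly than $t^{-\alpha}$. As written, you obtain only $\|h(t)-h_\infty\|\lesssim\varepsilon_0t^{-\alpha}+\varepsilon_0^3t^{-\alpha/2}$. This is not $\lesssim\varepsilon_0t^{-\alpha}$ uniformly in $t$, so the sentence ``integrating from $t$ to $\infty$ gives a rate no worse than $t^{-\alpha}$'' is false for that term.

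The missing observation, which the paper states explicitly, is that the convergence norm sits one derivative below the bootstrap level. You therefore only need $\nabla_y\varphi_\hi$ in $H^{N-1}$ and in $L^\infty$, and there \eqref{eq:varphi_hi} gives the undamaged rate: $\|\nabla_y\varphi_\hi\|_{H^{N-1}}+\|\nabla_y\varphi_\hi\|_{L^\infty}\lesssim\|\varphi_\hi\|_{H^N}\lesssim\varepsilon^2t^{-1}$. The force term is then $\mathcal O(\varepsilon_0\varepsilon^2t^{-1-\alpha})$, and everything integrates to $t^{-\alpha}$. The same caution applies to the action terms: use only $\|\nabla_y\Gamma\|_{H^{N+1-\alpha}}\lesssim\varepsilon_0^2t^{1-\alpha}$, and do not use the lossy $H^{N+1}$ bound \eqref{eq:improved_regularity_Gamma}, which you also cite.

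The density step has a similar problem in your first variant. Interpolating the unweighted $t^{-\alpha}$ convergence against the uniform $\la\eta\ra^4$ bound loses rate; for example, $\|\la\eta\ra^2A\|_{L^\infty}\le\|\la\eta\ra^4A\|_{L^\infty}^{1/2}\|A\|_{L^\infty}^{1/2}$ gives only $t^{-\alpha/2}$. Your second variant does work, once the $t^{-1}$ field bound above is used. That variant integrates $\partial_t h$ in $\la\eta\ra^3$-weighted $L^\infty$ at orders $\le\lfloor(N-1)/2\rfloor\le N-3$, where $\|\varphi_\hi\|_{H^N}$ controls the needed field derivatives.

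The paper needs no weights at all here. After Cauchy--Schwarz in $\eta$, it places the factor with fewer $y$-derivatives in $L^\infty_yL^2_\eta$ via $H^2_y(L^2_\eta)\hookrightarrow L^\infty_yL^2_\eta$, using $\lfloor(N-1)/2\rfloor+2\le N-1$ for $N\ge4$. This gives $\|m-m_\infty\|_{H^{N-1}}\lesssim\|h-h_\infty\|_{H^{N-1}}\big(\|h\|_{H^{N-1}}+\|h_\infty\|_{H^{N-1}}\big)$ directly from the $H^{N-1}$ convergence.
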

\begin{proof}
For convergence, the complementary force is needed at one fewer derivative. We can therefore use the full $t^{-1}$ from \eqref{eq:varphi_hi}. By \eqref{eq:kinetic_density_improvement}, \eqref{eq:mass_conservation}, and Sobolev embedding, we have
\begin{equation*}\begin{split}\|\nabla_y\varphi_{\hi}(t)\|_{L^\infty}+\|\nabla_y\varphi_{\hi}(t)\|_{H^{N-1}}\lesssim\varepsilon^2t^{-1}.\end{split}\end{equation*}
Applying Leibniz's rule to the Poisson bracket in \eqref{eq:PDE_h_Hamiltonian}, and using the Sobolev product estimates in the $y$ variable, we obtain the estimate below. The coefficients depend only on $y$, so the products are estimated in three dimensions with values in $L^2_\eta$. We place the undifferentiated $\Gamma$ in $L^\infty$; derivatives falling on the explicit factor $\eta$ give lower-order terms controlled by \eqref{eq:kinetic_propagation}.
\begin{equation*}
\begin{split}
\|\partial_t& h(t)\|_{H^{N-1}}=\|\{h,\mathcal K\}(t)\|_{H^{N-1}}\\
&\lesssim\left[t^{-2}\left(1+\|\Gamma(t)\|_{L^\infty}+\|\nabla_y\Gamma(t)\|_{H^{N-1}}\right)+t^{-\alpha}\|\nabla_y\varphi_{\hi}(t)\|_{H^{N-1}}\right]\sum_{|\mu|+|\nu|\le N}\|\langle\eta\rangle\partial_y^\mu\partial_\eta^\nu h(t)\|_{L^2}\\&\lesssim\varepsilon_0t^{-2}+\varepsilon_0\varepsilon^2t^{-1-\alpha}.
\end{split}
\end{equation*}
Here we used Lemma \ref{lem:action_est} to bound $\|\Gamma(t)\|_{L^\infty}+\|\nabla_y\Gamma(t)\|_{H^{N-1}}\lesssim\varepsilon^2t^{1-\alpha}$. Similarly, \eqref{eq:kinetic_propagation} gives
\begin{equation*}
\begin{split}\|\partial_t h(t)\|_{L^\infty}&=\|\{h,\mathcal K\}(t)\|_{L^\infty}\\&\lesssim\left[t^{-2}\left(1+\|\Gamma(t)\|_{L^\infty}+\|\nabla_y\Gamma(t)\|_{L^\infty}\right)+t^{-\alpha}\|\nabla_y\varphi_{\hi}(t)\|_{L^\infty}\right]\|\langle\eta\rangle\nabla_{y,\eta}h(t)\|_{L^\infty}\\
&\lesssim\varepsilon_0t^{-2}+\varepsilon_0\varepsilon^2t^{-1-\alpha}.
\end{split}
\end{equation*}
These bounds are integrable in time, so $h(t)$ is Cauchy in $H^{N-1}\cap L^\infty$. Therefore, there exists $h_\infty\in H^{N-1}\cap L^\infty$ such that
\begin{equation*}\begin{split}\|h(t)-h_\infty\|_{H^{N-1}}+\|h(t)-h_\infty\|_{L^\infty}&\lesssim\int_t^\infty\left(\varepsilon_0s^{-2}+\varepsilon_0\varepsilon^2s^{-1-\alpha}\right)\,ds\\&\lesssim\varepsilon_0t^{-\alpha}.
\end{split}
\end{equation*}
Interpolation between $L^2$ and $L^\infty$ then gives $\|h(t)-h_\infty\|_{L^p}\lesssim\varepsilon_0t^{-\alpha}$ for every $2\le p\le\infty$.

We next prove convergence of the density. For $|\mu|\le N-1$, Leibniz's rule gives
\begin{equation*}
\begin{split}\partial_y^\mu(m(t)-m_\infty)=\sum_{\nu\le\mu}\binom{\mu}{\nu}\int\partial_y^\nu(h(t)-h_\infty)\partial_y^{\mu-\nu}(h(t)+h_\infty)\,d\eta.
\end{split}
\end{equation*}
By Cauchy--Schwarz in $\eta$ and the embedding $H_y^2(L_\eta^2)\hookrightarrow L_y^\infty L_\eta^2$, placing the factor with fewer $y$ derivatives in $L_y^\infty L_\eta^2$, we obtain
\begin{equation*}
\begin{split}\|m(t)-m_\infty\|_{H^{N-1}}&\lesssim\|h(t)-h_\infty\|_{H^{N-1}}\left(\|h(t)\|_{H^{N-1}}+\|h_\infty\|_{H^{N-1}}\right)\\&\lesssim\varepsilon_0^2t^{-\alpha}.
\end{split}
\end{equation*}
This uses $\lfloor(N-1)/2\rfloor+2\le N-1$, which holds for $N\ge4$. Likewise, Cauchy--Schwarz in $(y,\eta)$ gives
\begin{equation*}
\begin{split}\|m(t)-m_\infty\|_{W^{N-1,1}}&\lesssim\sum_{|\mu|\le N-1}\sum_{\nu\le\mu}\|\partial_y^\nu(h(t)-h_\infty)\|_{L^2}\|\partial_y^{\mu-\nu}(h(t)+h_\infty)\|_{L^2}\\&\lesssim\varepsilon_0^2t^{-\alpha}.
\end{split}
\end{equation*}
Finally, the uniform $H^N$ bound \eqref{eq:kinetic_density_improvement}, the strong convergence of $m(t)$ in $H^{N-1}$, and weak lower semicontinuity imply
\begin{equation*}
\begin{split}\|m_\infty\|_{H^N}\le\liminf_{t\to\infty}\|m(t)\|_{H^N}\lesssim\varepsilon_0^2.
\end{split}
\end{equation*}
This finishes the proof.
\end{proof}


\section{Action expansions and modified scattering}\label{sec:asymptotics}
In this section, we identify the asymptotic action through its \emph{coupling} with the rescaled density. For either scalar equation, we suppress the component subscript and recall the action–density system:
\begin{equation}\label{eq:action--density}
\begin{aligned}
 \partial_t\theta-\frac1{2t^2}|\nabla\theta|^2 &=t^{-\alpha}\varphi_{\lo},\\
 \partial_tm-\frac1{t^2} \nabla\cdot(m\nabla\theta)&=-\frac1{t^2}\nabla\cdot\mathcal J,
 \qquad \theta(1)=0.
\end{aligned}
\end{equation}
This common action–density structure allows us to derive the asymptotic actions for both equations by the same argument. To index the density coefficients consistently in the recursive expansion, we write
\[
m_0:=m_\infty,
\]
which is the limiting density obtained in Subsections \ref{subsec:boot_hartree} and \ref{subsec:boot_VR}. 

In Subsection \ref{subsec:expansion}, we construct the successive action coefficients from the limiting density and justify the resulting expansion. We then apply this construction to the Hartree and Vlasov–Riesz equations in Subsections \ref{subsec:hartree-scatter} and \ref{subsec:VR-scatter}, respectively, completing the proofs of Theorems \ref{thm:Hartree} and \ref{thm:VR}. 

\subsection{The recursive expansion}\label{subsec:expansion}
Recall that
\[
L=\left\lfloor\frac1\alpha\right\rfloor-1,\qquad
(L+1)\alpha\le1<(L+2)\alpha,\qquad \Lambda_\beta(t)=\int_1^t s^{-\beta}ds.
\]
The recursion follows the action–density feedback described in Subsection \ref{subsec:feedback}. At each order, the known density flux determines the next density coefficient. Its potential then combines with the quadratic action term to determine the next action coefficient. 

\begin{proposition}\label{prop:expansion}
    Let $0<\alpha\leq \frac{1}{2}$, $N\in \N$ with $N\geq L+3$, and $0<\varepsilon\ll 1$. Assume that the real-valued density $m$, action $\theta$, and vector current $\mathcal J$ satisfy the scalar action--density system \eqref{eq:action--density}, with $\varphi_\lo=J(t)(V_\alpha*m)$. Suppose, for every $t\geq 1$, that
    \vspace{.2pt}
    \begin{subequations}\label{eq:assumption}
        \begin{gather}
        \|m(t)-m_{0}\|_{H^{N-1}} +\|m(t)-m_{0}\|_{W^{N-1,1}} \lesssim\varepsilon^2t^{-\alpha},\label{assumption:convergence}\\[.8em]
        \|m(t)\|_{H^N}+ \|m(t)\|_{W^{L,1}} +\|m_{0}\|_{H^N}+\|m_{0}\|_{W^{L,1}} \lesssim\varepsilon^2, \label{assumption:density}\\[.8em]
        \|\nabla_y\theta(t)\|_{L^\infty} +\|\nabla_y^2\theta(t)\|_{H^{N+1-\alpha}}\lesssim\varepsilon^2t^{1-\alpha}\label{assumption:action},\\[.8em]
        \|\mathcal J(t)\|_{W^{L,1}}+\|\mathcal J(t)\|_{W^{L,\infty}}\lesssim\varepsilon^2.\label{assumption:current}
    \end{gather}
    \end{subequations}
    Define
    \begin{equation}\label{eq:zero_action}
        \theta_0:=V_\alpha *m_0, \qquad \Gamma_0:=\frac{\nabla_y\theta_0}{1-\alpha}.
    \end{equation}
    For $1\leq n\leq L$, define recursively  
    \begin{equation}\label{eq:recurssion}
        \begin{split}
            m_n&:=-\frac{1}{n\alpha} \nabla_y\cdot \sum_{a+b=n-1}m_a\Gamma_b,\\
            \theta_n&:=V_\alpha *m_n +\frac{1}{2}\sum_{a+b=n-1} \Gamma_a\cdot \Gamma_b.
        \end{split}
    \end{equation}
 where for $1\leq n\leq L-1$,   
\begin{equation}\label{eq:Gamma_n}
        \Gamma_n:=\frac{\nabla_y \theta_n}{1-(n+1)\alpha},
    \end{equation}
     Define
    \begin{equation}\label{eq:Theta}
        \Theta(t)=\sum_{n=0}^L\Lambda_{(n+1)\alpha}(t)\,\theta_n.
    \end{equation}
    If $(L+1)\alpha<1$, then
    \begin{equation}\label{eq:decay_expansion}
        \big\|\partial_t (\theta(t)-\Theta(t))\big\|_{W^{1,\infty}}\lesssim\varepsilon^2 t^{-1-\alpha}+\varepsilon^4 t^{-(L+2)\alpha}.
    \end{equation}
    If $(L+1)\alpha=1$, then
    \begin{equation}\label{eq:decay_expansion_log}
        \big\|\partial_t (\theta(t)-\Theta(t))\big\|_{W^{1,\infty}}\lesssim\varepsilon^2 t^{-1-\alpha}+\varepsilon^4 t^{-1-\alpha}(1+\log t).
    \end{equation}
    In particular,  $\theta-\Theta$ converges in $W^{1,\infty}$ as $t\rightarrow \infty$.
\end{proposition}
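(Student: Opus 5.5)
We write $\partial_t\Theta=\sum_{n=0}^L t^{-(n+1)\alpha}\theta_n$ and compare it with the action equation $\partial_t\theta=t^{-\alpha}\varphi_\lo+\tfrac1{2t^2}|\nabla\theta|^2$. The plan is to prove two expansions by a joint induction on $n$, after first establishing regularity of the coefficients. The first is for the action gradient, with $\Gamma_L$ defined as in \eqref{eq:Gamma_n} when $(L+1)\alpha<1$ and with a logarithm when $(L+1)\alpha=1$:
\[
\nabla_y\theta(t)=\sum_{b=0}^{n-1}t^{1-(b+1)\alpha}\Gamma_b+R_n(t).
\]
The second is for the density:
\[
m(t)=\sum_{a=0}^{n}t^{-a\alpha}m_a+\rho_n(t).
\]
The target remainder sizes are $\|R_n(t)\|_{W^{1,\infty}}\lesssim\varepsilon^2\,t\,\mathfrak r$-type bounds of order $\varepsilon^2 t^{1-(n+1)\alpha}$ (up to logs and an $O(1)$ constant from the initial time), and $\|\rho_n(t)\|_{W^{N-1-n,1}\cap H^{N-1-n}}\lesssim\varepsilon^2 t^{-(n+1)\alpha}$. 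Both are valid while $(n+1)\alpha<1$. Each step costs one derivative, because $m_n$ involves a divergence. This is why $N\ge L+3$ is needed: after $L$ steps we still retain $W^{2,1}\cap H^2$ control, enough for $W^{1,\infty}$ of $\nabla(V_\alpha*\cdot)$ via Lemma \ref{lem:pre_convolution}. The regularity step shows inductively that $m_n\in W^{N-1-n,1}\cap H^{N-n}$ and $\Gamma_n\in W^{N-n,\infty}$ with norms $\lesssim\varepsilon^{2(n+1)}$. This follows from \eqref{eq:V_convolution}, the product structure, and assumption \eqref{assumption:density}.

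The density induction integrates \eqref{eq:action--density} from $t$ to $\infty$, using \eqref{assumption:convergence} to fix the limit $m_0$:
\[
m(t)-m_0=-\int_t^\infty s^{-2}\nabla\cdot(m\nabla\theta)\,ds+\int_t^\infty s^{-2}\nabla\cdot\mathcal J\,ds.
\]
The current term is $O(\varepsilon^2t^{-1})$ in $W^{L-1,1}\cap W^{L-1,\infty}$ by \eqref{assumption:current}, which is a lower order than any retained term since $n\alpha<1$. In the flux term we insert the expansions of $m$ and $\nabla\theta$ to order $n-1$. The product $t^{-a\alpha}m_a\cdot t^{1-(b+1)\alpha}\Gamma_b$ with $a+b=n-1$ integrates against $s^{-2}$ to give exactly $t^{-n\alpha}\frac{1}{n\alpha}m_a\Gamma_b$, which produces $m_n$ by \eqref{eq:recurssion}. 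The cross terms with remainders are smaller by a factor $t^{-\alpha}$.

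For the action, we use the transport equation \eqref{eq:PDE_Gamma} for $\Gamma=\nabla\theta$. We write
\[
\partial_t\theta=t^{-\alpha}V_\alpha*m+\tfrac1{2t^2}|\nabla\theta|^2-t^{-\alpha}(I-J)V_\alpha*m.
\]
The last term is $O_{W^{2,\infty}}(\varepsilon^2t^{-1-\alpha/2})$ or better, by the dyadic kernel argument behind \eqref{eq:phi-high-oo}. For $W^{1,\infty}$ decay we want $t^{-1-\alpha}$, so we use the bound $1-\beta_t\lesssim t^{-1}|\xi|^2$ on the $W^{2,1}$ norm of $m$, which gives $\|(I-J)\nabla V_\alpha*m\|_{W^{1,\infty}}\lesssim t^{-1}\|m\|_{W^{3,1}\cap H^{3}}$. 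Substituting the density expansion in $V_\alpha*m$ and the gradient expansion in $\tfrac1{2t^2}|\nabla\theta|^2$ reproduces $\sum_n t^{-(n+1)\alpha}\theta_n$: the term $V_\alpha*m_n$ comes with $t^{-(n+1)\alpha}$, and $\tfrac12\Gamma_a\cdot\Gamma_b$ with $a+b=n-1$ comes with $t^{-2}t^{2-(n+1)\alpha}$. Integrating in time from $1$ then gives the next gradient expansion with coefficient $\Gamma_n=\nabla\theta_n/(1-(n+1)\alpha)$. This closes the induction, with the $\Lambda$ constants absorbed into the $O(1)$ part of $R_n$.

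At the final order $n=L$, the untreated terms in $\partial_t(\theta-\Theta)$ are of two kinds. The first kind comprises the current, the high-frequency part, and the constant-in-time remainder pieces. These are of size $\varepsilon^2t^{-1-\alpha}$: the $O(1)$ part of $R$ enters through $t^{-2}\Gamma\cdot R\lesssim t^{-1-\alpha}$, and the current enters through $t^{-\alpha}\cdot t^{-1}$. The second kind comprises the products with total weight $t^{-(L+2)\alpha}$, which are quartic in $\varepsilon$; they come from $t^{-\alpha}V_\alpha*\rho_L$ and $t^{-2}\Gamma\cdot R_L$. In the case $(L+1)\alpha=1$, $\Lambda_1=\log t$ makes the last gradient coefficient grow like $\log t$, which yields \eqref{eq:decay_expansion_log}. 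Since $(L+2)\alpha>1$, both bounds are integrable in time, so $\theta-\Theta$ is Cauchy in $W^{1,\infty}$.

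The main obstacle is the bookkeeping in the density remainder $\rho_n$. It must be controlled in $L^1$-based and $L^2$-based norms before convolution, with a derivative lost at each step, while the gradient remainder $R_n$ must include the nondecaying constant produced by the lower integration limit. We must check that this constant only ever multiplies coefficients carrying $t^{-2}$ or feeds into $\rho$ at order $t^{-1}$, and so never contaminates a nonintegrable order.
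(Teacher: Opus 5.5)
Your architecture is the same as the paper's. The density remainder is integrated back from $t=\infty$, the gradient remainder is integrated from $t=1$, the flux $m\nabla\theta$ generates $m_n$, the potential and quadratic terms generate $\theta_n$, and the $O(\varepsilon^2)$ constant from the lower limit enters only through factors $t^{-2}$.

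The gap is in the derivative bookkeeping, and it bites at the last order. Under the stated hypotheses the derivative budget is set by $L$, not by $N$. The current is only assumed in $W^{L,1}\cap W^{L,\infty}$ by \eqref{assumption:current}, and $m_0$ only in $W^{L,1}$ (besides $H^N$) by \eqref{assumption:density}. Consequently:
\begin{itemize}
\item for every $n\ge1$, your $\rho_n$ contains $\int_t^\infty s^{-2}\nabla\cdot\mathcal J\,ds$, which you yourself place only in $W^{L-1,1}\cap W^{L-1,\infty}$;
\item already $m_1=-\alpha^{-1}\nabla\cdot(m_0\Gamma_0)$ lies only in $W^{L-1,1}$;
\item so the claims $\rho_n\in W^{N-1-n,1}\cap H^{N-1-n}$, $m_n\in W^{N-1-n,1}$, and ``after $L$ steps we still retain $W^{2,1}\cap H^2$'' are not available.
\end{itemize}
The hypothesis $N\ge L+3$ is actually used for Sobolev embeddings such as $\|\varphi_{\mathrm{high}}\|_{W^{L+1,\infty}}\lesssim\|\varphi_{\mathrm{high}}\|_{H^N}$. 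Two further bookkeeping points: you need $L^1\cap L^\infty$-based norms rather than $H^s$ to apply \eqref{eq:V_convolution}; and the gradient remainder must be carried in $W^{L+1-n,\infty}$, not merely $W^{1,\infty}$, because it is differentiated again in every later flux.

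With the correct count, suppose $\rho_0$ is obtained from the integrated equation, as you describe. Then $\rho_0\in W^{L-1}$ and each order costs one derivative, so $\rho_{L-1}$ is only in $L^1\cap L^\infty$. Set
\[
\mathcal F_L:=m\nabla\theta-\sum_{a+b\le L-1}t^{1-(a+b+1)\alpha}m_a\Gamma_b .
\]
Then $\rho_L=-\int_t^\infty s^{-2}\nabla\cdot(\mathcal F_L-\mathcal J)\,ds$ cannot be bounded in $L^1\cap L^\infty$, since that would need a derivative of $\rho_{L-1}$. So the final $W^{1,\infty}$ estimate does not follow from your scheme as written.

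The missing idea is the one the paper uses at $n=L$: do not estimate $\rho_L$; estimate $V_\alpha*\rho_L$, moving the divergence onto the kernel,
\[
\partial_t(V_\alpha*\rho_L)=t^{-2}\,\nabla V_\alpha*(\mathcal F_L-\mathcal J),
\]
and apply \eqref{eq:nabla_V_convolution}, which needs $\mathcal F_L$ only in $L^1\cap L^\infty$. This gives $\|V_\alpha*\rho_L\|_{W^{1,\infty}}\lesssim\varepsilon^2t^{-1}+\varepsilon^4t^{-1}(1+\Lambda_{(L+1)\alpha}(t))$, which is exactly what the last step needs.

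Alternatively, you can recover the missing derivative by taking $\rho_0=m-m_0$ directly from \eqref{assumption:convergence}. This gives $W^{N-1,1}\cap H^{N-1}\subset W^{L,1}\cap W^{L,\infty}$, since $N-3\ge L$. Then $\rho_n\in W^{L-n,1}\cap W^{L-n,\infty}$ for $1\le n\le L$, and $\rho_L\in L^1\cap L^\infty$ suffices for \eqref{eq:V_convolution}. The weaker rate $\varepsilon^2t^{-\alpha}$ only degrades $R_1=\nabla\theta-t^{1-\alpha}\Gamma_0$ to $O(\varepsilon^2(1+\Lambda_{2\alpha}))$. At the next order, the factors $m=O(\varepsilon^2)$ and $t^{-2}\nabla\theta=O(\varepsilon^2t^{-1-\alpha})$ that multiply $R_1$ restore quartic bounds.

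Finally, your density target $\varepsilon^2t^{-(n+1)\alpha}$ would give only $\varepsilon^2t^{-(L+2)\alpha}$ in the final estimate. To get the stated $\varepsilon^4$, you must keep the split $\varepsilon^2t^{-1}+\varepsilon^4t^{-(n+1)\alpha}$ between the current and flux contributions.
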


\begin{proof}
    Sobolev embedding, the hypotheses \eqref{eq:assumption}, and the complementary-potential estimates of Lemma \ref{lem:est_varphi} imply
    \begin{align}
        \|m(t)\|_{W^{L,1}\cap W^{L,\infty} } +\|m_{0}\|_{W^{L,1}\cap W^{L,\infty}} &\lesssim\varepsilon^2,\\
        \|\Gamma(t)\|_{W^{L+1,\infty}} &\lesssim\varepsilon^2t^{1-\alpha}\label{eq:induction_hypo_Gamma}\\
        \|\varphi_\hi(t)\|_{W^{L+1,\infty}}\lesssim \|\varphi_\hi(t)\|_{H^N}&\lesssim \varepsilon^2 t^{-1}.\label{eq:induction_high_est}
    \end{align}
    The coefficient recursion consequently gives
    \begin{equation}\label{eq:recursive_bounds}
        \begin{split}
            \|m_n\|_{W^{L-n,1}\cap W^{L-n,\infty}}\,+\,\|\theta_n\|_{W^{L+2-n,\infty}}&\lesssim \varepsilon^{2n+2}, \quad 0\leq n\leq L,\\
        \big\|\Gamma_n\big\|_{W^{L+1-n,\infty}}&\lesssim \varepsilon^{2n+2}, \quad 0\leq n<L.
        \end{split}
    \end{equation}
    We prove these bounds by induction on $n$. The base case $n=0$ follows from the bound on $m_0$ in \eqref{assumption:density}, \eqref{eq:V_convolution}, \eqref{eq:nabla_V_convolution}, and the definition of $\Gamma_0$. 

    Suppose the bounds hold for all indices below $n$. In the formula for $m_n$ in \eqref{eq:recurssion}, the indices satisfy $a+b=n-1$. Hence, $m_a$ has at least $L+1-n$ derivatives in $L^1\cap L^\infty$, while $\Gamma_b$ has at least $L+2-n$ bounded derivatives. The product rule therefore controls $m_a\Gamma_b$ in $W^{L+1-n,1}\cap W^{L+1-n,\infty}$. Taking its divergence gives the required regularity of $m_n$. 

    The convolution estimate \eqref{eq:V_convolution} then places $V_\alpha*m_n$ in $W^{L+2-n,\infty}$. The quadratic terms $\Gamma_a\cdot\Gamma_b$ have the same regularity, since both factors have at least $L+2-n$ bounded derivatives. This proves the bound for $\theta_n$, and its defining relation gives the bound for $\Gamma_n$ when $n<L$. Finally, every product appearing at this step has size
    \[
       \varepsilon^{2a+2}\varepsilon^{2b+2} =\varepsilon^{2n+2},\qquad \text{for }\,\,a+b=n-1,
    \]
    which proves \eqref{eq:recursive_bounds}.

    \medskip

    \noindent \paragraph{\bf Induction setup}
    Define the remainders
    \begin{equation}\label{eq:remainder_density}
        r_{n}(t)=m(t)-\sum_{a=0}^nt^{-a\alpha}m_{a}, \quad 0\le n\le L,
    \end{equation}
    \begin{equation}\label{eq:remainder_action}
        q_{n}(t)=\Gamma(t)-\sum_{b=0}^nt^{1-(b+1)\alpha}\Gamma_{b}, \quad 0\le n<L.
    \end{equation}
    We prove, for $0\leq n<L$, that 
    \begin{subequations} \label{eq:induction}
    \begin{align}
        \|r_{n}(t)\|_{W^{L-1-n,1}\cap W^{L-1-n,\infty}}&\lesssim\varepsilon^2t^{-1}+\varepsilon^4t^{-(n+1)\alpha}, \label{eq:induction_density}\\[.6em]
        \|q_{n}(t)\|_{W^{L-n,\infty}} &\lesssim\varepsilon^2+\varepsilon^4\Lambda_{(n+2)\alpha}(t). \label{eq:induction_gradient}
    \end{align}
    \end{subequations}
    At each order we estimate the density by integrating from its limiting value, and then estimate the action by integrating from $t=1$. The normalization of the time factors gives
    \begin{equation}\label{remainder:constant}
         q_{n} =\nabla\left(\theta-\sum_{a=0}^n \Lambda_{(a+1)\alpha}\theta_{a}\right) -\sum_{a=0}^n \Gamma_{a}.
    \end{equation}
    The last sum is independent of time and is $\mathcal{O}(\varepsilon^2)$ in the required norm. 

    \medskip

    \noindent \paragraph{\bf Base step.} For $n=0$, the density equation in \eqref{eq:action--density} gives
    \begin{equation}\label{eq:density_base_identity}
         r_{0}(t)=\int_t^\infty s^{-2}\,\nabla_y\cdot\mathcal J(s)\,ds -\int_t^\infty s^{-2}\,\nabla_y\cdot (m\Gamma)(s)\,ds.
    \end{equation}
    For $p\in\{1,\infty\}$, the product rule and \eqref{eq:assumption} give
    \begin{align*}
        \|r_0(t)\|_{W^{L-1,p}} &\lesssim \int_t^\infty s^{-2}\left(\|\mathcal J(s)\|_{W^{L,p}}+\|m(s)\|_{W^{L,p}}\|\Gamma(s)\|_{W^{L,\infty}}\right)\,ds\\
        &\lesssim \varepsilon^2\int_t^\infty s^{-2}\,ds +\varepsilon^4\int_t^\infty s^{-1-\alpha}\,ds\,\,\,=\,\,\, \varepsilon^2t^{-1} +\frac{\varepsilon^4}{\alpha}t^{-\alpha}\lesssim \varepsilon^2 t^{-1}+\varepsilon^4t^{-\alpha}.
    \end{align*}
    Substituting this bound into the action equation and using the convolution estimate and the bound on $\varphi_\hi$, we obtain
    \begin{equation} \label{eq:density_base_check}
        \|\partial_t(\theta-\Lambda_\alpha\theta_0)(t) \|_{W^{L+1,\infty}} \lesssim\varepsilon^2t^{-1-\alpha}+\varepsilon^4t^{-2\alpha}.
    \end{equation}
    Integrating from 1 to $t$ proves the asserted bound for $q_0$, since $q_0(1)=-\Gamma_0$. 

    \medskip 

    \noindent \paragraph{\bf Inductive step.} Let $1\leq n\leq L$, and assume the remainder bounds in \eqref{eq:induction} at order $n-1$. The errors in the density-flux and the quadratic action term have the exact decompositions:
    \begin{equation}\label{eq:decomposition_density_errir}
        \begin{split}
            \mathcal{F}_n:&=m\Gamma-\sum_{a+b\leq n-1} t^{1-(a+b+1)\alpha}m_a\Gamma_b\\
            &= r_{n-1} \left(\sgm \right) +mq_{n-1} +\sum_{\substack{0\leq a,b<n\\a+b\geq n} } t^{1-(a+b+1)\alpha}m_a\Gamma_b,
        \end{split}
    \end{equation}
    \begin{equation}\label{eq:decomposition_action_errir}
        \begin{split}
            \mathcal{Q}_n:&= t^{-2}|\Gamma|^2 -\sum_{a+b\leq n-1} t^{-(a+b+2)\alpha}\Gamma_a\cdot \Gamma_b\\
            &=t^{-2}\left(\Gamma+\sgm\right)\cdot q_{n-1}+\sum_{\substack{0\leq a,b<n\\a+b\geq n}} t^{-(a+b+2)\alpha}\,\Gamma_a \cdot \Gamma_b.
        \end{split}
    \end{equation}
    All coefficient indices are nonnegative and empty sums are taken to vanish. The recursion cancels the retained terms, leaving
    \begin{equation}\label{eq:PDE_r}
        \partial_t r_n=t^{-2}\nabla_y\cdot \left(\mathcal{F}_n-\mathcal J\right),
    \end{equation}
    and 
    \begin{equation}\label{eq:PDE_action_difference}
        \partial_t\left(\theta-\sum_{a=0}^n \Lambda_{(a+1)\alpha}(t)\, \theta_a \right)=t^{-\alpha}V_\alpha *r_n-t^{-\alpha} \varphi_\hi +\frac{1}{2}\mathcal{Q}_n.
    \end{equation}
    Set $\beta=(n+1)\alpha\leq 1$. The coefficient bounds \eqref{eq:recursive_bounds} give
    \begin{equation}
        \left\|\,\sgm\,\right\|_{W^{L+1-n,\infty}}\lesssim\, \sum_{b=0}^{n-1}\varepsilon^{2b+2}t^{1-(b+1)\alpha} \lesssim\,\varepsilon^2t^{1-\alpha}.
    \end{equation}
    Together with \eqref{eq:induction_hypo_Gamma}, this implies
    \begin{equation}
        \left\|\,\Gamma(t)+ \sgm(t)\right\|_{W^{L+1-n,\infty}} \lesssim\varepsilon^2t^{1-\alpha}.
    \end{equation}
    We will also use
    \begin{equation}
        t^{1-\beta} =1+(1-\beta)\Lambda_\beta(t) \leq 1+\Lambda_\beta(t), \qquad \text{for }\,0<\beta\leq1.
    \end{equation}
For $p\in\{1,\infty\}$, applying the product rule to each term in \eqref{eq:decomposition_density_errir} gives
\begin{equation}\label{eq:est_F_n}
\begin{split}
    \|\mathcal F_n(t)\|_{W^{L-n,p}}&\lesssim \|r_{n-1}(t)\|_{W^{L-n,p}} \left\|\sgm(t)\right\|_{W^{L-n,\infty}}\\
    &\qquad + \|m(t)\|_{W^{L-n,p}} \|q_{n-1}(t)\|_{W^{L-n,\infty}}\\[.6em]
    &\qquad+ \sum_{\substack{0\le a,b<n\\a+b\ge n}} t^{1-(a+b+1)\alpha}
    \|m_a\|_{W^{L-n,p}} \|\Gamma_b\|_{W^{L-n,\infty}}\\
    &\lesssim \big(\varepsilon^2t^{-1}+\varepsilon^4t^{-n\alpha}\big) \varepsilon^2t^{1-\alpha}+\varepsilon^2\big(\varepsilon^2+\varepsilon^4\Lambda_\beta(t)\big)
    +\varepsilon^{2n+4}t^{1-\beta}\\
    &\lesssim\varepsilon^4 \big(1+\Lambda_\beta(t)\big).
\end{split}
\end{equation}
Here the finite sum is controlled using the coefficient bounds and
\[
\varepsilon^{2a+2}\varepsilon^{2b+2}\leq\varepsilon^{2n+4},  \qquad   t^{1-(a+b+1)\alpha}\leq t^{1-\beta}, \qquad \text{for }\,a+b\geq n.
\]
For the quadratic remainder, its factorization \eqref{eq:decomposition_action_errir} gives 
\begin{equation}\label{eq:est_Q_n}
\begin{split}
    \|\mathcal Q_n(t)\|_{W^{L+1-n,\infty}}
    &\lesssim t^{-2}\left\|\Gamma(t)+\sgm(t)\right\|_{W^{L+1-n,\infty}} \|q_{n-1}(t)\|_{W^{L+1-n,\infty}}\\
    &\qquad + \sum_{\substack{0\le a,b<n\\a+b\ge n}} t^{-(a+b+2)\alpha}\|\Gamma_a\|_{W^{L+1-n,\infty}} \|\Gamma_b\|_{W^{L+1-n,\infty}}\\
    &\lesssim \varepsilon^2t^{-1-\alpha} \big(\varepsilon^2+\varepsilon^4\Lambda_\beta(t)\big) +\varepsilon^{2n+4}t^{-(n+2)\alpha}\\
    &= t^{-1-\alpha} \left[ \varepsilon^4+\varepsilon^6\Lambda_\beta(t) +\varepsilon^{2n+4}t^{1-\beta} \right]\,\lesssim\, \varepsilon^4t^{-1-\alpha}\big(1+\Lambda_\beta(t)\big).
\end{split}
\end{equation}
Thus the flux estimate requires $L-n$ derivatives of $r_{n-1}$, while the quadratic estimate requires $L+1-n$ derivatives of $q_{n-1}$, exactly as supplied by the induction hypothesis \eqref{eq:induction}.

\medskip

\noindent \textit{\underline {If $n<L$,}} then $\beta<1$ and $1+\Lambda_\beta(t)\lesssim t^{1-\beta}$. The density remainder equation \eqref{eq:PDE_r} therefore satisfies
\begin{equation}
    \big\|\partial_t r_n(t)\big\|_{W^{L-1-n,1}\cap W^{L-1-n,\infty}}\lesssim\varepsilon^2 t^{-2}+\varepsilon^4 t^{-1-(n+1)\alpha},
\end{equation}
using \eqref{eq:est_F_n} and \eqref{assumption:current}. Integration from $t$ to infinity proves the
density estimate \eqref{eq:induction_density} at order $n$. 

Applying the convolution estimate \eqref{eq:V_convolution} with $s=L-1-n\ge0$, we bound
\begin{align*}
    \|V_\alpha*r_n(t)\|_{W^{L+1-n,\infty}} &\lesssim \|r_n(t)\|_{W^{L-1-n,1}} +\|r_n(t)\|_{W^{L-1-n,\infty}}\,\,\lesssim \,\,\varepsilon^2t^{-1} +\varepsilon^4t^{-(n+1)\alpha}.
\end{align*}
The gain of two derivatives provides exactly the spatial regularity needed in the action equation. Using \eqref{eq:PDE_action_difference}, \eqref{eq:induction_high_est} and \eqref{eq:est_Q_n}, we bound
\begin{align*}
    &\left\| \partial_t\left( \theta-\sum_{a=0}^{n}\Lambda_{(a+1)\alpha}\theta_a \right)(t) \right\|_{W^{L+1-n,\infty}}\\
    &\leq t^{-\alpha}\|V_\alpha*r_n(t)\|_{W^{L+1-n,\infty}} +t^{-\alpha}\|\varphi_{\mathrm{high}}(t)\|_{W^{L+1-n,\infty}} +\frac12\|\mathcal Q_n(t)\|_{W^{L+1-n,\infty}}\\
    &\lesssim \varepsilon^2t^{-1-\alpha} +\varepsilon^4t^{-(n+2)\alpha} +\varepsilon^4t^{-1-\alpha}\big(1+\Lambda_\beta(t)\big)\\
    &\lesssim \varepsilon^2t^{-1-\alpha}+\varepsilon^4t^{-(n+2)\alpha}.
\end{align*}
In the last step, we used $\beta=(n+1)\alpha<1$, since $n<L$, and hence
\[
t^{-1-\alpha}\big(1+\Lambda_\beta(t)\big)\lesssim t^{-1-\alpha}t^{1-\beta} =t^{-(n+2)\alpha}.
\]
Integrating from 1 to $t$ and using identity \eqref{remainder:constant} proves the action estimate. This closes the
induction for every $0\leq n<L$, including the case $(n+2)\alpha=1$. 

\medskip

\noindent \textit{\underline {If $n=L$,}} the induction controls $r_{L-1}$ only in $L^1\cap L^\infty$. We therefore \emph{convolve} the density equation \emph{before} estimating it to gain derivatives:
\begin{equation}
    \partial_t \left(V_\alpha*r_L \right)=t^{-2} \nabla_yV_\alpha * \big(\mathcal F_L-\mathcal J\big).
\end{equation}
The divergence now falls on the kernel, so no additional derivative of $r_{L-1}$ is required. With $\beta=(L+1)\alpha$, the convolution estimate \eqref{eq:nabla_V_convolution} and \eqref{eq:est_F_n} give\footnote{The boundary value $V_\alpha*r_L(t)\to 0$ as $t\rightarrow\infty$ follows from \eqref{assumption:convergence}, \eqref{eq:V_convolution} and \eqref{eq:recurssion}.}
\begin{equation}
    \big\|\partial_t \left(V_\alpha*r_L \right)(t)\big\|_{W^{1,\infty}}\lesssim \varepsilon^2 t^{-2}+\varepsilon^4 t^{-2}\big(1+\Lambda_\beta(t)\big).
\end{equation}
Since 
\begin{align*}
    \int_t^\infty s^{-2} \big(1+\Lambda_\beta(s)\big)\, ds=\frac{1+\Lambda_\beta(t)}{t}+\frac{t^{-\beta}}{\beta} \,\lesssim\, \frac{1+\Lambda_\beta(t)}{t},
\end{align*}
we obtain
\begin{equation}
    \big\|V_\alpha*r_L(t)\big\|_{W^{1,\infty}}\lesssim \varepsilon^2 t^{-1}+\varepsilon^4 t^{-1}\big(1+\Lambda_\beta(t)\big).
\end{equation}
Using \eqref{eq:PDE_action_difference}, \eqref{eq:induction_high_est} and \eqref{eq:est_Q_n}, we conclude that
\begin{align*}
    \|\partial_t(\theta-\Theta )(t)\|_{W^{1,\infty}} &\leq t^{-\alpha}\|V_\alpha*r_L(t)\|_{W^{1,\infty}}+t^{-\alpha}\|\varphi_{\mathrm{high}}(t)\|_{W^{1,\infty}} +\frac12\|\mathcal Q_L(t)\|_{W^{1,\infty}}\\
    &\lesssim \varepsilon^2t^{-1-\alpha} +\varepsilon^4t^{-1-\alpha}\bigl(1+\Lambda_\beta(t)\bigr).
\end{align*}
If $\beta<1$, this proves \eqref{eq:decay_expansion}; if $\beta=1$, then $\Lambda_\beta(t)=\log t$ and this proves \eqref{eq:decay_expansion_log}. Both bounds are integrable since $(L+2)\alpha>1$. As $\Theta(1)=\theta(1)=0$, there exists a real-valued function $\theta_{\mathrm{final}}\in W^{1,\infty}$ with 
\begin{equation}\label{eq:size_tail_action}
    \big\|\theta_{\mathrm{final}}\big\|_{W^{1,\infty}}\lesssim \varepsilon^2,
\end{equation}
such that
\begin{equation}\label{eq:convergence_action_final_state}
    \big\|\theta(t)-\Theta(t)-\theta_{\mathrm{final}}\big\|_{W^{1,\infty}}\lesssim
    \begin{cases}
        \varepsilon^2 t^{-\alpha} +\,\varepsilon^4 t^{1-(L+2)\alpha}, \qquad &(L+1)\alpha<1,\\[.4em]
         \varepsilon^2 t^{-\alpha} +\,\varepsilon^4 t^{-\alpha} \big(1+\log t\big), \qquad &(L+1)\alpha=1.
    \end{cases}
\end{equation}
This proves the proposition.
\end{proof}

\begin{remark}
   From the proof, we also have the regularity of the functions $\theta_\ell$: 
   \begin{equation*}
       \begin{split}
          \theta_0\in W^{L+2,\infty},\quad  \theta_1\in W^{L+1,\infty},\dots \theta_{L}\in W^{2,\infty}.
       \end{split}
   \end{equation*}
   As a consequence,  $\Theta(t,\cdot)\in W^{2,\infty}$ for fixed $t\geq 1$.
\end{remark}


\subsection{Modified scattering for Hartree}\label{subsec:hartree-scatter}

The corrected profile has already been shown to converge in Subsection \ref{subsec:boot_hartree}. We now replace the full action by its expansion and absorb the finite action remainder into the limiting profile.

\begin{proof}[Proof of Theorem \ref{thm:Hartree}]
Under the initial-data assumption \eqref{eq:initial-Hartree}, Lemma \ref{lem:close_hartree} and Proposition \ref{prop:hartree_h_convergence} give \eqref{assumption:convergence} and \eqref{assumption:action}, as well as
the $H^N$ bounds in \eqref{assumption:density}, with 
\[
m_0=m_\infty=|h_\infty|^2.
\]
The remaining density bounds follow from the product rule and Cauchy–Schwarz:
\begin{align*}
    \|m(t)\|_{W^{L,1}}+\|m_0\|_{W^{L,1}}\lesssim \|h(t)\|_{H^L}^2+\|h_\infty\|_{H^L}^2\lesssim\varepsilon_0^2. 
\end{align*}
For the current $\mathcal J=\Im (\bar h\nabla_y h)$, \eqref{eq:current_est} and $N\geq L+3$ give
\[
\big\|\mathcal J(t)\big\|_{W^{L,1}\cap W^{L,\infty}}\lesssim\big\|\mathcal{J}(t)\big\|_{W^{N-1,1}\cap H^{N-1}}\lesssim\varepsilon_0^2.
\]
Thus all four assumptions in \eqref{eq:assumption} hold, and Proposition \ref{prop:expansion} applies with its smallness parameter a fixed multiple of $\varepsilon_0$. Let $\Theta$ be the action defined by \eqref{eq:Theta}, with coefficients generated from $m_0=m_\infty$ by \eqref{eq:recurssion}. By \eqref{eq:convergence_action_final_state}, there is a real-valued $\theta_{\mathrm{final}}\in W^{1,\infty}$ such that 
\begin{equation}
    \big\|\theta(t)-\Theta(t)-\theta_{\mathrm{final}}\big\|_{W^{1,\infty}}\lesssim\varepsilon_0^2 \mathfrak r_\alpha(t),
\end{equation}
where $\mathfrak{r}_{\alpha}$ is defined in \eqref{eq:errorfunction}. 
Define the scattering profile by $\widetilde{h}_\infty:=e^{-i\theta_{\mathrm{final}}}h_\infty$. Since multiplication by this phase preserves the modulus, the limiting density is unchanged:
\[
\big|\widetilde{h}_\infty\big|^2=|h_\infty|^2=m_\infty.
\]
Consequently, the coefficients of $\Theta$ are also the coefficients generated by the density of $\widetilde h_\infty.$ The available phase regularity also gives $\widetilde h_\infty\in H^1\cap L^\infty$, with
\[
\big\|\widetilde h_\infty\big\|_{H^1\cap L^\infty}\lesssim \big(1+\|\nabla_y \theta_\mathrm{final}\|_{L^\infty}\big) \|h_\infty\|_{H^N}\lesssim \varepsilon_0.
\]
The reconstruction formula reads 
\[
u(t,x)=t^{-3/2}e^{i\frac{|x|^2}{2t}}e^{-i\theta\left(t,\frac{x}{t}\right)}h\left(t,\frac{x}{t}\right).
\]
For $2\leq p\leq \infty$, Proposition \ref{prop:hartree_h_convergence} and Sobolev embedding using $H^{N-1}$ give $\|h(t)-h_\infty\|_{L^p}\lesssim\varepsilon_0 t^{-\alpha}$. Using $|e^{ia}-e^{ib}|\leq |a-b|$ for real numbers $a,b$ and $t^{-\alpha}\leq \mathfrak r_\alpha(t)$, we obtain
\begin{align*}
    \big\|e^{-i(\theta(t)-\Theta(t))}h(t)-\widetilde h_\infty\big\|_{L^p}&\leq \|h(t)-h_\infty\|_{L^p}+\|\theta(t)-\Theta(t)-\theta_{\mathrm{final}}\|_{L^\infty}\|h_\infty\|_{L^p}\\
    &\lesssim\varepsilon_0 t^{-\alpha}+\varepsilon_0^3 \mathfrak r_\alpha(t)\,\lesssim\, \varepsilon_0 \mathfrak r_\alpha(t).
\end{align*}
The change of variables $x=ty$ therefore yields
\begin{equation}
    \left\|u(t,x)-t^{-3/2}e^{i\frac{|x|^2}{2t}}e^{-i\Theta\left(t,\frac{x}{t}\right)}\widetilde h_\infty\left(\frac{x}{t}\right) \right\|_{L^p_x} \lesssim\varepsilon_0 t^{-\frac{3}{2}+\frac{3}{p}} \mathfrak r_\alpha(t).
\end{equation}
In particular taking $p=\infty$ and renaming $\widetilde{h}_{\infty}$ as $h_{\infty}$, this gives the stated uniform asymptotics in Theorem \ref{thm:Hartree}.
\end{proof}


\subsection{Modified scattering for Vlasov--Riesz}\label{subsec:VR-scatter} As in the Hartree argument in Subsection \ref{subsec:hartree-scatter}, we combine the convergence of the corrected profile $h$, established in Subsection \ref{subsec:boot_VR}, with the action expansion in Proposition \ref{prop:expansion} to obtain the final asymptotics.

\begin{proof}[Proof of Theorem \ref{thm:VR}]
Under the initial-data assumption \eqref{eq:initial-VR}, Lemma \ref{lem:propagation} and Proposition \ref{prop:convergence_h2}, together with Lemma \ref{lem:action_est}, give \eqref{assumption:convergence}-\eqref{assumption:current}, with
\[
m_0=m_\infty=\int h_\infty^2\, d\eta.
\]
Proposition \ref{prop:expansion} and \eqref{eq:convergence_action_final_state} provide the action $\Theta$ generated by $m_0$ and a real-valued $\theta_{\mathrm{final}}\in W^{1,\infty}$ such that   
\begin{equation}
    \big\|\theta(t)-\Theta(t)-\theta_{\mathrm{final}}\big\|_{W^{1,\infty}}\lesssim\varepsilon_0^2 \mathfrak r_\alpha(t).
\end{equation}

Define the scattering profile by $\widetilde h_\infty(y,\eta):=h_\infty(y,\eta+\nabla_y\theta_{\mathrm{final}}(y))$. Since translation in $\eta$ preserves the integral, the limiting density is unchanged:
\begin{equation*}
\begin{split}
\int_{\mathbb R^3}\widetilde h_\infty(y,\eta)^2\,d\eta =\int_{\R^3}h_\infty(y,\eta)^2\,d\eta =m_\infty(y).
\end{split}
\end{equation*}
This, as in the Hartree case, again leaves every coefficient of $\Theta$ unchanged. Fubini's theorem also shows that a $y$-dependent translation in $\eta$ preserves each $L^p_{y,\eta}$ norm. In particular, $\widetilde h_\infty \in L^2\cap L^\infty$ and 
\[
\big\|\widetilde h_\infty\big\|_{L^p}=\big\| h_\infty\big\|_{L^p}\lesssim\varepsilon_0, \quad \text{for }\, 2\leq p\leq \infty.
\]
The definitions of $g$ and $h$ give the reconstruction formula
\begin{equation*}
\begin{split}
     f(t,x,v) = h\left(t,\frac{x}{t},tv-x+\Gamma \left(t,\frac{x}{t}\right) \right).
\end{split}
\end{equation*}
By Proposition \ref{prop:convergence_h2}, \eqref{eq:kinetic_propagation}, and interpolation between $L^2$ and $L^\infty$, 
\begin{align*}
    \|h(t)-h_\infty\|_{L^p}\lesssim\varepsilon_0 t^{-\alpha}, \quad \|\nabla_\eta h(t)\|_{L^p}\lesssim\varepsilon_0,\quad \text{for }\,2\leq p\leq\infty. 
\end{align*}
Using the fundamental theorem of calculus in $\eta$ on $h(t)$ and the translation invariance just noted, we obtain
\begin{align*}
    &\left\|h\left(t,y,\eta+\Gamma(t,y)-\nabla_y\Theta(t,y)\right)-\widetilde h_\infty\left(y,\eta\right)\right\|_{L^p_{y,\eta}}\\
    &\quad \leq\|h(t)-h_\infty\|_{L^p} +\|\Gamma(t)-\nabla_y\Theta(t)-\nabla_y \theta_{\mathrm{final}}\|_{L^\infty}\|\nabla_\eta h(t)\|_{L^p}\\
    &\quad\lesssim\varepsilon_0t^{-\alpha}+\varepsilon_0^3\mathfrak r_\alpha(t)\lesssim\varepsilon_0\mathfrak r_\alpha(t).
\end{align*}
Finally, the change of variables $(x,v)=(ty,y+t^{-1}(\eta-\nabla_y\Theta(t,y)))$ has Jacobian determinant one. Hence, 
\begin{equation*}
\begin{split}
\left\|f(t,x,v)-\widetilde h_\infty\left(\frac{x}{t},tv-x+\nabla_y\Theta\left(t,\frac{x}{t}\right)\right)\right\|_{L^p_{x,v}}\lesssim\varepsilon_0\mathfrak r_\alpha(t),\qquad 2\le p\le\infty,
\end{split}
\end{equation*}
which proves the stated\footnote{To match the residual-momentum convention in Theorem \ref{thm:VR}, one sets $f_{\infty}(x,v)=\widetilde{h}_{\infty}(x,-v)$} asymptotics in Theorem \ref{thm:VR}.

\end{proof}

\section{The Vlasov--Hartree system}\label{sec:VH}
In this section, we apply the arguments of Sections \ref{sec:boot} and \ref{sec:asymptotics} to the coupled system \eqref{VH}. We retain the component labels introduced in Section \ref{sec:idea}. Thus the interaction potentials are $\varphi_\hs=V_\alpha*m_\ks$ and $\varphi_\ks=V_\alpha*m_\hs$, while each density is transported by its own action. The profile equations remain \eqref{eq:PDE_h_H} and \eqref{eq:PDE_h_K}, with these exchanged sources. We first establish the global profile bounds and convergence, and then obtain the asymptotic actions by the coupled version of the recursion \eqref{eq:recurssion} in Proposition \ref{prop:expansion}. 

Following the discussion of Section \ref{sec:idea}, using the profiles \eqref{eq:g}, the corrected profiles \eqref{eq:h}, and the identities \eqref{eq:theta} and \eqref{eq:continuity-m}, we have the \emph{coupled action--density system}
\begin{subequations}\label{eq:VH_action_density}
\begin{equation}\label{eq:VH_action_density_H}
\left\{\begin{aligned}
\partial_t\theta_\hs&=\frac{|\nabla_y\theta_\hs|^2}{2t^2}+t^{-\alpha}J(t)(V_\alpha*m_\ks),\\
\partial_t m_\hs&=\frac1{t^2}\nabla_y\cdot(m_\hs\nabla_y\theta_\hs)-\frac1{t^2}\nabla_y\cdot\mathcal J_\hs.
\end{aligned}\right.
\end{equation}
\begin{equation}\label{eq:VH_action_density_K}
\left\{\begin{aligned}
\partial_t\theta_\ks&=\frac{|\nabla_y\theta_\ks|^2}{2t^2}+t^{-\alpha}J(t)(V_\alpha*m_\hs),\\
\partial_t m_\ks&=\frac1{t^2}\nabla_y\cdot(m_\ks\nabla_y\theta_\ks)-\frac1{t^2}\nabla_y\cdot\mathcal J_\ks.
\end{aligned}\right.
\end{equation}
\end{subequations}
Here $\hs$ and $\ks$ label the wave and kinetic components, respectively. For each $k\in\{\hs,\ks\}$, $\theta_k$ is the action correcting the profile $h_k$, while $m_k$ and $\mathcal J_k$ are its density and residual current. We normalize $\theta_k(1)=0$ and use the regularizer \eqref{eq:Jt} and the potential decomposition \eqref{eq:decomposition}. The coupling enters through the exchanged source densities in the action equations: $m_\ks$ drives $\theta_\hs$, and $m_\hs$ drives $\theta_\ks$. Each density equation, however, involves the action and current of its own component. We will also use the evolution equations for the corrected profiles $h_\hs$ and $h_\ks$. The wave profile satisfies
\begin{equation}\label{eq:VH_profile_H}
i\partial_t h_\hs+\frac{1}{2t^2}\Delta_y h_\hs-\frac{i}{t^2}\mathcal T_{\theta_\hs}h_\hs=t^{-\alpha}\varphi_{\hs,\hi}h_\hs,
\end{equation}
where $\mathcal T_{\theta_\hs}:=\Gamma_\hs\cdot\nabla_y+\frac12\nabla_y\cdot\Gamma_\hs$ and $\Gamma_k=\nabla_y\theta_k$. The kinetic profile satisfies the Hamiltonian transport equation
\begin{equation}\label{eq:VH_profile_K}
\partial_t h_\ks+\{h_\ks,\mathcal K_\ks\}=0,
\end{equation}
with
\begin{equation}\label{eq:VH_Hamiltonian}
\mathcal K_\ks(t,y,\eta):=t^{-2}\left(\frac{|\eta|^2}{2}-\Gamma_\ks(t,y)\cdot\eta\right)+t^{-\alpha}\varphi_{\ks,\hi}(t,y).
\end{equation}
Thus the corrected profile equations retain the same form as in the scalar problems, with the interaction potentials generated by the opposite component.

\subsection{Global bounds and limiting profiles} Let $N\geq L+3$, so that $N\geq 4$. Local well-posedness follows by a standard iteration of the Schr\"odinger and characteristic equations. We work on the maximal interval of existence; the coupled a priori estimates below yield global continuation.

For sufficiently small $\varepsilon_0$, the initial-data assumptions \eqref{eq:initial-Hartree_VH} and \eqref{eq:initial-VR} of Theorem \ref{thm:VH} give a solution to the coupled system \eqref{VH} on $[0,1]$. Using the Galilean fields as in \eqref{eq:Galilean_g} and the free-transport weight $\langle x-tv\rangle^{2N+10}$ as in \eqref{eq:weighted_small_time}, the standard weighted local estimates provide the bounds needed to initialize the profiles at $t=1$. Since $\theta_{\mathrm H}(1)=\theta_{\mathrm K}(1)=0$, we have $h_{\mathrm H}(1)=g_{\mathrm H}(1)$ and $h_{\mathrm K}(1)=g_{\mathrm K}(1)$. 
The identities
\begin{equation}
    \|\partial_y^\mu g_\hs (1)\|_{L^2_y}= \|G(1)^\mu u(1)\|_{L^2_x},\qquad g_\ks(1,y,\eta)=f(1,y,y+\eta),
\end{equation}
together with $\la x-v\ra=\la\eta\ra$ under the latter change of variables, therefore give
\begin{equation}\label{eq:small_time_VH_data}
    \begin{split}
        \|h_\hs(1)\|_{H^N_y}&+ \sum_{|\mu|+|\nu|\leq N-2}\big\| \langle\eta\rangle^{2N+10} \partial_y^\mu\partial_\eta^\nu h_\ks(1) \big\|_{L^\infty_{y,\eta}}\\
        &\qquad\qquad+ \sum_{|\mu|+|\nu|\leq N} \big\| \langle\eta\rangle^{2N+10} \partial_y^\mu\partial_\eta^\nu h_\ks(1) \big\|_{L^2_{y,\eta}} \lesssim \varepsilon_0.
    \end{split}
\end{equation}
These estimates supply the initial bounds for the coupled bootstrap below. 

As in Section \ref{sec:boot}, we impose the bootstrap \eqref{boot} for both components: for $k\in\{\hs,\ks\}$ and $1\leq t\leq T$,
\begin{equation}\label{eq:boot-VH}
    \begin{split}
        \|m_k(t)\|_{H^N}\leq \varepsilon^2, \qquad \|\nabla_y\Gamma_k(t)\|_{H^{N+1-\alpha}}\leq \varepsilon^2 t^{1-\alpha}.
    \end{split}
\end{equation}
Here $0<\varepsilon_0\leq c\varepsilon$ and $\varepsilon$ is sufficiently small. We close the bootstrap in the following lemma.
\begin{lemma}\label{lem:VH_bounds}
Under the assumptions on initial data in \eqref{eq:initial-Hartree_VH} and \eqref{eq:initial-VR}, the corrected profiles and actions exist for all $t\geq 1$ and the profiles satisfy
\begin{equation}\label{eq:VH_profile_bounds}
\|h_\hs(t)\|_{H^N}+\sum_{|\mu|+|\nu|\le N-2}\|\langle\eta\rangle^4\partial_y^\mu\partial_\eta^\nu h_\ks(t)\|_{L^\infty}+\sum_{|\mu|+|\nu|\le N}\|\langle\eta\rangle\partial_y^\mu\partial_\eta^\nu h_\ks(t)\|_{L^2}\lesssim\varepsilon_0.
\end{equation}
\vspace{.4em}
Moreover, for $k\in\{\hs,\ks\}$,
\begin{equation}\label{eq:VH_density_action_bounds}
\begin{split}
\|m_k(t)\|_{L^1}+\|m_k(t)\|_{H^N}&\lesssim\varepsilon_0^2,\\
\|\Gamma_k(t)\|_{L^\infty}+\|\nabla_y\Gamma_k(t)\|_{H^{N+1-\alpha}} &\lesssim\varepsilon_0^2t^{1-\alpha}.
\end{split}
\end{equation}
The currents satisfy
\begin{equation}\label{eq:VH_current_est}
    \begin{split}
        \|\mathcal{J}_k\|_{W^{N-1,1}\cap H^{N-1}}\les \varepsilon_0^2.
    \end{split}
\end{equation}
\end{lemma}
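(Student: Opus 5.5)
The plan is to run the two scalar bootstraps of Section \ref{sec:boot} simultaneously under the coupled assumptions \eqref{eq:boot-VH}. The common field and action estimates carry over with the sources exchanged. Lemma \ref{lem:est_varphi} applied to $\varphi_\hs=V_\alpha*m_\ks$ and $\varphi_\ks=V_\alpha*m_\hs$ bounds each potential by $\|m_j\|_{L^1}+\|m_j\|_{H^N}$ with $j\neq k$. Lemma \ref{lem:action_est} holds verbatim for each $\Gamma_k$: its proof uses only the transport structure of \eqref{eq:PDE_Gamma}--\eqref{eq:PDE_nabla_Gamma} and the bound on $\nabla_y^2\varphi_{k,\lo}$, and its Gronwall factor uses only the bootstrap bound on $\Gamma_k$ itself. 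Mass conservation gives $\|m_\hs\|_{L^1}=\|u_0\|_{L^2}^2$ and $\|m_\ks\|_{L^1}=\|f_0\|_{L^2}^2$, both $\lesssim\varepsilon_0^2$. Hence, on $[1,T]$,
\[
\|\Gamma_k(t)\|_{L^\infty}+\|\nabla_y\Gamma_k(t)\|_{H^{N+1-\alpha}}\lesssim\varepsilon^2t^{1-\alpha},\qquad \|\nabla_y\Gamma_k(t)\|_{H^{N+1}}\lesssim\varepsilon^2t^{1-\frac\alpha2},
\]
and $\|\varphi_{k,\hi}\|_{H^N}\lesssim\varepsilon^2t^{-1}$.

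Next come the profile estimates. For $h_\hs$, the energy argument of Lemma \ref{lem:close_hartree} uses only the skew-adjointness of $\mathcal T_{\theta_\hs}$, the reality of $\varphi_{\hs,\hi}$, and the bounds just listed. It gives $\|h_\hs(t)\|_{H^N}\lesssim\|h_\hs(1)\|_{H^N}\lesssim\varepsilon_0$ by \eqref{eq:small_time_VH_data}, hence $\|m_\hs\|_{H^N}\lesssim\varepsilon_0^2$ and the bound on $\mathcal J_\hs$.

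For $h_\ks$, I would repeat the dyadic momentum localization and the two-case argument of Lemma \ref{lem:propagation}. The one place where the source density entered beyond $H^N$ is the $L^\infty$ bound \eqref{eq:phi-high-oo} on $\nabla_y\varphi_{\ks,\hi}$, which used $\|m\|_{W^{N-1,\infty}}$. Here the source is $m_\hs=|h_\hs|^2$. Sobolev embedding only gives $\|m_\hs\|_{W^{N-2,\infty}}\lesssim\|h_\hs\|_{H^N}^2$, which loses one derivative. This is why the statement carries $L^\infty$ bounds only for $|\mu|+|\nu|\le N-2$, matching \eqref{eq:small_time_VH_data}.

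I would therefore rerun the kinetic argument with the $L^\infty$ hierarchy truncated at order $N-2$ and the $L^2$ hierarchy at order $N$. The lower-order field coefficients are placed in $L^\infty$ via $\|\nabla_y\varphi_{\ks,\hi}\|_{W^{N-2,\infty}}\lesssim\varepsilon^2t^{-1+\alpha/2}$, obtained from the kernel bound on $(I-J(t))\nabla_yV_\alpha$. The top-order coefficients are placed in $L^2_y$ via \eqref{eq:varphi_hi_derivative} together with the momentum-volume factor $2^{3j/2}$, as in \eqref{eq:early_top_source} and \eqref{eq:late_top_potential_source}. This allocation needs care, and I expect it to be the main obstacle. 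When $|\mu|+|\nu|=N-1$ or $N$ in $L^2$, a field coefficient with up to $N-1$ spatial derivatives may occur. Those coefficients should be put in $L^2_y$, using $\|\nabla_y\varphi_{\ks,\hi}\|_{H^N}$ and $\|\nabla_y\Gamma_\ks\|_{H^{N+1}}$, while the profile factor, which then has few derivatives, goes in $L^\infty$ with the $2^{3j/2}$ weight. This works because $N-2\ge\lfloor N/2\rfloor+1$ for $N\ge4$.

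With this allocation, the integral inequalities \eqref{eq:early_integral_inequality} and \eqref{eq:late_integral_inequality} keep the same Gronwall factors, and the summation over $j$ closes as before. Leibniz and Cauchy--Schwarz then give $\|m_\ks\|_{H^N}\lesssim\varepsilon_0^2$, since the factor with fewer derivatives has at most $\lfloor N/2\rfloor\le N-2$ derivatives. The current bound \eqref{eq:VH_current_est} follows in the same way.

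Finally, I would insert the improved density bounds for both components into Lemma \ref{lem:action_est}. This improves \eqref{eq:boot-VH} for $\varepsilon_0\le c\varepsilon$, and the continuity argument, together with local continuation of the coupled system, gives global existence and \eqref{eq:VH_profile_bounds}--\eqref{eq:VH_current_est}.
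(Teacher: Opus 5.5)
Your proposal follows essentially the paper's proof: exchange sources in Lemmas \ref{lem:est_varphi} and \ref{lem:action_est}, rerun the energy estimate for $h_\hs$, and rerun the localized kinetic argument of Lemma \ref{lem:propagation} with the weighted $L^\infty$ hierarchy truncated at order $N-2$. As in the paper, you replace \eqref{eq:phi-high-oo} by a $W^{N-2,\infty}$ bound and put $\nabla_y\partial_y^{\mu'}\varphi_{\ks,\hi}$ in $L^2_y$ for $|\mu'|\in\{N-1,N\}$, against the momentum-volume factor $2^{3j/2}$.

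One correction concerns the justification you give for that step. The inequality $N-2\ge\lfloor N/2\rfloor+1$ is false for $N=4$, which is admissible when $1/3<\alpha\le 1/2$, and it is not the relevant count. What is actually needed is that the profile factor then carries at most $N-|\mu'|+1\le 2\le N-2$ derivatives, which holds for every $N\ge4$.
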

\begin{proof}
First, conservation of the two masses gives $\|m_k(t)\|_{L^1}\les \varepsilon_0^2$ for $k\in \{\hs,\ks\}$.    Since Lemmas \ref{lem:est_varphi} and \ref{lem:action_est} depend only on the norms of the source density, their proofs apply with the sources exchanged. In particular,
\begin{equation*}
\begin{split}
\|\varphi_{k,\hi}(t)\|_{H^N}&\lesssim\varepsilon^2t^{-1},\qquad \|\nabla_y\varphi_{k,\hi}(t)\|_{H^N}\lesssim\varepsilon^2t^{-1+\frac{\alpha}{2}},\\ \|\Gamma_k(t)\|_{L^\infty}+\|\nabla_y\Gamma_k(t)\|_{H^{N+1-\alpha}}&\lesssim\varepsilon^2t^{1-\alpha},\qquad \|\nabla_y\Gamma_k(t)\|_{H^{N+1}}\lesssim\varepsilon^2t^{1-\frac{\alpha}{2}}.
\end{split}
\end{equation*}
For the wave profile, the energy argument in Lemma \ref{lem:close_hartree} gives
\begin{equation*}
\begin{split}\frac{d}{dt}\|h_\hs(t)\|_{H^N}&\lesssim\left(t^{-2}\|\nabla_y\Gamma_\hs(t)\|_{H^N}+t^{-\alpha}\|\varphi_{\hs,\hi}(t)\|_{H^N}\right)\|h_\hs(t)\|_{H^N}\\&\lesssim\varepsilon^2t^{-1-\alpha}\|h_\hs(t)\|_{H^N}.
\end{split}\end{equation*}
Gr\"onwall's inequality therefore yields $\|h_\hs(t)\|_{H^N}\lesssim\varepsilon_0$. Hence $\|m_\hs(t)\|_{H^N}\les \varepsilon_0^2$.

For the kinetic profile $h_{\ks}$, we repeat the proof of Lemma \ref{lem:propagation}, retaining $N$ derivatives in $L^2$ and $N-2$ derivatives in the weighted $L^{\infty}$ estimates. This adjustment is needed because the source is now $m_\hs$: its $H^N$ bound does not directly give a $W^{N-1,\infty}$ bound, so \eqref{eq:phi-high-oo} is not available at the same derivative order. We therefore replace the scalar equation \eqref{eq:phi-high-oo} by the following consequence of Sobolev embedding and \eqref{eq:varphi_hi_derivative}:
\begin{equation*}
    \begin{split}
        \|\nabla_y\varphi_{\ks,\hi}(t)\|_{W^{N-2,\infty}}\les \|\nabla_y \varphi_{\ks,\hi}(t)\|_{H^N}\les \varepsilon^2 t^{-1+\frac{\alpha}{2}}.
    \end{split}
\end{equation*}
Write $h_\ks=\sum_{\ell\ge0}h_{\ks,\ell}$ using the transported localization from Lemma \ref{lem:propagation}. For the complementary potential, the additional terms in the differentiated $L^2$ estimates have $|\mu'|=N-1$. We estimate them together with those having $|\mu'|=N$. On the late interval $t\ge2^\ell$, the support property and Cauchy--Schwarz give
\begin{equation*}
\begin{split}
&t^{-\alpha}2^{\ell|\nu|}\left\|\left\{\partial_y^{\mu-\mu'}\partial_\eta^\nu h_{\ks,\ell},\partial_y^{\mu'}\varphi_{\ks,\hi}\right\}\right\|_{L^2}\\&\quad\lesssim t^{-\alpha}2^{3\ell/2-\ell}\|\nabla_y\partial_y^{\mu'}\varphi_{\ks,\hi}\|_{L^2_y}\sum_{i=1}^3 2^{\ell(|\nu|+1)}\|\partial_y^{\mu-\mu'}\partial_\eta^{\nu+e_i}h_{\ks,\ell}\|_{L^\infty}\\&\quad\lesssim\varepsilon^2t^{-1-\frac{\alpha}{2}}2^{3\ell/2-\ell}\sum_{i=1}^3 2^{\ell(|\nu|+1)}\|\partial_y^{\mu-\mu'}\partial_\eta^{\nu+e_i}h_{\ks,\ell}\|_{L^\infty}.
\end{split}
\end{equation*}
The derivative order on the last factor is at most $N-|\mu'|+1\le2\le N-2$. The top action terms are controlled as in \eqref{eq:late_top_action_source}, using the retained first derivatives in $L^\infty$; lower action derivatives remain bounded coefficients. On $1\leq t\leq 2^\ell$, the unregularized field estimates \eqref{eq:nabla_varphi} and \eqref{eq:nabla2_varphi} give the same early-interval argument, with $N-2$ in place of $N-1$ in the $L^\infty$ sum. The change between $g_{\ks,\ell}$ and $h_{\ks,\ell}$ is unchanged. Applying Lemma \ref{lem:hamiltonian_transport} and summing the differentiated estimates as in Lemma \ref{lem:propagation}, we obtain
\begin{equation*}
2^{3\ell/2}\sum_{|\mu|+|\nu|\le N-2}\|\partial_y^\mu\partial_\eta^\nu h_{\ks,\ell}(t)\|_{L^\infty} +\sum_{|\mu|+|\nu|\le N}\|\partial_y^\mu\partial_\eta^\nu h_{\ks,\ell}(t)\|_{L^2}\lesssim\varepsilon_0\,2^{-11\ell/2}.
\end{equation*}
Since $\langle\eta\rangle\lesssim2^\ell$ on the support of each differentiated piece, summing in $\ell$ proves \eqref{eq:VH_profile_bounds}, using $\sum_{\ell\ge0}(2^{-3\ell}+2^{-9\ell/2})<\infty$.

The product estimates used in Section \ref{sec:boot} now give $\|m_\hs(t)\|_{H^N}+\|m_\ks(t)\|_{H^N}\lesssim\varepsilon_0^2$. For the kinetic density, the factor with fewer derivatives has order at most $\lfloor N/2\rfloor\le N-2$, so the weighted $L^\infty$ bound above suffices. Applying Lemma \ref{lem:action_est} once more gives, for $k\ne j$,
\begin{equation*}
\|\Gamma_k(t)\|_{L^\infty}+\|\nabla_y\Gamma_k(t)\|_{H^{N+1-\alpha}}\lesssim\int_1^t s^{-\alpha}\left(\|m_j(s)\|_{L^1}+\|m_j(s)\|_{H^N}\right)\,ds\lesssim\varepsilon_0^2t^{1-\alpha}.
\end{equation*}
Choosing $\varepsilon_0\le c\varepsilon$ with $c$ sufficiently small improves both bootstrap assumptions. 

These estimates improve both bootstrap assumptions. The profile and action bounds control the kinetic regularity on every bounded time interval, while the field estimates and the differentiated Schrödinger equation propagate the $H^{N-1}_x$ regularity of $u$. The local continuation criterion therefore excludes a finite maximal lifespan and extends the solution and the auxiliary actions globally. Moreover, we have \eqref{eq:VH_profile_bounds} and \eqref{eq:VH_density_action_bounds} for all $t\geq 1$. 

Finally, the estimates \eqref{eq:VH_current_est} follow by the same arguments used to prove \eqref{eq:current_est} and \eqref{eq:kinetic_currents}.

\end{proof}

Using these estimates, we now prove the convergence of the profiles and the densities.
\begin{proposition}\label{prop:VH_convergence}
Under the assumptions on initial data in \eqref{eq:initial-Hartree_VH} and \eqref{eq:initial-VR}, there exist limiting profiles $h_{\hs,\infty}\in H^{N-1}_y$ and $h_{\ks,\infty}\in H^{N-1}_{y,\eta}\cap L^\infty_{y,\eta}$ such that
\begin{equation*}
\sum_{k\in\{\hs,\ks\}}\left(\|h_k(t)-h_{k,\infty}\|_{H^{N-1}}+\|h_k(t)-h_{k,\infty}\|_{L^\infty}\right)\lesssim\varepsilon_0t^{-\alpha}.
\end{equation*}
Define
\begin{equation}
    m_{\hs,\infty}:=|h_{\hs,\infty}|^2, \qquad m_{\ks,\infty}:=\int h_{\ks,\infty}^2\,d\eta.
\end{equation}
Then
\begin{equation*}
\|m_k(t)-m_{k,\infty}\|_{H^{N-1}\cap W^{N-1,1}}\lesssim\varepsilon_0^2t^{-\alpha}, \quad \|m_{k,\infty}\|_{H^N}\lesssim\varepsilon_0^2, \quad k\in \{\hs,\ks\}. 
\end{equation*}

\end{proposition}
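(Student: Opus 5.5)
We follow the two scalar convergence arguments, Proposition \ref{prop:hartree_h_convergence} for the wave and Proposition \ref{prop:convergence_h2} for the kinetic part, with the sources exchanged. Throughout, the global bounds of Lemma \ref{lem:VH_bounds} replace the scalar bootstrap bounds. Since Lemma \ref{lem:est_varphi} depends only on the norms of the source density, it gives $\|\varphi_{k,\hi}(t)\|_{H^N}\lesssim\varepsilon_0^2t^{-1}$ for both $k$. Sobolev embedding then yields $\|\nabla_y\varphi_{k,\hi}(t)\|_{L^\infty\cap H^{N-1}}\lesssim\varepsilon_0^2t^{-1}$. Lemma \ref{lem:VH_bounds} also supplies $\|\Gamma_k\|_{L^\infty}+\|\nabla_y\Gamma_k\|_{H^{N-1}}\lesssim\varepsilon_0^2t^{1-\alpha}$.

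\emph{Wave component.} We repeat the difference energy estimate for $w(\tau)=h_\hs(\tau)-h_\hs(t)$ in $H^{N-1}$. The source terms $t^{-2}\mathcal T_{\theta_\hs}h_\hs$ and $t^{-\alpha}\varphi_{\hs,\hi}h_\hs$ are $\mathcal O_{H^{N-1}}(\varepsilon_0^3t^{-1-\alpha})$ by the Leibniz rule and \eqref{eq:VH_profile_bounds}. The free term costs one derivative and is controlled by the uniform $H^N$ bound. This gives $\|h_\hs(\tau)-h_\hs(t)\|_{H^{N-1}}\lesssim\varepsilon_0t^{-1/2}+\varepsilon_0^3t^{-\alpha}\lesssim\varepsilon_0t^{-\alpha}$, using $\alpha\le 1/2$. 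Hence the limit $h_{\hs,\infty}$ exists in $H^{N-1}$, and by weak compactness it also lies in $H^N$. Since $N-1\ge 3>3/2$, Sobolev embedding $H^{N-1}\hookrightarrow L^\infty$ gives the $L^\infty$ rate.

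\emph{Kinetic component.} We bound $\partial_th_\ks=-\{h_\ks,\mathcal K_\ks\}$ in $H^{N-1}$ and in $L^\infty$, as in Proposition \ref{prop:convergence_h2}. Only the bracket estimate needs care. The $H^{N-1}$ estimate involves at most $N$ derivatives of $h_\ks$ with one $\eta$-weight in $L^2$, which \eqref{eq:VH_profile_bounds} supplies. The $L^\infty$ estimate involves only first derivatives with the weight $\langle\eta\rangle$, which is available because $N-2\ge1$. The reduced $L^\infty$ index $N-2$, needed in Lemma \ref{lem:VH_bounds}, therefore causes no loss here. We obtain $\|\partial_th_\ks\|_{H^{N-1}\cap L^\infty}\lesssim\varepsilon_0t^{-2}+\varepsilon_0^3t^{-1-\alpha}$. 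Integrating from $t$ to $\infty$ gives the limit $h_{\ks,\infty}$ and the rate $\varepsilon_0t^{-\alpha}$.

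\emph{Densities.} For $m_\hs$ we use the algebra property of $H^{N-1}$ and Cauchy--Schwarz, exactly as at the end of Proposition \ref{prop:hartree_h_convergence}. For $m_\ks$ we use Leibniz in $y$, Cauchy--Schwarz in $\eta$, and $H^2_y(L^2_\eta)\hookrightarrow L^\infty_yL^2_\eta$, with $\lfloor (N-1)/2\rfloor+2\le N-1$ for $N\ge4$. In both cases the $W^{N-1,1}$ bound follows from Cauchy--Schwarz in all variables. The bounds $\|m_{k,\infty}\|_{H^N}\lesssim\varepsilon_0^2$ follow from the uniform $H^N$ bounds \eqref{eq:VH_density_action_bounds} and weak lower semicontinuity.

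The step needing most attention is the kinetic bracket in $H^{N-1}$. There, derivatives falling on $\Gamma_\ks\cdot\eta$ and on $\varphi_{\ks,\hi}$ must land in norms that Lemma \ref{lem:VH_bounds} actually provides. The plan is to place the top derivatives of the coefficients in $L^2_y$ against $L^\infty_yL^2_\eta$ norms of $h_\ks$. Those $L^\infty_yL^2_\eta$ norms come from the weighted $L^2$ bounds with $N$ derivatives via Sobolev embedding in $y$, which avoids needing the weighted $L^\infty$ bound at high order.
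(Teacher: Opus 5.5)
Your proposal is correct and takes essentially the same route as the paper. For $h_\hs$ you rerun the difference energy estimate of Proposition \ref{prop:hartree_h_convergence}; for $h_\ks$ you integrate the bound on $\partial_t h_\ks=-\{h_\ks,\mathcal K_\ks\}$ in $H^{N-1}\cap L^\infty$ from Proposition \ref{prop:convergence_h2}, noting, as the paper does, that the reduced weighted $L^\infty$ range $N-2$ suffices because only first derivatives enter. Your handling of the top-order coefficient terms by $y$-Sobolev products with values in $L^2_\eta$ also matches the paper, as do the density estimates and the weak lower semicontinuity step.
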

\begin{proof}
    The convergence arguments of Propositions \ref{prop:hartree_h_convergence} and \ref{prop:convergence_h2} use only the profile bounds and the field and action estimates already established. In particular, the wave equation and the kinetic Hamiltonian equation give
\begin{equation*}
\begin{split}
\sup_{\tau\geq t}\|h_\hs(\tau)-h_\hs(t)\|_{H^{N-1}}&\les \varepsilon_0 t^{-1/2}+\varepsilon_0^3 t^{-\alpha} ,  \quad \tau\geq t\geq 1 \\ 
\|\partial_t h_\ks(t)\|_{H^{N-1}}+\|\partial_t h_\ks(t)\|_{L^\infty}&\lesssim\varepsilon_0t^{-2}+\varepsilon_0^3t^{-1-\alpha}.
\end{split}
\end{equation*}
Since $\alpha\leq \frac{1}{2}$, the first estimate gives convergence in $H^{N-1}$, and Sobolev embedding gives convergence in $L^\infty$ for the wave profile. Integrating the second estimate proves the kinetic convergence. The reduced $L^\infty$ derivative range in \eqref{eq:VH_profile_bounds} suffices, since only first derivatives are needed for the $L^\infty$ estimate. 

The density estimates follow from the same quadratic product estimates as in Propositions \ref{prop:hartree_h_convergence} and \ref{prop:convergence_h2}. Finally, weak lower semi-continuity gives $$\|m_{k,\infty}\|_{H^N}\le\liminf_{t\to\infty}\|m_k(t)\|_{H^N}\lesssim\varepsilon_0^2.$$
 This finishes the proof.
\end{proof}

\subsection{Coupled asymptotic expansions}
We now  apply the recursive argument of Subsection \ref{subsec:expansion} with exchanged sources. Lemma \ref{lem:VH_bounds} and Proposition \ref{prop:VH_convergence} verify the bounds \eqref{assumption:convergence}--\eqref{assumption:current} for both components. Indeed, the required $W^{L,1}$ density bounds follow from the profile bounds and Cauchy-Schwarz, while \eqref{eq:VH_current_est} and $N-1\geq L+2$ give the $W^{L,1}\cap W^{L,\infty}$ current bounds. 

The coefficient construction differs from the scalar case only in the source index. For $k,j\in\{\hs,\ks\}$ with $k\ne j$, set
\begin{equation*}
m_{k,0}:=m_{k,\infty},\qquad \theta_{k,0}:=V_\alpha*m_{j,0},\qquad \Gamma_{k,0}:=\frac{\nabla_y\theta_{k,0}}{1-\alpha}.
\end{equation*}
For $1\le n\le L$, first determine the two density coefficients and then the two action coefficients by
\begin{equation}\label{eq:VH_recursion}
\begin{split}
    m_{k,n}&:=-\frac1{n\alpha}\nabla_y\cdot\sum_{a+b=n-1}m_{k,a}\Gamma_{k,b},\\ \theta_{k,n}&:=V_\alpha*m_{j,n}+\frac12\sum_{a+b=n-1}\Gamma_{k,a}\cdot\Gamma_{k,b}.
\end{split}
\end{equation}
For $n<L$, define 
\[
\Gamma_{k,n}:=\frac{\nabla_y\theta_{k,n}}{1-(n+1)\alpha}.
\]
This recursion is explicit: the density coefficients at order $n$ involve only coefficients of lower order, and the action coefficients at order $n$ use the density coefficients just determined. Define $\Theta_k$ by \eqref{eq:action_asymptotics} with these coefficients.

We now repeat the proof of Proposition \ref{prop:expansion} simultaneously for the two components. The density remainder equation for component $k$ is unchanged. In the action remainder equation, the convolved density remainder is that of component $j\ne k$. Thus, at each order, we first prove both density estimates and then both action estimates. Taking the maximum over the two components gives the same induction bounds and the same time weights as in Subsection \ref{subsec:expansion}. In particular,
\begin{equation*}
\sum_{k\in\{\hs,\ks\}}\|\partial_t(\theta_k-\Theta_k)(t)\|_{W^{1,\infty}}\lesssim\varepsilon_0^2t^{-1-\alpha}+\varepsilon_0^4 t^{-1-\alpha}(1+\Lambda_{(L+1)\alpha}(t)).
\end{equation*}
Since $(L+2)\alpha>1$, there exist real-valued functions $\theta_{k,\mathrm{final}}\in W^{1,\infty}$ with $\|\theta_{k,\mathrm{final}}\|_{W^{1,\infty}}\lesssim \varepsilon_0^2$ such that
\begin{equation}\label{eq:VH_action_convergence}
\sum_{k\in\{\hs,\ks\}}\|\theta_k(t)-\Theta_k(t)-\theta_{k,\mathrm{final}}\|_{W^{1,\infty}}\lesssim\varepsilon_0^2\mathfrak r_\alpha(t).
\end{equation}

\medskip

We now combine the profile convergence in Proposition \ref{prop:VH_convergence} with \eqref{eq:VH_action_convergence}, as in Subsections \ref{subsec:hartree-scatter} and \ref{subsec:VR-scatter}. Define the scattering profiles by 
\[
h_\infty(y):=e^{-i\theta_{\hs,\mathrm{final}}(y)}h_{\hs,\infty}(y), \qquad f_\infty(y,\eta):=h_{\ks,\infty}(y,-\eta+\nabla_y\theta_{\ks,\mathrm{final}}(y)).
\]
These profiles belong to $H^1_y\cap L^\infty_y$ and $L^2_{y,\eta}\cap L^\infty_{y,\eta}$, respectively, with norms $\lesssim \varepsilon_0$. These transformations preserve the limiting densities: $|h_\infty|^2=m_{\hs,\infty}$ and $\int f_\infty^2\,d\eta=m_{\ks,\infty}$. They therefore leave all coefficients of the two asymptotic actions unchanged. The reconstruction formulas are
\begin{equation*}u(t,x)=t^{-3/2}e^{i|x|^2/(2t)}e^{-i\theta_\hs(t,x/t)}h_\hs(t,x/t),\qquad f(t,x,v)=h_\ks\left(t,\frac xt,tv-x+\Gamma_\ks\left(t,\frac xt\right)\right).\end{equation*}
For $2\le p\le\infty$, the same argument as in Subsection \ref{subsec:hartree-scatter} gives
\begin{equation*}
\begin{split}
&t^{3/2-3/p}\left\|u(t,x)-t^{-3/2}e^{i|x|^2/(2t)}e^{-i\Theta_\hs(t,x/t)}h_\infty(x/t)\right\|_{L^p_x}\\
&\quad\lesssim\|h_\hs(t)-h_{\hs,\infty}\|_{L^p}+\|\theta_\hs(t)-\Theta_\hs(t)-\theta_{\hs,\mathrm{final}}\|_{L^\infty}\|h_{\hs,\infty}\|_{L^p}\\[.1em]
&\quad \lesssim\varepsilon_0\mathfrak r_\alpha(t).
\end{split}
\end{equation*}
For the kinetic component, the change of variables $(x,v)\mapsto(x/t,tv-x)$ preserves phase-space volume. Using the fundamental theorem of calculus in $\eta$, as in Subsection \ref{subsec:VR-scatter}, we obtain
\begin{equation*}
\begin{split}
&\left\|f(t,x,v)-f_\infty\left(\frac xt,x-tv-\nabla_y\Theta_\ks\left(t,\frac xt\right)\right)\right\|_{L^p_{x,v}}\\
&\quad\lesssim\|h_\ks(t)-h_{\ks,\infty}\|_{L^p}+\|\nabla_y\theta_\ks(t)-\nabla_y\Theta_\ks(t)-\nabla_y\theta_{\ks,\mathrm{final}}\|_{L^\infty}\|\nabla_\eta h_\ks(t)\|_{L^p}\\
&\quad \lesssim\varepsilon_0\mathfrak r_\alpha(t).
\end{split}
\end{equation*}
Hence, we finish the proof Theorem \ref{thm:VH}.


\end{document}